\author{Baptiste Serraille, Ibrahim Trifa}
\newtheorem{thm}{Theorem}
\newtheorem{prop}{Proposition}[section]
\newtheorem*{prop*}{Proposition}
\newtheorem{lemma}[prop]{Lemma}
\newtheorem{lemmar}{Lemma}
\newtheorem{rk}[prop]{Remark}
\newtheorem{dfn}[prop]{Definition}
\newtheorem{coro}{Corollary}
\newtheorem{ques}{Question}
\newcommand{\Ham}{\text{Ham}}
\newcommand{\Aut}{\text{Aut}}
\newcommand{\Ker}{\text{Ker}}
\newcommand{\R}{\mathbb{R}}
\newcommand{\N}{\mathbb{N}}
\newcommand{\C}{\mathbb{C}}
\newcommand{\Z}{\mathbb{Z}}
\newcommand{\Sym}{\operatorname{Sym}}
\newcommand{\osc}{\operatorname{osc}}
\newcommand{\supp}{\operatorname{supp}}
\newcommand\restr[2]{{
		\left.\kern-\nulldelimiterspace 
		#1 
		\vphantom{|} 
		\right|_{#2} 
}}
\begin{document}

\title{On link quasimorphisms on the sphere and the equator conjecture}

\maketitle

\begin{abstract}
	Link spectral invariants were introduced by Cristofaro-Gardiner, Humilière, Mak, Seyfaddini, and Smith. They induce Hofer-Lipschitz quasimorphisms on the group of Hamiltonian diffeomorphisms of the two-dimensional sphere. We prove that some linear combinations of those quasimorphisms vanish on the stabiliser of the equator. As a consequence, at least one of the following statements holds:  there are non-trivial linear relations between the link quasimorphisms, or the space of equators of the sphere has infinite Hofer diameter.
    The proof relies on an `almost' Künneth formula in Link Floer Homology for some specific type of connected sums.
\end{abstract}

\tableofcontents

\section{Introduction}

\subsection{Link spectral invariants}

Let $(S^2,\omega)$ be the two-sphere equipped with a symplectic form, normalised so that its total area is $1$. A Lagrangian link inside the sphere $S^2$ is a finite, disjoint union of embedded circles $\underline L=L_1\cup\cdots\cup L_k\subset S^2$. The link is called $\eta$-monotone, for a non-negative real number $\eta$, if there exists a constant $\lambda>0$ such that for every connected component $B$ of $S^2\setminus \underline L$, 
\[\omega(B)+2\eta (\tau(B)-1)=\lambda,\]
where $\omega(B)$ is the symplectic area of $B$ and $\tau(B)$ its number of boundary components. $\lambda$ is called the monotonicity constant of $\underline L$, and $\eta$ the bulk parameter.

In \cite{CGHMSS22}, given such a link $\underline L$, Cristofaro-Gardiner, Humili\`ere, Mak, Seyfaddini and Smith defined a homogenised link spectral invariant $\mu_{\underline L}$, which is a map from the group $\Ham(S^2)$ of Hamiltonian diffeomorphisms of the sphere to the real line. This invariant comes from some quantitative flavour of Heegaard-Floer homology, sometimes also referred to as Link Floer homology. Link spectral invariants were first introduced to study the algebraic structure of the group of Hamiltonian homeomorphisms of a symplectic surface. They were used to prove the non-simplicity of this group for any symplectic surface \cite{CGHMSS22}, then to find new normal subgroups in \cite{CGHMSSsubleading} and \cite{MakT}.

It is showed in \cite{CGHMSS22} that the link spectral invariants are quasimorphisms, i.e. they satisfy
\[\exists D>0, \forall \varphi, \psi \in \Ham(S^2), \vert \mu_{\underline L}(\varphi \psi)-\mu_{\underline L}(\varphi)-\mu_{\underline L}(\psi)\vert \leq D.\]

This follows from work of Entov and Polterovich \cite{EP}, together with the fact that the quantum cohomology of $\C P^k$ is a field.

They also observe that the homogenised spectral invariant $\mu_{\underline L}$ only depends on $k$ (the number of components of $\underline L$) and the monotonicity constant $\lambda$. Then, for $k\geq 2$ and $\lambda\in \left[1/(k+1),1/2\right)$, one can define $\mu_{k, \lambda}$ as $\mu_{\underline L}$, where $\underline L$ is a link comprised of $k$ parallel circles in $S^2$, separating the sphere in two discs of area $\lambda$ and $k-1$ annuli of area $\frac {1-2\lambda}{k-1}$. Indeed, one can check that such a link is $\eta$-monotone (with $\eta = \frac {(k+1)\lambda-1}{2(k-1)}$), with monotonicity constant $\lambda$.

Given a family of monotone links $(\underline L_i)$ on $S^2$, with possibly distinct number of components and monotonicity constants, one can ask whether the corresponding quasimorphisms $\mu_{L_i}$ are linearly independent. If each $\underline L_i$ has a component that is disjoint from every other $\underline L_j$, then the answer is yes by the Lagrangian control property of link spectral invariants. However, the answer is unclear when the Lagrangian links overlap.

For instance, for $\lambda\in [1/3,1/2)$, consider a Lagrangian link $\underline L=L_0 \cup L_1 \cup L_2$, where $L_0$ is an equator, and $L_1$ and $L_2$ each bound a disc of area $\lambda$ in a hemisphere.
This link is monotone, and defines a quasimorphism $\mu_{3,\lambda}$. The link $L_1\cup L_2$ is also monotone, and defines a quasimorphism $\mu_{2,\lambda}$. Finally, $L_0$ is a monotone Lagrangian that we can see as a monotone link with one component, therefore it also defines a quasimorphism $\mu_{1,1/2}$.

Then, are $\mu_{3,\lambda}$, $\mu_{2,\lambda}$ and $\mu_{1,1/2}$ linearly independent?

In \cite{BFPS}, Buhovsky et al. defined a quasimorphism given by \[r_\lambda\coloneqq 3 \mu_{3,\lambda}-2\mu_{2,\lambda}-\mu_{1,1/2}.\]
They showed that it vanishes on autonomous Hamiltonian diffeomorphisms, and they wondered whether this quasimorphism vanishes identically \cite[Question 4.2]{BFPS}.

We give the beginning of an answer: let $\mathcal{S}(L_0)\subset \Ham(S^2)$ denote the stabiliser of the equator $L_0 \subset S^2$, i.e. the group of Hamiltonian diffeomorphisms of the sphere fixing $L_0$ as a set. We claim that the quasimorphism $r_\lambda$ vanishes on this stabiliser.

\begin{thm}\label{thm.main thm}
For every real number $\lambda \in [1/3,1/2)$, the homogeneous quasimorphism $r_\lambda$ given by
\[r_\lambda\coloneqq 3 \mu_{3,\lambda}-2\mu_{2,\lambda}-\mu_{1,1/2}\]
vanishes on $\mathcal{S}(L_0)$.
\end{thm}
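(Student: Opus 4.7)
The plan is to reduce the problem to a statement about Link Floer homology for the two hemispheres cut out by the equator $L_0$, and to observe that the linear combination $r_\lambda$ is engineered exactly so that the hemisphere contributions cancel.

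\textbf{Step 1: reduction to a product decomposition.} Let $H_+$ and $H_-$ denote the two closed hemispheres bounded by $L_0$. Given $\varphi \in \mathcal{S}(L_0)$, since $\varphi$ lies in the identity component of $\Ham(S^2)$ and preserves $L_0$ setwise, it preserves each hemisphere. A fragmentation argument in a thin tubular neighborhood of $L_0$ lets one approximate $\varphi$ in Hofer norm by a product $\varphi_+ \varphi_-$ with $\varphi_\pm$ supported in the open hemisphere $H_\pm^\circ$. Because each $\mu_{k,\lambda}$ is Hofer-Lipschitz and homogeneous, it is then enough to prove $r_\lambda(\varphi_+ \varphi_-) = 0$ for any such factorisation.

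\textbf{Step 2: almost Künneth for each link.} The three quasimorphisms appearing in $r_\lambda$ come from the links $L_0 \cup L_1 \cup L_2$, $L_1 \cup L_2$, and $L_0$. For each, I would establish an identity of the form
\[
\mu_{\underline L}(\varphi_+ \varphi_-) = \nu_+^{\underline L}(\varphi_+) + \nu_-^{\underline L}(\varphi_-),
\]
where $\nu_\pm^{\underline L}$ is a spectral invariant extracted from the Link Floer complex of the pair $(H_\pm, \underline L \cap H_\pm)$, viewed relative to the boundary $L_0$. For $\underline L = L_1 \cup L_2$, whose two components already sit in disjoint hemispheres, this follows from the Lagrangian control property of link spectral invariants. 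For $\underline L = L_0 \cup L_1 \cup L_2$ and $\underline L = L_0$, which both contain the shared circle $L_0$, this is the ``almost'' Künneth formula for connected sums along $L_0$ advertised in the abstract: one matches the total Link Floer complex with the tensor product of the two relative complexes, the error terms being controlled by the distance from $\supp(\varphi_\pm)$ to $L_0$ and absorbed after homogenization.

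\textbf{Step 3: cancellation.} Once the three Künneth identities are in hand, vanishing of $r_\lambda(\varphi_+ \varphi_-)$ reduces, on each hemisphere, to the linear identity
\[
3\nu_\pm^{(3)}(\varphi_\pm) - 2\nu_\pm^{(2)}(\varphi_\pm) - \nu_\pm^{(1)}(\varphi_\pm) = 0
\]
among three natural relative spectral invariants on the disk $H_\pm$ (carrying the single interior circle $L_1$ or $L_2$ and the boundary circle $L_0$). The coefficients $3,-2,-1$ are tuned precisely so that this combination vanishes identically on the disk; this should be a relative analogue, inside a single hemisphere, of the computation of \cite{BFPS} that singled out $r_\lambda$ as the natural combination annihilating autonomous Hamiltonians on $S^2$.

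\textbf{Main obstacle.} The technical core is Step 2, the almost Künneth formula. Because $L_0$ is shared by the two halves, this is not a genuine Künneth situation, and the challenge is to analyse the $J$-holomorphic polygons crossing $L_0$ finely enough to extract an \emph{equality} of homogenized spectral invariants rather than merely an inequality up to a constant. This is where the specific geometry of connected sum along a circle inside $S^2$ is essential, and where the main work of the proof will lie.
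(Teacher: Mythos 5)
Your Step 1 matches the paper's reduction (the paper additionally composes with a half-turn $R$ to handle the case where $\varphi$ reverses the orientation of $L_0$, but that is a minor point). The real divergence, and the real problem, is in Steps 2 and 3.

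First, your decomposition is along $L_0$ itself, which is a component of two of the three links. The paper deliberately avoids this: it splits the sphere along a circle lying in the annulus \emph{between} $L_0$ and $L_2$, hence disjoint from all link components, and caps the two pieces off into \emph{closed} spheres $\mathscr{S}_2$ (of area $1/2+\lambda$, carrying $L_0'\cup L_1'$) and $\mathscr{S}_1$ (of area $2\lambda$, carrying the equator $L_2'$). The areas are chosen so that all three links have the same monotonicity constant $\lambda$, which is what makes the Novikov actions on the three complexes compatible and the filtered comparison possible. Your ``relative Link Floer complex of a disk with boundary $L_0$'' is not a defined object, and neck-stretching along a circle that is itself a Lagrangian boundary condition is a far harder analytical problem than stretching along a circle in the complement of the links; nothing in your sketch addresses it. Relatedly, your claim that the splitting of $\mu_{2,\lambda}$ ``follows from Lagrangian control'' is incorrect: Lagrangian control only computes $c_{\underline L}(H)$ when $H$ is constant on each link component, which $\varphi_\pm$ need not satisfy. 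What is actually true (and is the paper's Proposition on $c_2$) is a filtered chain isomorphism between $CF_*(L_1\cup L_2,H_\varepsilon)$ on $S^2(1)$ and $CF_*(L_0'\cup L_1',H_\varepsilon'')$ on $S^2(1/2+\lambda)$ — an identification across two different spheres, not a Künneth splitting into hemisphere contributions.

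Second, Step 3 is where all the content of the theorem lives, and it is asserted rather than proved: you posit an exact identity $3\nu^{(3)}_\pm-2\nu^{(2)}_\pm-\nu^{(1)}_\pm=0$ among three undefined relative invariants, with no argument beyond ``the coefficients are tuned for this.'' There is also a structural obstacle to getting exact identities this way: Künneth-type arguments for spectral invariants only give you that the image of a product of fundamental-class cycles is \emph{a} cycle representing the fundamental class, hence an \emph{inequality} $3c_3(H_\varepsilon)\le 2c_{L_0'\cup L_1'}(H_\varepsilon'')+c_{L_2'}(H_\varepsilon')$ (the paper explicitly notes its comparison diagram is not even quasi-commutative). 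The paper obtains the vanishing only after combining this one-sided inequality with the exact identification of $c_2$, letting $\varepsilon\to 0$, and then running the same inequality for $\overline H$ to get the reverse sign after homogenisation. Your outline needs, at minimum, (a) a precise definition of the relative invariants on a disk with Lagrangian boundary, (b) a proof of the Künneth comparison across the shared boundary circle $L_0$, and (c) a proof of the cancellation identity in Step 3; as written, none of these is supplied, and (b) and (c) are not plausible in the exact form you state them.
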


\subsection{Lagrangian Hofer distance and the equator conjecture}

An other important feature of link spectral invariants is that they are Hofer-Lipschitz. Therefore, they turn out to be useful to study Hofer geometry. In \cite{M} and \cite{MoT}, they are used to give a lower bound to the Hofer energy of a Hamiltonian diffeomorphism fixing a link on a surface with boundary.

On the sphere, we end up with Hofer-Lipschitz quasimorphisms on $\Ham(S^2)$. As Khanevsky first pointed out in \cite{K09}, such quasimorphisms are a powerful tool when trying to prove that the space of Hamiltonian isotopies of a given Lagrangian $L_0$ has infinite Hofer diameter. Indeed, he showed that the unboundedness of the Lagrangian Hofer distance is implied by the existence of a homogeneous, non-trivial, Hofer-Lipschitz quasimorphism that vanishes on the stabiliser $\mathcal S (L_0)$. This strategy has been used to prove the unboundedness of this distance for several Lagrangians $L_0$ inside various symplectic manifolds, as in \cite{K09}, \cite{K11}, \cite{S}, \cite{T}, \cite{dawid} and \cite{zapolsky}.

However, it remains unknown whether the space of equators inside the sphere has infinite diameter. This appears as an open question in \cite{MS}, and the affirmative answer is often referred to as the `equator conjecture'.

Together with Khanevsky's argument, Theorem \ref{thm.main thm} has the following consequence:

\begin{coro}
\label{coro:alternative}
    At least one of the following statement holds:
    \begin{enumerate}[label=(\roman*)]
        \item For every real number $\lambda \in [1/3,1/2)$, the homogeneous quasimorphism $r_\lambda$ given by
    \[r_\lambda\coloneqq 3 \mu_{3,\lambda}-2\mu_{2,\lambda}-\mu_{1,1/2}\]
    vanishes identically.
        \item The equator conjecture holds.
    \end{enumerate}
\end{coro}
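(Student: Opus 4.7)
The proposal is to deduce the corollary as a direct dichotomy argument, combining Theorem \ref{thm.main thm} with Khanevsky's criterion. The plan has essentially no non-trivial content beyond invoking the right pieces in the right order.

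First, I would record that for each $\lambda \in [1/3,1/2)$, the map $r_\lambda = 3\mu_{3,\lambda} - 2\mu_{2,\lambda} - \mu_{1,1/2}$ is a homogeneous Hofer-Lipschitz quasimorphism on $\Ham(S^2)$: homogeneity and the quasimorphism property are preserved under linear combinations, and each $\mu_{k,\lambda}$ is a Hofer-Lipschitz quasimorphism by the work of Cristofaro-Gardiner--Humili\`ere--Mak--Seyfaddini--Smith recalled in the introduction. By Theorem \ref{thm.main thm}, $r_\lambda$ vanishes on the stabiliser $\mathcal{S}(L_0)$ of the equator.

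Next, I would split on whether statement (i) holds. If $r_\lambda \equiv 0$ for every $\lambda \in [1/3, 1/2)$, there is nothing to prove. Otherwise, there exists some $\lambda_0 \in [1/3,1/2)$ for which $r_{\lambda_0}$ is not identically zero. Then $r_{\lambda_0}$ is a non-trivial, homogeneous, Hofer-Lipschitz quasimorphism on $\Ham(S^2)$ that vanishes on $\mathcal{S}(L_0)$. Khanevsky's argument from \cite{K09} applies verbatim: for any $\varphi \in \Ham(S^2)$ with $r_{\lambda_0}(\varphi) \neq 0$, the iterates $\varphi^n(L_0)$ satisfy, via the triangle inequality for the Lagrangian Hofer distance and the Hofer-Lipschitz bound, an inequality of the form
\[
d_H\bigl(L_0, \varphi^n(L_0)\bigr) \geq \frac{1}{C}\bigl|r_{\lambda_0}(\varphi^n)\bigr| = \frac{n}{C}\bigl|r_{\lambda_0}(\varphi)\bigr|,
\]
using homogeneity of $r_{\lambda_0}$ and the fact that $r_{\lambda_0}$ descends to a function on the Lagrangian orbit because it vanishes on the stabiliser. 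This forces the orbit of $L_0$ under $\Ham(S^2)$, hence the space of equators, to have infinite Hofer diameter, yielding (ii).

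There is no real obstacle here; the entire weight of the argument has been absorbed into Theorem \ref{thm.main thm} and into the Hofer-Lipschitz property of the $\mu_{k,\lambda}$. The only point I would take minor care with is writing Khanevsky's inequality cleanly, so that the reader sees precisely why \emph{non-triviality} of $r_{\lambda_0}$ (and not some stronger unboundedness) is what is needed to extract the infinite diameter conclusion.
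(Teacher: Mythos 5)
Your proposal is correct and is essentially the paper's own argument: the corollary is an immediate consequence of Theorem \ref{thm.main thm} combined with Proposition \ref{prop:Khanevsky} applied to a non-vanishing $r_{\lambda_0}$. The only cosmetic imprecision is your displayed inequality, which should carry the defect correction $\frac{1}{C}\bigl(n|r_{\lambda_0}(\varphi)|-D\bigr)$ since $r_{\lambda_0}$ is only a quasimorphism on the stabiliser coset; this does not affect the linear-growth conclusion.
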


In particular, we recover the result of Buhovsky et al. that the quasimorphism $r_\lambda$ vanishes on autonomous Hamiltonian diffeomorphisms as a corollary of Theorem \ref{thm.main thm}.

\begin{coro}[\cite{BFPS}]\label{coro.vanish on aut}
For all $\lambda \in [1/3,1/2)$, the quasimorphism $r_\lambda$ vanishes on autonomous Hamiltonian diffeomorphisms.
\end{coro}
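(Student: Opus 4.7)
The plan is to deduce the corollary from Theorem \ref{thm.main thm} by combining conjugation-invariance with the additivity of homogeneous quasimorphisms on commuting pairs. Since $r_\lambda$ is homogeneous it is a class function on $\Ham(S^2)$, so Theorem \ref{thm.main thm} in fact gives that $r_\lambda$ vanishes on $\mathcal{S}(L)$ for \emph{every} equator $L\subset S^2$ (any two equators being Hamiltonian-isotopic). Moreover, applying the defect inequality to $(\varphi\psi)^n=\varphi^n\psi^n$ and letting $n\to\infty$ yields the exact additivity $r_\lambda(\varphi\psi)=r_\lambda(\varphi)+r_\lambda(\psi)$ whenever $\varphi$ and $\psi$ commute. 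Given an autonomous $\varphi=\varphi_H^1$, it then suffices to write it as a pairwise commuting product of factors on which $r_\lambda$ either vanishes exactly or can be made arbitrarily Hofer-small.

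Using $C^\infty$-density of Morse functions and Hofer-Lipschitz continuity of $r_\lambda$, I would first reduce to the case that $H$ is Morse with distinct critical values. Fix a smooth partition of unity $\sum_k\phi_k\equiv 1$ on the range of $H$ subordinate to short intervals $J_1,\dots,J_N$ with each $J_k$ containing at most one critical value. Set $F_k(t)=\int_0^t\phi_k$ and $H_k=F_k\circ H$: then $\sum H_k=H$, all $H_k$ are functions of $H$ hence pairwise Poisson-commute, and $\osc(H_k)\leq|J_k|$. I would further split each $H_k$ as $\sum_\alpha \psi_k^\alpha H_k$ along the connected components of $\{H\in J_k\}$, with $\psi_k^\alpha$ a bump on the $\alpha$-th component: each $\psi_k^\alpha$ is constant on the level circles of $H$ meeting its support, so $\{\psi_k^\alpha,H\}=0$, from which one checks that the pieces $H_k^\alpha=\psi_k^\alpha H_k$ still pairwise Poisson-commute. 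Their time-$1$ maps $\varphi_{H_k^\alpha}^1$ therefore form a commuting product decomposition of $\varphi_H^1$, and by additivity
\[r_\lambda(\varphi_H^1)=\sum_{k,\alpha} r_\lambda(\varphi_{H_k^\alpha}^1).\]

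Each summand is then treated geometrically. If $J_k$ consists of regular values, $\supp(H_k^\alpha)$ is a thin annulus around a single level circle $C$: if $C$ is already an equator then $\varphi_{H_k^\alpha}^1\in\mathcal{S}(C)$; otherwise, for $|J_k|$ small, the larger complementary disc of $C$ still has area $>1/2$ and hence contains an equator $L$ disjoint from the annulus, on which $\varphi_{H_k^\alpha}^1$ is the identity. In both sub-cases the summand vanishes. Extremum pieces are handled analogously, their support being a small topological disc disjoint from some equator. The delicate case is when $J_k$ contains a saddle value: the support is then a neighbourhood of a figure-eight level set whose three complementary regions may all have area $<1/2$, so no equator need be disjoint from the support. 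This is the main obstacle; to handle it I fall back on the Hofer-Lipschitz bound $|r_\lambda(\varphi_{H_k^\alpha}^1)|\leq K\osc(H_k^\alpha)\leq K|J_k|$. Since the number of saddle critical values of $H$ is a fixed finite $n_{\mathrm{sad}}$, one gets $|r_\lambda(\varphi_H^1)|\leq Kn_{\mathrm{sad}}\max_k|J_k|$, and refining the partition forces the right-hand side to zero.
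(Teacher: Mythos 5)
Your argument is correct in outline but follows a genuinely different route from the paper. The paper's proof is a two-line deduction: it cites the external fact (from \cite{HumHDR}) that $d(L_0,\varphi^n(L_0))$ stays bounded for autonomous $\varphi$, and then runs the contrapositive of Proposition \ref{prop:Khanevsky}. You instead give a direct, essentially self-contained argument: conjugation-invariance upgrades Theorem \ref{thm.main thm} to vanishing on $\mathcal S(L)$ for \emph{every} equator $L$, additivity of homogeneous quasimorphisms on commuting elements turns a Reeb-type decomposition of an autonomous flow into a sum, and each piece is killed either by finding an equator it stabilises or by the Hofer--Lipschitz bound. This is close in spirit to the original argument of \cite{BFPS}, except that where they invoke Lagrangian control you invoke Theorem \ref{thm.main thm}; what your version buys is independence from \cite{HumHDR}, at the cost of being much longer.

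Two steps need repair, though both are fixable within your framework. First, the formula $H_k^\alpha=\psi_k^\alpha H_k$ does not quite work: $H_k=F_k\circ H$ is \emph{not} supported in $\{H\in J_k\}$ but is locally constant outside it, equal to $0$ below $J_k$ and to $c_k=\int\phi_k>0$ above. On the outer collar of $U_k^\alpha$, where $d\psi_k^\alpha\neq 0$ and $H_k\equiv c_k$, the vector field $X_{\psi_k^\alpha H_k}=c_kX_{\psi_k^\alpha}$ is nonzero, so the $\varphi^1_{H_k^\alpha}$ do not compose to $\varphi^1_{H_k}$. The clean fix is to define $H_k^\alpha$ as the function equal to $F_k\circ H$ on $U_k^\alpha$ and \emph{locally constant} on the complement; this is well defined precisely because the Reeb graph of a Morse function on $S^2$ is a tree, so each complementary component attaches to $U_k^\alpha$ along a single level circle. (Alternatively, the spurious collar terms are themselves autonomous flows preserving regular level circles and can be absorbed as extra commuting factors handled by your regular case.) Second, in the regular-annulus case your claim that ``for $|J_k|$ small the larger complementary disc still has area $>1/2$'' is not uniform: as the partition is refined, annuli straddling an equator level have both complementary discs of area arbitrarily close to $1/2$. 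But exactly in that case the closed annulus contains, by the intermediate value theorem applied to the areas of the sub-discs bounded by the level circles, a level circle that \emph{is} an equator; since each piece is the flow of a function of $H$ it preserves every level circle, hence lies in the stabiliser of that equator. With this dichotomy (either some level circle in the support is an equator, or one complementary disc has area $>1/2$) the regular and extremum cases need no smallness assumption at all, and only the finitely many saddle pieces contribute, each bounded by $K|J_k|$, so refining the partition concludes as you say.
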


\begin{proof}
From \cite{HumHDR}, it is known that for all autonomous Hamiltonian diffeomorphisms $\varphi$, the sequence $d_H(L_0,\varphi^n(L_0))$ is bounded. Since $r_\lambda$ is Hofer-Lipschitz, we can proceed as in Proposition \ref{prop:Khanevsky}.
\end{proof}

\subsection{Structure of the paper}
In Section \ref{sec:preliminaries}, we fix some notations and recall some definitions and properties about the Hofer distance and quasimorphisms. Then in Section \ref{sec:Heegaard Floer}, we recall the definition of Heegaard-Floer homology following \cite{CGHMSS22}, and give more details about the holomorphic curves involved in this construction. We move on to the proof of Theorem \ref{thm.main thm} in Section \ref{sec.proof of main}, leaving out two technical lemmas that we prove in Section \ref{sec.neck-stretching and constrains}. Finally, we discuss some consequences of our result in Section \ref{sec:consequences}.

\paragraph{Acknowledgements}The authors thank their advisor Sobhan Seyfaddini, as well as Julio Sampietro Christ, Patricia Dietzsch and Oliver Edtmair for the interesting discussions about this project. I.T. would also like to thank Cheuk Yu Mak for answering some of our questions on the computation of Maslov indices of curves. B.S. would like to thank Amanda Hirschi for her support throughout this project. Both authors were partially supported by the ERC Starting Grant number 851701.

\section{Preliminaries}

In this section, we recall a number of notations and definitions. At first we recall the Hofer norm and the role that it plays in symplectic topology, then we define quasimorphisms and show how the existence of non-trivial quasimorphisms satisfying some properties has consequences for the Hofer geometry of the space of Hamiltonian isotopies of a Lagrangian submanifold.

\subsection{Notations, Hofer distance}
\label{sec:preliminaries}

Let $(M,\omega)$ be a symplectic manifold. A \textit{Hamiltonian function} on $M$ is a smooth map $H:S^1\times M \to \R$, $(t,x)\mapsto H_t(x)$. A Hamiltonian uniquely defines a time-dependent \textit{Hamiltonian vector field} $X_{H_t}$ via the formula 
\[\omega(X_{H},\cdot)=-dH.\]
The \textit{Hamiltonian diffeomorphism} generated by $H$, denoted $\varphi_H$, is defined as the time-1 map of the flow of $X_{H}$, which we denote by $\Phi^t_{H}$.

The set of Hamiltonian diffeomorphisms of $(M,\omega)$ will be denoted by $\Ham(M,\omega)$, often omitting the symplectic form $\omega$ from the notation when there is no ambiguity. The set of all Hamiltonian diffeomorphisms $\Ham(M)$ forms a group for the composition operation. Indeed, for any Hamiltonians $H$ and $K$, one can check that the diffeomorphism $\varphi_H \circ \varphi_K$ is a Hamiltonian diffeomorphism generated by the Hamiltonian $H\#K$ given by
\[(H\#K)_t(x)\coloneqq H_t(x)+K_t\left((\Phi^t_H)^{-1}(x)\right),\] 
and the inverse $\varphi_H^{-1}$ is generated by $\overline H$ given by $\overline H_t(x)\coloneqq-H_t(\Phi^t_H(x))$.

When $M$ is a closed symplectic manifold, we define the $L^{(1,\infty)}$-norm of a Hamiltonian by 
\[\lVert H \rVert_{(1,\infty)}\coloneqq \int_{S^1}\osc H_t dt=\int_{S^1}(\max H_t-\min H_t)dt.\]
The \textit{Hofer norm} of a Hamiltonian diffeomorphism $\varphi\in\Ham(M)$ is then defined as follows:
\[\lVert \varphi\rVert=\inf\{\lVert H\rVert_{(1,\infty)} \mid \, H \colon S^1\times M \to \R \text{ with }\varphi=\Phi^1_H\}.\]
One can show that the above definition gives rise to a conjugation invariant norm on the group $\Ham(M,\omega)$, we point out that its non-degeneracy is non trivial \cite{LM}. The Hofer norm thus induces a non-degenerate bi-invariant metric on $\Ham(M)$ called the \textit{Hofer distance}:
\[d(\varphi,\psi)=d(id,\varphi^{-1}\psi)\coloneqq\lVert \varphi^{-1}\psi\rVert.\]

Given a Lagrangian submanifold $L_0$ of $(M,\omega)$, let $\mathcal L(L_0)$ be the set of its Hamiltonian isotopies, i.e. \[\mathcal L(L_0)=\{\varphi(L_0) \mid\varphi\in\Ham(M)\}.\]
This set can be equipped with a distance induced by the Hofer distance on $\Ham(M)$: \[d(L,L')\coloneqq \inf\{\lVert \varphi\rVert \mid \,\varphi (L)=L'\}. \]

\subsection{Quasimorphisms and relation to the Hofer norm}

An important tool in the study of the group $\Ham(M,\omega)$ are quasimorphisms. A simple group $G$ does not admit any interesting morphism $q \colon G\to \R$ since its kernel would be trivial or the whole group; the notion of quasimorphism gives a remedy by loosening the definition of a morphism. We give a quick overview of quasimorphisms and their homogenisation.

\begin{dfn}
    A \textbf{quasimorphism} on a group $G$ is a map $q:G\to\R$ such that the function $Dq \colon G \times G \to \R$,
    \[Dq(g,h)\coloneqq |q(gh)-q(g)-q(h)|\]
    is uniformly bounded on $G\times G$.
    The \textbf{defect} of $q$ is the supremum of $Dq$ over $G \times G$. The quasimorphism $q$ is called \textbf{homogeneous} if, moreover, for any integer $n\in\Z$ and for any $g\in G$, the following identity is true \[q(g^n)=nq(g).\]
\end{dfn}

In the following proposition, we give a proof of the well-known fact that one can always build a homogeneous quasimorphism from any given quasimorphism.

\begin{prop}
    Any quasimorphism $q:G\to \R$ gives rise to a homogeneous quasimorphism by the formula 
    \[\mu(g)\coloneqq \lim\limits_{k\to\infty}\frac {q(g^k)} k \text{ .}\]
    The resulting quasimorphism $\mu$ is called the \textbf{homogenisation} of $q$.
    Moreover, if $D$ is the defect of $q$, then $|q-\mu|$ is bounded by $D$ and $\mu$ has defect at most $4D$.
\end{prop}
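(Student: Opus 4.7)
The plan is to first establish the existence of the limit with a quantitative error estimate, and then deduce all the remaining claims essentially by the triangle inequality. The main ingredient is an iterated version of the quasimorphism inequality: by a straightforward induction on $n$, one shows that $|q(g^{mn}) - n q(g^m)| \leq (n-1)D$ for every $m, n \geq 1$. Combining this bound applied to the pairs $(m, mn)$ and $(n, mn)$ yields
\[\left|\frac{q(g^m)}{m} - \frac{q(g^n)}{n}\right| \leq \frac{D}{m} + \frac{D}{n},\]
so the sequence $(q(g^k)/k)_{k\ge 1}$ is Cauchy in $\R$, and I would define $\mu(g)$ as its limit. Passing to the limit $n \to \infty$ in the same estimate gives $|q(g^m) - m\mu(g)| \leq D$, which in the case $m = 1$ is precisely the bound $|q - \mu| \leq D$.

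With this in hand, the remaining statements are formal. For the defect bound, I would decompose $\mu(gh) - \mu(g) - \mu(h)$ as the sum of the four terms $\mu(gh) - q(gh)$, $q(gh) - q(g) - q(h)$, $q(g) - \mu(g)$ and $q(h) - \mu(h)$, each of absolute value at most $D$, yielding the announced estimate of $4D$. For homogeneity on positive powers, the subsequence $q(g^{nk})/(nk)$ of $(q(g^j)/j)_j$ tends to $\mu(g)$, while $q((g^n)^k)/k = n \cdot q(g^{nk})/(nk)$ tends by definition to $\mu(g^n)$, so $\mu(g^n) = n\mu(g)$. For negative powers, applying the quasimorphism estimate to the identity $g^k g^{-k} = e$ shows that $|q(g^k) + q(g^{-k})|$ is bounded uniformly in $k$; dividing by $k$ and passing to the limit gives $\mu(g^{-1}) = -\mu(g)$, and the identity $\mu(g^n) = n\mu(g)$ then extends to all integers.

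The argument presents no real obstacle: the only step that is not immediate is the Cauchy estimate for $(q(g^k)/k)_k$, which is a Fekete-type lemma built from the iterated defect inequality. Once the control $|q - \mu| \leq D$ is obtained, all other properties reduce to careful bookkeeping with the triangle inequality, and I would present the proof in exactly this order: iterated defect estimate, Cauchy argument and definition of $\mu$, the bound $|q - \mu| \leq D$, the defect bound, and finally homogeneity.
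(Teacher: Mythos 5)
Your proposal is correct and follows essentially the same route as the paper: iterated defect inequality, existence of the limit, the bound $|q-\mu|\leq D$, the $4D$ defect estimate by the four-term triangle inequality, and homogeneity via subsequences together with the boundedness of $|q(g^k)+q(g^{-k})|$. The only cosmetic difference is that you prove convergence by an explicit Cauchy estimate rather than invoking Fekete's lemma on the subadditive sequence $q(g^n)+D$; this is a slight refinement, since it delivers finiteness of the limit and the bound $|q(g^m)-m\mu(g)|\leq D$ in one stroke.
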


\begin{proof}
    We first prove that the limit is well-defined. Let $g$ be any elements in $G$, since $q$ is a quasimorphism with defect $D$, for any pair of positive integers $n$ and $m$, we get the following inequality
    \[q(g^{n+m})\leq q(g^n)+q(g^m)+D.\]
    Which we rewrite as,
    \[q(g^{n+m})+D\leq (q(g^n)+D)+(q(g^m)+D).\]
    Therefore, the sequence $(q(g^n)+D)_{n\in\N}$ is sub-additive, and by Fekete's lemma, the following limit
    \[\lim\limits_{n\to\infty}\frac {q(g^n)+D} n=\lim\limits_{n\to\infty}\frac {q(g^n)} n=\mu(g)\]
    is well defined. We go on to prove the properties of $\mu$ announced in the proposition.
    
    The fact that for all $g$, $\mu(g^{-1})=-\mu(g)$ is a consequence of the definition of $\mu$ and the fact that the quantity $\vert q(g^n)+q(g^{-n})\vert\leq q(1_G)+D$ is uniformly bounded in $n$. From the definition of $\mu$ and this last property, we immediately find that $\mu$ is homogeneous due to the following equalities
    \[\mu(g^k)=\lim\limits_{n\to\infty}\frac{q(g^{nk})}n=k\cdot \lim\limits_{n\to\infty}\frac{q(g^{nk})}{nk}=k\mu(g),\]
    that hold true for all integer $k \in \Z_{>0}$ and element $g$ of $G$. We now show that the difference between $q$ and $\mu$ is bounded by $D$. Again, by the quasimorphism property, one can obtain the estimate $|q(g^n)-nq(g)|\leq (n-1)D$ and therefore
    \begin{align}\label{eq.mu and q bounded}
        |\mu(g)-q(g)|=\left|\lim\limits_{n\to\infty}\frac {q(g^n)-nq(g)} n\right|\leq D.
    \end{align}
    The fact that $\mu$ is itself a quasimorphism of defect at most $4D$ now follows. Indeed, let $g$ and $h$ be two elements in $G$. Then, using \ref{eq.mu and q bounded},
    \[|\mu(gh)-\mu(g)-\mu(h)|\leq |q(gh)-q(g)-q(h)|+3D\leq 4D.\]
    This finishes the proof of the proposition.
\end{proof}

We come back to symplectic topology and the study of the Hofer norm. Let $L_0$ be a closed Lagrangian submanifold of a closed symplectic manifold $(M,\omega)$. Denote by $\mathcal S(L_0)$ the \textit{stabiliser} of $L_0$ inside $\Ham(M)$, i.e. the subgroup 
\[\mathcal S(L_0)\coloneqq \{\varphi\in \Ham(M)|\varphi(L_0)=L_0\},\]
of Hamiltonian diffeomorphisms that preserve $L_0$ as a set. The existence of a non-vanishing homogeneous quasimorphism on $\Ham(M,\omega)$ that is Hofer-Lipschitz and vanishes on the stabiliser implies that the space $\mathcal{L}(L_0)$ has unbounded diameter for the Hofer norm. This fact was used by Khanevsky in \cite{K09, K11}.

\begin{prop}\label{prop:Khanevsky}
    Let $\mu$ be a homogeneous Hofer-Lipschitz quasimorphism on $\Ham(M)$, that vanishes on $\mathcal S(L_0)$. For any $\varphi$ in $\Ham(M)$ such that $\mu(\varphi)\neq 0$ then $d(\varphi^n(L_0),L_0)$ grows linearly with $n$, and in particular $\mathcal L(L_0)$ has infinite Hofer diameter.
\end{prop}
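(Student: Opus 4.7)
The plan is to use the standard Khanevsky-type argument: realise the Hofer distance $d(\varphi^n(L_0),L_0)$ by a Hamiltonian diffeomorphism $\psi_n$ sending $L_0$ to $\varphi^n(L_0)$, then compare $\mu(\psi_n)$ with $\mu(\varphi^n)=n\mu(\varphi)$ via the defect inequality, using that $\psi_n^{-1}\varphi^n$ lies in $\mathcal S(L_0)$.

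First, I would fix $\varphi\in\Ham(M)$ with $\mu(\varphi)\neq 0$ and, for each $n\geq 1$, pick $\psi_n\in\Ham(M)$ with $\psi_n(L_0)=\varphi^n(L_0)$ and
\[\|\psi_n\|\leq d(\varphi^n(L_0),L_0)+1.\]
Such a $\psi_n$ exists by the definition of the Hofer distance on $\mathcal L(L_0)$. Then $\psi_n^{-1}\varphi^n$ preserves $L_0$ setwise, so it lies in $\mathcal S(L_0)$, and the vanishing hypothesis gives $\mu(\psi_n^{-1}\varphi^n)=0$.

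Next, let $D$ denote the defect of $\mu$. Applying the quasimorphism inequality to the product $\psi_n\cdot(\psi_n^{-1}\varphi^n)=\varphi^n$ yields
\[|\mu(\varphi^n)-\mu(\psi_n)-\mu(\psi_n^{-1}\varphi^n)|\leq D,\]
which, combined with $\mu(\psi_n^{-1}\varphi^n)=0$ and homogeneity $\mu(\varphi^n)=n\mu(\varphi)$, gives
\[|\mu(\psi_n)|\geq n|\mu(\varphi)|-D.\]
Now let $C>0$ be a Hofer-Lipschitz constant for $\mu$, so that $|\mu(\psi_n)|\leq C\|\psi_n\|$. Combining these bounds,
\[d(\varphi^n(L_0),L_0)+1\geq \|\psi_n\|\geq \frac{n|\mu(\varphi)|-D}{C},\]
which grows linearly in $n$. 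In particular, $\mathcal L(L_0)$ has infinite Hofer diameter.

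No step is genuinely an obstacle here; the only small subtlety is checking that $\psi_n^{-1}\varphi^n\in\mathcal S(L_0)$, which is immediate from $\psi_n(L_0)=\varphi^n(L_0)$, and that one must bound $\|\psi_n\|$ by $d(\varphi^n(L_0),L_0)+1$ (rather than equality) because the infimum in the definition of the Lagrangian Hofer distance need not be attained.
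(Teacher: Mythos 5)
Your proof is correct and follows essentially the same route as the paper's: realise (up to $+1$) the Lagrangian Hofer distance by some $\psi_n$, note that $\psi_n^{-1}\varphi^n\in\mathcal S(L_0)$, and combine the defect inequality, homogeneity, and the Hofer-Lipschitz property to get the linear lower bound. The only cosmetic difference is that the paper bounds $\|\Psi_n\|$ from below for \emph{every} $\Psi_n$ mapping $\varphi^n(L_0)$ to $L_0$ (which handles the non-attained infimum implicitly), whereas you make the same point explicit by adding the $+1$.
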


\begin{proof}
Let $D$ be the defect of the quasimorphism $\mu$. We may assume without loss of generality that $\mu(\varphi) \neq 0$.

In order to show the linear growth of the sequence $d(\varphi^n(L_0),L_0)$. We need to show that the Hofer norm of any sequence of Hamiltonian diffeomorphisms $(\Psi_n)_n$ such that $\Psi_n(\varphi^n(L_0))=L_0$ has linear growth. Since the quasimorphism is Lipschitz with respect to the Hofer norm it is thus enough to show that the sequence $\vert\mu(\Psi_n)\vert$ grows linearly. By hypothesis, we know that $\mu$ vanishes on the stabiliser $\mathcal{S}(L_0)$ of $L_0$ and since $\Psi_n \circ \varphi ^n$ lies in this stabiliser, we get the following inequality
\[\vert \mu(\Psi_n)+\mu(\varphi^n)-\mu(\Psi_n \circ \varphi^n) \vert \leq D\]
which in turns yields,
\[\vert \mu(\Psi_n)+\mu(\varphi^n)\vert \leq D.\]
By the homogeneity of $\mu$ and since $\mu(\varphi)\neq 0$, the sequence $\mu(\varphi^n)$ grows linearly. Hence, the sequence $\vert\mu(\Psi_n)\vert$ also has linear growth, which completes the proof of the proposition.
\end{proof}

\section{Quantitative Heegaard Floer homology and link spectral invariants}
\label{sec:Heegaard Floer}

In order to show Theorem \ref{thm.main thm}, we need to define the link spectral invariants and go through some of their properties. We do so in this section. In Section \ref{subsec.def of spec inv}, we define the spectral invariants that we use. In Section \ref{subsec.Holo strips in Heegaard-Floer}, we review some ways to count holomorphic strips that arise in the definition of Heegaard-Floer homology. In particular, we recall the tautological correspondence as well as some Lemma of Osv\'ath and Szab\'o to count strips that correspond to cylinders under the tautological correspondence.

\subsection{Definition of the spectral invariants}\label{subsec.def of spec inv}

In this section, we recall the definition of the link spectral invariants of symplectic surfaces, following Cristofaro-Gardiner et al. \cite{CGHMSS22}. The proofs that those invariants are well-defined can be found in \cite{CGHMSS22}. In what follows $(\Sigma,\omega)$ is a closed symplectic surface, equipped with a complex structure $J_\Sigma$ such that $\omega$ is a K\"ahler form.

\begin{dfn}\label{dfn:monotoneLink}
    A Lagrangian link on $\Sigma$ is a finite disjoint union of embedded circles $\underline L=L_1\cup \ldots \cup L_k\subset \Sigma$. Let $\eta$ be a non-negative real number, and let $\underline L$ be a Lagrangian link inside $\Sigma$. Denote by $B_1, \ldots, B_s$ the connected components of $\Sigma\setminus \underline L$. Denote by $\overline{B_i}$ the closure of $B_i$, by $\tau_i$ the number of boundary components of $\overline{B_i}$ and by $A_i$ its symplectic area. Then, $\underline L$ is said to be $\eta$-monotone if the following are satisfied:
    \begin{itemize}
        \item for all $1\leq i \leq s$, $\overline{B_i}$ is planar;
        \item the quantity $A_i + 2\eta(\tau_i-1)$ does not depend on $i$.
    \end{itemize}
    This quantity is called the \textbf{monotonicity constant} of $\underline L$, and $\eta$ is called the \textbf{bulk parameter}.
\end{dfn}

\begin{rk}
    The planarity assumption can be relaxed to define the Floer homology of the link for slightly more general links, as in \cite{MoT}. However, the definition becomes more complicated, and it will not be useful for our purposes as we only consider the case of links inside the sphere.
\end{rk}

Fix for now $\underline L=L_1\cup \ldots \cup L_k\subset \Sigma$ an $\eta$ -monotone link with monotonicity constant $\lambda$. Let $X\coloneqq \Sym^k(\Sigma)$ be the $k$-th symmetric power of $\Sigma$, that is the quotient of $\Sigma^k$ by the action of the permutation group $\mathfrak S_k$ acting by permuting the coordinates. Denote by $\pi \colon \Sigma^k \to \Sym^k(\Sigma)\coloneqq \Sigma^k/ \mathfrak{S}_k$ the quotient map, and the `fat' diagonal by
\[\Delta \coloneqq \pi\left(\left\{(x_1,\ldots,x_k)\in \Sigma^k|\exists i\neq j, x_i=x_j\right\}\right).\]
For a $k$-tuple of point $(x_1, \ldots,x_k) \in \Sigma^k$ we denote $[x_1,\ldots,x_k] \coloneqq \pi(x_1,\ldots,x_k) \in \Sym^k(\Sigma)$ the corresponding point in the symmetrisation.

The form $\omega^{\oplus k}$ induces a current $\omega_X=\frac 1 {k!} \pi_*\omega^{\oplus k}$ on $X$ that is a smooth symplectic form away from $\Delta$. The complex structure $J_\Sigma^{\oplus k}$ on $\Sigma^k$ also descends to a complex structure $J_X$ on $X$.  Let $\Sym (\underline L)\coloneqq \pi(L_1\times\ldots\times L_k)$.
Let $V$ be a neighbourhood of $\Delta$ away from $\Sym (\underline L)$ (observe that since the $L_i$'s are disjoint, $\Sym (\underline L)\cap\Delta = \emptyset$). Then, following \cite[Section 7]{Perutz08}, one can find a smooth symplectic form $\omega_V$ on $X$, which coincides with $\omega_X$ outside of $V$, such that 
\[[\omega_V]=[\omega_X]=\frac 1 {k!} \pi_*[\omega^{\oplus k}].\]
Then, $\Sym (\underline L)$ becomes a Lagrangian submanifold of $(X,\omega_V)$. Moreover, it satisfies the following monotonicity property (\cite[Lemma 4.21]{CGHMSS22} for links satisfying a planarity assumption, \cite[Proposition 2.4]{MoT} for links satisfying a slightly more general condition):

\begin{prop}
    \label{prop:monotonicity}
    For all $u$ in the image of $\pi_2(X,\Sym(\underline L))\to H_2(X,\Sym(\underline L))$, we have 
    $$\omega_V(u)+\eta\Delta\cdot u =\frac \lambda 2 \mu(u),$$
    where $\mu:H_2(X,\Sym(\underline L))\to\Z$ is the Maslov class.
\end{prop}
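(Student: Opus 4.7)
The strategy is to reduce the identity to a check on generators of the image of $\pi_2(X, \Sym(\underline L)) \to H_2(X, \Sym(\underline L))$ and then extend by $\Z$-linearity, exploiting the fact that each of the three quantities $\omega_V(u)$, $\Delta \cdot u$ and $\mu(u)$ depends $\Z$-linearly on $u \in H_2(X, \Sym(\underline L))$. A natural spanning set is indexed by the connected components $B_1, \ldots, B_s$ of $\Sigma \setminus \underline L$: for each $B_j$ one constructs a disc class $D_j \in \pi_2(X, \Sym(\underline L))$ whose \emph{shadow} in $\Sigma$ (the 2-chain obtained by pushing a representative down to $\Sigma$) equals $\overline{B_j}$ with multiplicity one. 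When $\tau_j = 1$ this class is just a disc in $\Sigma$ bounded by the relevant component of $\underline L$ times constant points on the remaining $L_i$'s; when $\tau_j \geq 2$, one uses a branched cover of $\overline{B_j}$ of the appropriate degree to manufacture a map $(D^2, \partial D^2) \to (X, \Sym(\underline L))$.

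The core of the argument is the computation of the three invariants on each $D_j$. First, the cohomological identity $[\omega_V] = \tfrac{1}{k!} \pi_* [\omega^{\oplus k}]$ and the shadow interpretation yield $\omega_V(D_j) = A_j$. Second, $\mu(D_j) = 2$: one verifies this via an explicit trivialisation of $TX$ along $\Sym(\underline L)$ coming from the local product structure near the torus factor, reducing the computation to the Maslov index of a standard disc in $(\Sigma, L_i)$. Third, $\Delta \cdot D_j = 2(\tau_j - 1)$: since the $\mathfrak{S}_k$-cover $\Sigma^k \to \Sym^k(\Sigma)$ has ramification index $2$ along the small diagonals, the cycle $\Delta$ equals twice the reduced branch divisor, and a Hurwitz-type count identifies the number of branch points of a representative of $D_j$ with $\tau_j - 1$.

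Combining these computations with the $\eta$-monotonicity of $\underline L$, which reads $A_j + 2\eta(\tau_j - 1) = \lambda$ for every $j$, one gets
\[\omega_V(D_j) + \eta \, \Delta \cdot D_j = A_j + 2\eta(\tau_j - 1) = \lambda = \tfrac{\lambda}{2} \, \mu(D_j),\]
so the identity holds on each generator. Extending by $\Z$-linearity proves the proposition.

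The main obstacle is the intersection count $\Delta \cdot D_j = 2(\tau_j - 1)$: one must simultaneously produce an explicit geometric representative of $D_j$ on which intersections with $\Delta$ can be counted, and ensure that this count is compatible with the smoothed form $\omega_V$, which has been perturbed in the neighbourhood $V$ of $\Delta$. The Hurwitz/branched-cover analysis together with Perutz's careful construction of $\omega_V$ handles both issues; this is worked out in detail in \cite[Lemma 4.21]{CGHMSS22} for the planar case and in \cite[Proposition 2.4]{MoT} for slightly more general links.
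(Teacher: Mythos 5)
The paper does not actually prove Proposition \ref{prop:monotonicity}: it is quoted with a citation to \cite[Lemma 4.21]{CGHMSS22} (and \cite[Proposition 2.4]{MoT}), and your sketch correctly reproduces the argument from those references --- reduce to the disc classes $D_j$ with shadow $\overline{B_j}$, compute $\omega_V(D_j)=A_j$, $\mu(D_j)=2$ and $\Delta\cdot D_j=2(\tau_j-1)$, then invoke $\eta$-monotonicity and extend by linearity. One bookkeeping remark: by Riemann--Hurwitz the tautological branched cover representing $D_j$ (a planar surface with $\tau_j$ boundary circles, completed by $k-\tau_j$ trivial sheets) has $2(\tau_j-1)$ simple branch points, each of which meets $\Delta$ with local multiplicity one --- rather than $\tau_j-1$ branch points each counted twice as you state --- but the total $\Delta\cdot D_j=2(\tau_j-1)$ is the same, so the conclusion is unaffected.
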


Let $H$ be a Hamiltonian on $\Sigma$. It induces a Hamiltonian $\Sym^k (H)$ on $X$ by the formula 
\[\Sym ^k (H_t)([x_1,\ldots,x_k])=H_t(x_1)+\ldots +H_t(x_k).\] 
We can choose the neighbourhood $V$ of $\Delta$ above such that it does not intersect the trace of $\Sym (\underline L))$ under the flow of $\Sym^k(H)$ up to time 1.

\begin{dfn}\label{dfn.nearly symmetric acs}
For an almost complex structure $j$ on $\Sigma$ and an open set $V$ with
\[\Delta \subset V \subset \Sym^k(\Sigma).\]
An almost complex structure $J$ on $\Sym^k(\Sigma)$ is called $(V,\omega,j)$-\textbf{nearly symmetric} if the following are satisfied:
    \begin{itemize}
    \setlength\itemsep{-1pt}
        \item $J$ coincides with $\Sym^k(j)$ inside $V$;
        \item $J$ tames $\omega_X$ outside of $V$.
    \end{itemize}
\end{dfn}

Then, for certain well-chosen neighbourhood $V$ of the diagonal and given a generic family $(J_t)_{t\in(0,1)}$ of $(V,\omega,j)$-nearly symmetric almost complex structures, one can show, following \cite{CGHMSS22} that the Lagrangian Floer complex 
\[CF_*(\underline L,H)\coloneqq CF_*(\Sym(\underline L),\Sym^k(H),J_t)\]
is well defined, and that its homology is non-vanishing. In Section 5, instead of taking the usual small neighbourhoods $V$ of the diagonal $\Delta$, we will sometimes have to take open sets $V$ that contain some complex divisor. We recall the definition of this complex (using homological conventions instead of cohomological as in \cite{CGHMSS22}). Fix a reference point $x$ in $\Sym (\underline L)$. It induces a reference path $\gamma_x:t\mapsto \Sym\Phi^t_{H}(x)$ from $\Sym(\underline L)$ to $\Sym \Phi^1_H(\underline L)$. We define the space of paths
\[\mathcal P\coloneqq \{y:[0,1]\to X|y(0)\in\Sym\underline L, y(1)\in \Sym\Phi^1_H(\underline L)\}.\] 
For a path $y$ lying in the same connected component as $\gamma_x$, a capping is defined as an isotopy between $\gamma_x$ and $y$ in $\mathcal P$. We define the space of equivalence classes of capped paths by
\[\widetilde{\mathcal P_x}\coloneqq \left\lbrace\widehat y:[0,1]\times [0,1]\to X|\widehat y(0,\cdot)=\gamma_x(\cdot)\text{ and }\forall s\in [0,1],\widehat y(s,\cdot)\in\mathcal P\right\rbrace/\sim,\]
where $\widehat y\sim \widehat z$ if and only if $\widehat y(1,\cdot)=\widehat z(1,\cdot)$ and
\begin{align}\label{eq.equiv of cappings}
    \int_{[0,1]\times[0,1]}\widehat y^*\omega_V+\eta\widehat y \cdot\Delta=\int_{[0,1]\times[0,1]}\widehat z^*\omega_V+\eta\widehat z\cdot\Delta.
\end{align}
We define an action functional on $\widetilde{\mathcal P_x}$ by the formula 
\[\mathcal A_H([\widehat y])=\int_0^1\Sym^k(H)_t(x)dt-\int_{[0,1]\times[0,1]}\widehat y^*\omega_V-\eta\widehat y \cdot\Delta.\]

The critical points of this functional are the equivalence classes of cappings $[\widehat y]$ such that $\widehat y(1,\cdot)$ is a constant path at an intersection point between $\Sym(\underline L)$ and $\Sym(\Phi^1_H(\underline L))$.
Let $CF^0(\Sym(\underline L),\Sym^k(H))$ be the $\C$-vector space generated by those critical points.
Given a disc $u$ in $X$ with boundary on $\Sym(\underline L)$, one can glue it to a capping $\widehat y$ to obtain a new capping $u\#\widehat y$ with the same end points. By Proposition \ref{prop:monotonicity}, the equivalence class of $u\#\widehat y$ depends only on $\mu(u)$. Therefore, we get a free $\C[T^{\pm1}]$-action on $CF^0(\Sym(\underline L),\Sym^k(H),x)$ where $T$ acts on a class $[\widehat y]$ by adjoining a disc of Maslov index 2. The action of $T$ has the following consequence on the action functional
\[\mathcal A_H(T^n\cdot[\widehat y])=\mathcal A_H([\widehat y])-n\lambda.\]
Let $\Lambda\coloneqq \C[[T]][T^{-1}]$ the ring of formal Laurent series in $T$. Then, we finally define 
\[CF_*(\Sym(\underline L),\Sym^k(H))\coloneqq CF^0(\Sym(\underline L),\Sym^k(H))\otimes_{\C[T^{\pm1}]}\Lambda\,.\]

The isomorphism class of this filtered $\Lambda$-module does not depend on the choice of the reference point $x$. This can be deduced from the following lemma.

\begin{lemma}
    \label{lem:action_difference_for_cappings}
    Consider two reference points $x$ and $x'$. Then,
    \begin{enumerate}[label=(\roman*)]
        \item $\gamma_x$ and $\gamma_{x'}$ lie in the same connected component of $\mathcal P$;
        \item for any path $y$ in this connected component, and any cappings $\widehat y$ from $\gamma_x$ to $y$ and $\widehat y'$ from $\gamma_{x'}$ to $y$, we have
        \[\mathcal A^x_H([\widehat y])-\mathcal A^{x'}_H([\widehat y'])\in \lambda \Z,\]
        where for clarity's sake we added the reference point in the notation for the action.
    \end{enumerate}
\end{lemma}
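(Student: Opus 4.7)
For part (i), since the $L_i$ are pairwise disjoint circles, $\Sym(\underline L) = \pi(L_1 \times \cdots \times L_k)$ is diffeomorphic to the torus $L_1 \times \cdots \times L_k$, hence path-connected. Pick any path $\alpha \colon [0,1] \to \Sym(\underline L)$ from $x$ to $x'$; then
\[\Gamma(s,t) \coloneqq \Sym\Phi^t_H(\alpha(s))\]
is a continuous homotopy in $\mathcal P$ from $\gamma_x$ to $\gamma_{x'}$, because each slice $\Gamma(s,\cdot)$ starts at $\alpha(s) \in \Sym(\underline L)$ and ends at $\Sym\Phi^1_H(\alpha(s)) \in \Sym\Phi^1_H(\underline L)$.

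For part (ii), I would assemble a 2-chain $W$ by gluing $\widehat y$, the reverse of $\widehat y'$, and the reverse of $\Gamma$ along the three paths $y$, $\gamma_{x'}$, and $\gamma_x$ that they pairwise share; the resulting domain is a disc (a hexagon with three pairs of identified edges), and the remaining boundary of $W$ lies in $\Sym(\underline L) \cup \Sym\Phi^1_H(\underline L)$. By the choice of $V$, the image of $\Gamma$ avoids $V$, and in particular avoids $\Delta$, so $\Gamma \cdot \Delta = 0$ and $\Gamma^*\omega_V = \Gamma^*\omega_X$ on the $\Gamma$-piece of $W$.

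The next step is to evaluate the quantity $\int W^*\omega_V + \eta\, W\cdot\Delta$ in two ways. First, a Stokes-type calculation on the $\Gamma$-piece, using $\partial_t\Gamma = X_{\Sym^k(H)_t}(\Gamma)$ together with $\omega_X(\cdot,X_H) = dH$, shows that this quantity equals $\mathcal A^x_H([\widehat y]) - \mathcal A^{x'}_H([\widehat y'])$, with the Hamiltonian area of $\Gamma$ exactly reconciling the reference-point terms $\int_0^1 \Sym^k(H)_t(x)\,dt$ and $\int_0^1 \Sym^k(H)_t(x')\,dt$. Second, straightening the $\Sym\Phi^1_H(\underline L)$-portion of $\partial W$ by pushing through the inverse flow $\Phi^{-t}_H$ at each $t$ converts $W$ into a topological disc $u \in \pi_2(X, \Sym(\underline L))$; since $\Phi^t_H$ preserves $\omega_X$ away from $V$ and preserves $\Delta$ setwise, $\int W^*\omega_V + \eta\, W\cdot\Delta = \int u^*\omega_V + \eta\, u\cdot\Delta$. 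Proposition \ref{prop:monotonicity} then gives $\int u^*\omega_V + \eta\, u\cdot\Delta = \tfrac{\lambda}{2}\mu(u) \in \lambda\Z$, since $T$ acts on cappings by adjoining discs of Maslov index $2$ and hence $\mu(u) \in 2\Z$. Combining the two evaluations proves the claim.

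The main obstacle will be the sign and convention bookkeeping in the Stokes identification, specifically checking that the contribution of $\Gamma$ to $\int W^*\omega_V$ precisely offsets the reference-term discrepancy between $\mathcal A^x_H$ and $\mathcal A^{x'}_H$; any leftover slack should itself live in $\lambda\Z$ via varying the homotopy class of $\alpha$ in $\Sym(\underline L) \cong T^k$ and invoking monotonicity on the resulting tori.
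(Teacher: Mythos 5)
Your part (i) is exactly the paper's argument, and the core of part (ii) is also the paper's mechanism: straighten the difference of the two cappings by $(\Sym\Phi^t_H)^{-1}$ to produce a class with boundary on $\Sym(\underline L)$, invoke Proposition \ref{prop:monotonicity}, and use that the relevant Maslov indices are even. The only structural difference is that the paper works directly with the square $w=\widehat y\#(\widehat y')^{-1}$ (whose $s=0$ and $s=1$ edges are $\gamma_x$ and $\gamma_{x'}$) and lets the correction term produced by the $t$-dependent straightening cancel the reference terms $\int_0^1\Sym^k(H)_t(x)\,dt-\int_0^1\Sym^k(H)_t(x')\,dt$, whereas you close the square up with $\Gamma^{-1}$ and put that same cancellation into the area of the $\Gamma$-piece. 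These are the same computation packaged differently.

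Two points in your packaging need repair. First, gluing three quadrilaterals cyclically along the shared edges $y$, $\gamma_{x'}$, $\gamma_x$ produces an \emph{annulus} $S^1\times[0,1]$ (one boundary circle in $\Sym(\underline L)$, one in $\Sym\Phi^1_H(\underline L)$), not a disc; after straightening you get an annulus with both boundary circles in $\Sym(\underline L)$, and Proposition \ref{prop:monotonicity} only applies to classes in the image of $\pi_2(X,\Sym(\underline L))$. This is rescued by noting that the straightened $\Gamma$-piece is the degenerate map $(s,t)\mapsto\alpha(s)$, entirely contained in $\Sym(\underline L)$, so the annulus is homologous rel $\Sym(\underline L)$ to the straightened disc $\widetilde w$ of the paper — but that observation must be made. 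Second, the identity $\int W^*\omega_V+\eta W\cdot\Delta=\int u^*\omega_V+\eta u\cdot\Delta$ after straightening does \emph{not} follow merely from $\Phi^t_H$ being symplectic: a $t$-dependent family of symplectomorphisms changes the area by the boundary term $\int_0^1\bigl(H_t(W(s_{\max},t))-H_t(W(s_{\min},t))\bigr)dt$, which is exactly the correction the paper exploits; it vanishes for your $W$ only because $W$ is a closed loop of paths with no $s$-boundary. Relatedly, your fallback for ``leftover slack'' is not available: any residual discrepancy would be a Hamiltonian integral depending on $x$, $x'$ and $H$, while varying the homotopy class of $\alpha$ in $\Sym(\underline L)\cong T^k$ only changes $\mu(\widetilde W)$ by even integers and hence the right-hand side by elements of $\lambda\Z$ — it cannot absorb a non-quantised term, so the bookkeeping must genuinely close as in the paper's computation rather than be deferred.
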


\begin{proof}
    The first point is trivial: since $\Sym(\underline L)$ is path-connected, there exists a path $\alpha$ in $\Sym(\underline L)$ from $x$ to $x'$. Then, the isotopy $\beta(s,t)\coloneqq \Sym \Phi_H^t(\alpha(s))$ is a path in $\mathcal P$ from $\gamma_x$ to $\gamma_{x'}$.

    Now, let $y$ be a path in this connected component, $\widehat y$ a capping from $\gamma_x$ to $y$ and $\widehat y'$ a capping from $\gamma_{x'}$ to $y$. Let $w\coloneqq \widehat y \# (\widehat y')^{-1}$ be the capping from $\gamma_x$ to $\gamma_{x'}$ obtained by concatenating $\widehat y$ with $\widehat y'$ taken with opposite orientation. Then,

    \begin{align*}
        A^x_H([\widehat y])-\mathcal A^{x'}_H([\widehat y'])&=\int_0^1\Sym^k(H)_t(x)dt-\int_{[0,1]\times[0,1]}\widehat y^*\omega_V-\eta\widehat y \cdot\Delta\\
        &-\int_0^1\Sym^k(H)_t(x')dt+\int_{[0,1]\times[0,1]}\widehat y'^*\omega_V+\eta\widehat y' \cdot\Delta\\
        &=\int_0^1\left(\Sym^k(H)_t(x)-\Sym^k(H)_t(x')\right)dt-\int_{[0,1]\times[0,1]}w^*\omega_V-\eta w \cdot\Delta. 
    \end{align*}

    Now, for $(s,t)\in [0,1]\times[0,1]$, define $\widetilde w(s,t)\coloneqq \left(\Sym \Phi_H^t\right)^{-1}(w(s,t))$. Observe that for all $(s,t)\in[0,1]\times [0,1]$, $\widetilde w(0,t)=x$, $\widetilde w(0,t)=x$, $\widetilde w(s,0)\in \Sym(\underline L)$ and $\widetilde w(s,1)\in \Sym(\underline L)$. In particular, $[\widetilde w]$ lies in the image of $\pi_2(X,\Sym(\underline L))\to H_2(X,\Sym(\underline L))$.

    Moreover, using the fact that for all $t$ in $[0,1]$, $\left(\Sym \Phi_H^t\right)^{-1}$ is a symplectomorphism, and the definition of Hamiltonian vector fields, a computation gives
    \begin{align*}
        \int_{[0,1]\times[0,1]}(\widetilde w)^*\omega_V&=\int_{[0,1]\times[0,1]}w^*\omega_V+\int_0^1\left(\Sym^k(H)_t(\widetilde w(1,t))-\Sym^k(H)_t(\widetilde w(0,t))\right)dt\\
        &=\int_{[0,1]\times[0,1]}w^*\omega_V+\int_0^1\left(\Sym^k(H)_t(x')-\Sym^k(H)_t(x)\right)dt
    \end{align*}
    Together with the previous equation, we get
    \begin{align*}
        A^x_H([\widehat y])-\mathcal A^{x'}_H([\widehat y'])&=-\int_{[0,1]\times[0,1]}(\widetilde w)^*\omega_V-\eta w \cdot\Delta\\
        &=-\int_{[0,1]\times[0,1]}(\widetilde w)^*\omega_V-\eta \widetilde w \cdot\Delta
    \end{align*}
    where the second equality comes from the fact that for all $t$ in $[0,1]$, $\left(\Sym \Phi_H^t\right)^{-1}$ preserves the fat diagonal $\Delta$.
    Therefore, $A^x_H([\widehat y])-\mathcal A^{x'}_H([\widehat y'])$ lies in $\lambda \Z$ by Proposition \ref{prop:monotonicity}.
    
\end{proof}

\begin{rk}\label{rk:recapping_with_discs}
    Applying Lemma \ref{lem:action_difference_for_cappings} for $x=x'$, we find that the actions of two different cappings $\widehat y$ and $\widehat y'$ for the same path $y$ differ by a multiple of $\lambda$. Therefore, there exists an integer $n\in \Z$ such that $[\widehat y']=T^n\cdot[\widehat y]$, i.e one can get from any capping to another (for the same end path) by adjoining Maslov index 2 discs with boundary on $\Sym(\underline L)$ (possibly with reverse orientation).
\end{rk}

The differential $\partial$ counts holomorphic curves. To this effect, we define
\begin{equation*}
\mathcal{M}_i(\widehat{x}, \widehat{y}, H, J_t)\coloneqq 
\left\{u \colon \R \times [0,1] \to \Sym^k(\Sigma) \left\lvert
\begin{array}{lllll}
u(\R \times \{0\})\subset \Sym\underline L,\\
u(\R \times \{1\})\subset \Sym\Phi^1_H(\underline L), \\
\lim_{s\to -\infty} u(s+it)=x, \\
\lim_{s\to +\infty} u(s+it)=y\\ 
\widehat{x} \# u \sim \widehat{y}\\
    \frac{du}{ds}+J_t \frac{du}{dt}=0 \text{ and ind}(u)=i
\end{array}
\right. \right\}
\end{equation*}
The moduli space of finite energy $J_t$-holomorphic curves of Maslov index $i$ between the capped intersection points $\widehat{x}$ and $\widehat{y}$. Since $\R$ acts on the strip $\R \times [0,1]$ holomorphically by translation, we define the \textbf{unparametrized} moduli space by
\[\widehat{\mathcal{M}}_i(\widehat{x}, \widehat{y},H, J_t)=\faktor{\mathcal{M}_i(\widehat{x}, \widehat{y}, H, J_t)}{\R}\,.\]
On each generator of $CF_*(\Sym(\underline L),\Sym^k(H))$, the differential $\partial$ is defined by the formula
\[\partial [\widehat y] = \sum\limits_{[\widehat z]}n_{[\widehat y],[\widehat z]}[\widehat z],\]
where $n_{[\widehat y],[\widehat z]}$ is the algebraic count of curves in $\widehat{\mathcal{M}}_1(\widehat{y}, \widehat{z},H, J_t)$. It is then extended to the whole chain complex by linearity.

\medskip

By \cite{CGHMSS22}, we have $\partial^2=0$ and
\[HF_*(\Sym(\underline L),\Sym^k(H))\cong QH_*(\Sym(\underline L))\cong H_*(\Sym(\underline L))\otimes \Lambda,\]
where $QH_*(\Sym(\underline L))$ denotes the quantum homology of the Lagrangian $\Sym(\underline L)$, and $H_*(\Sym(\underline L))$ its Morse homology. Denote by $e$ the unit of $QH_*(\Sym(\underline L))$ (for the quantum product), and by $PSS$ the canonical isomorphism 
\[QH_*(\Sym(\underline L))\to HF_*(\Sym(\underline L),\Sym^k(H)).\]

The differential $\partial$ is action-decreasing, therefore, for $\alpha \in \R$, it gives rise to a well defined differential on the sub-vector space $CF^\alpha_*(\Sym(\underline L),\Sym^k(H))$ generated by the elements of action smaller than $\alpha$. Denote by 
\[\iota_\alpha \colon HF^\alpha_*(\Sym(\underline L),\Sym^k(H))\to HF_*(\Sym(\underline L),\Sym^k(H))\]
the map induced by the inclusion of the subcomplex. Then, we define the link spectral invariant 
$$c_{\underline L}(H)\coloneqq \dfrac 1 k \inf\left\{\alpha\in\R|PSS(e)\in Im(\iota_\alpha)\right\}\text{ .}$$
This spectral invariant is $L^{(1,\infty)}$-Lipschitz, and therefore extends to a map 
\[c_{\underline L}:C^\infty(S^1\times \Sigma)\to \R.\] 
We recall some of its properties, listed in \cite[Theorem 1.13]{CGHMSS22}.

\begin{prop}\label{prop.properties of link spec inv}
    For every $\eta$-monotone Lagrangian link $\underline L=\bigcup_{i=1}^k L_i$ on a symplectic surface $(\Sigma,\omega)$, the link spectral invariant $c_{\underline L}:C^\infty(S^1\times \Sigma)\to \R$ satisfies:
    \begin{itemize}
    \setlength\itemsep{-0.5pt}
        \item ($L^{(1,\infty)}$-Lipschitz) for any $H$, $H'$, \[\int_{S^1}\min(H_t-H'_t)dt\leq c_{\underline L}(H)-c_{\underline L}(H')\leq \int_{S^1}\max(H_t-H'_t)dt;\]
        \item (Monotonicity) if $H_t\leq H'_t$ then $c_{\underline L}(H)\leq c_{\underline L}(H')$;
        \item (Lagrangian control) if $H_t|_{L_i}=s_i(t)$ for each $i$, then \[c_{\underline L}(H)=\frac 1 k\sum_{i=1}^k\int_{S^1}s_i(t)dt;\] moreover for any $H$, \[\frac 1 k\sum_{i=1}^k\int_{S^1}\min\limits_{L_i}H_tdt\leq c_{\underline L}(H)\leq \frac 1 k\sum_{i=1}^k\int_{S^1}\max\limits_{L_i}H_tdt;\]
        \item (Support control) if $\supp(H_t)\subset \Sigma\setminus \underline L$, then $c_{\underline L}(H)=0$;
        \item (Subadditivity) $c_{\underline L}(H\# H')\leq c_{\underline L}(H)+c_{\underline L}(H')$;
        \item (Homotopy invariance) if $H,H'$ are mean-normalised and determine the same element of the universal cover $\widetilde{\Ham}(\Sigma)$, then $c_{\underline L}(H)=c_{\underline L}(H')$;
        \item (Shift) $c_{\underline L}(H+s(t))=c_{\underline L}(H)+\int_{S^1}s(t)dt$.
    \end{itemize}
\end{prop}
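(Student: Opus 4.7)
The plan is to reduce all seven properties to standard Floer-theoretic arguments applied to the filtered complex $CF_*^\alpha(\Sym(\underline L),\Sym^k(H))$, using that the action functional $\mathcal A_H$ transforms in a controlled way when we deform $H$, translate the reference Hamiltonian, or concatenate Hamiltonians via the $\#$-product. Throughout, the normalisation factor $1/k$ in the definition of $c_{\underline L}$ tracks the fact that $\Sym^k(H)$ sums $k$ evaluations of $H$.

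For the $L^{(1,\infty)}$-Lipschitz property I would build a continuation map between $CF_*(\Sym(\underline L),\Sym^k(H))$ and $CF_*(\Sym(\underline L),\Sym^k(H'))$ by counting $J_t$-holomorphic strips with a monotone homotopy $H_s$ from $H$ to $H'$; a standard energy estimate shows this map shifts filtration by at most $\int_{S^1}\max(H'_t-H_t)\,dt$ on the $\Sym^k$-level, which divided by $k$ gives exactly the claimed bound. Monotonicity is then immediate: for $H\le H'$ we can take $s_i=0$ in one direction. The shift property follows by direct inspection of $\mathcal A_H$: adding $s(t)$ to $H$ adds $k\int_{S^1} s(t)\,dt$ to $\Sym^k(H)$ (evaluated at any $k$-tuple) and leaves the capping term unchanged, so every spectral value shifts by $\int s(t)\,dt$ after the $1/k$ normalisation. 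Subadditivity comes from the pair-of-pants product $HF_*(H)\otimes HF_*(H')\to HF_*(H\#H')$, which is filtration-subadditive at the chain level and sends $PSS(e)\otimes PSS(e)$ to $PSS(e)$ since $e$ is a unit for the quantum product. Homotopy invariance of the spectral number on $\widetilde{\Ham}(\Sigma)$ is a classical consequence of the fact that two homotopic mean-normalised Hamiltonians produce canonically isomorphic filtered Floer complexes, the PSS image of $e$ being preserved by this isomorphism.

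The only genuinely link-specific statement is Lagrangian control, and this is the step I expect to be the main obstacle. If $H_t|_{L_i}=s_i(t)$, then $\Sym^k(H)$ restricted to $\Sym(\underline L)$ equals $\sum_i s_i(t)$, so $\Sym(\underline L)$ is invariant under $\Sym^k(\Phi^t_H)$ and the constant path based at any $x\in\Sym(\underline L)$ is a critical point of $\mathcal A_H$ with action $\tfrac{1}{k}\sum_i\int s_i(t)\,dt$ after the normalisation. To upgrade this from an obvious generator to a computation of the spectral invariant one needs that $PSS(e)$ is represented by such a constant orbit and not by a lower-action class; this is where one uses the isomorphism $HF_*(\Sym(\underline L),\Sym^k(H))\cong H_*(\Sym(\underline L))\otimes\Lambda$ together with the fact that $[\Sym(\underline L)]$ is the fundamental-class generator of the top degree piece, combined with the action-filtration interpretation of PSS. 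Perturbing $H$ by $\pm(\max H_t-\min H_t)$ away from the link and invoking the Lipschitz estimate then yields the two-sided inequalities for general $H$.

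Support control drops out as the special case $s_i\equiv 0$ of Lagrangian control. The main technical hazard running through all these arguments is the same one that complicates the definition of the complex in Section~\ref{subsec.def of spec inv}: the current $\omega_X$ fails to be smoothly symplectic along $\Delta$, so every compactness and continuation argument must be run with $(V,\omega,j)$-nearly symmetric almost complex structures and with open sets $V$ carefully chosen to avoid the Hamiltonian flow of $\Sym(\underline L)$ up to time $1$, and possibly also to accommodate the connect-sum divisors needed later. Provided one works, as in \cite{CGHMSS22}, with neighbourhoods $V$ disjoint from the relevant Lagrangian traces so that the positivity-of-intersection with $\Delta$ combines with Proposition~\ref{prop:monotonicity} to give the required energy control, all seven properties follow from the standard Hamiltonian-Floer playbook applied to $\Sym^k(\Sigma)$.
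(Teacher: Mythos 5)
The paper does not prove this proposition at all: it is quoted verbatim from \cite[Theorem 1.13]{CGHMSS22}, so there is no in-paper argument to compare against. Your outline reproduces the standard strategy of that reference (monotone continuation maps with energy estimates for the Lipschitz and monotonicity properties, direct inspection of $\mathcal A_H$ for the shift, the product structure and unitality of $e$ for subadditivity, and the action of constant orbits on the invariant Lagrangian for the control properties), so it is consistent with the source; just be aware that it remains a plan rather than a proof — in particular, under the hypothesis $H_t|_{L_i}=s_i(t)$ the flow preserves $\Sym(\underline L)$, so the Floer complex is not directly defined and the perturbation-and-limit step you gesture at for Lagrangian control is where the actual work of \cite{CGHMSS22} lies.
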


By the homotopy invariance and the shift property, $c_{\underline L}$ defines a map $\widetilde{\Ham}(\Sigma)\to \R$ given by 
\[c_{\underline L}(\widetilde{\varphi})\coloneqq  c_{\underline L}(H)-\int_{S^1}\int_\Sigma H_t \omega dt,\]
where $H$ is any Hamiltonian whose flow $\Phi^t_H$ represents $\widetilde\varphi$.
Then, when $\Sigma=S^2$, they show in \cite{CGHMSS22} that $c_{\underline L}:\widetilde{\Ham}(S^2)\to \R$ is a quasimorphism, whose homogenisation $\mu_{\underline L}$ descends to a quasimorphism on $\Ham(S^2)$.

\subsection{Holomorphic strips in Heegaard-Floer homology}\label{subsec.Holo strips in Heegaard-Floer}

In this section, we review how one can study the holomorphic strips that arise in the differential of the Floer complex defined earlier. In Section \ref{subsubsec.tautological correspondence}, we define the tautological correspondence. In Section \ref{subsubsec.domain}, we recall the definition of the domain of a holomorphic strip. In Section \ref{subsubsec.count sphere}, we show how the fact that the differential of some Floer complexes vanishes allows us to infer the count of some holomorphic curves. In Section \ref{subsubsec.holo punctured cylinders}, we state a lemma of Osv\'ath and Szab\'o to count holomorphic strips of index 1 that have a cylinder as a domain.

\subsubsection{Tautological correspondence}\label{subsubsec.tautological correspondence}

In this section, we recall the definition of the tautological correspondence. Let $\Sigma$ be a symplectic surface endowed with the almost complex structure $J$ and take $\underline{L}$ and $\underline{K}$ be two links with $k$ components each. Let $S$ denote the strip $\R\times [0,1]$, equipped with its standard complex structure. Each $\Sym^k(J)$-holomorphic curve 
\[u \colon (S,\partial S) \to (\Sym^k(\Sigma),\Sym^k(\underline{L})\cup \Sym^k(\underline{K}))\]
can be pull-backed by the following diagram.
\[
\begin{tikzcd}
(\widetilde{S},\partial \widetilde{S}) \arrow{r}{\widetilde{u}} \arrow[swap]{d}{\pi_{\widetilde{S}}}& (\Sigma^k, \pi^{-1}(\Sym^k(\underline{L})\cup \Sym^k(\underline{K}))) \arrow[swap]{d}{\pi}\\
(S,\partial S) \arrow{r}{u} & (\Sym^k(\Sigma),\Sym^k(\underline{L})\cup \Sym^k(\underline{K})),
\end{tikzcd}
\]
where $\pi \colon \Sigma^k \to \Sym^k(\Sigma)$ is the canonical $k!$-fold branched covering. The $k!$-fold branched covering $(\widetilde{S},\partial \widetilde{S})$ of $(S ,\partial S)$ and $\widetilde{u}$ are $\mathfrak{S}_k$-invariant. Let $\pi_1 : \Sigma^k \to \Sigma$ be the projection on the first factor. It is invariant under the action of $\mathfrak{S}_{k-1}$ by permutation of the $k-1$ last factors. Thus, the map $\pi_1 \circ \widetilde{u}$ induces a $J$-holomorphic map
\[\widehat{u} \colon (\widehat{S}, \partial \widehat{S}) \to (\Sigma,\underline{L}\cup \underline{K}),\]
where $\widehat{S}= \widetilde{S}/\mathfrak{S}_{k-1} \to S$ is a holomorphic $k$-fold branched covering of the strip $S$. Conversely, one can recover the map $u$ from the data of the $k$-fold branched covering $\widehat S$ and the map $\widehat u$: for $z\in S$, $u(z)$ is the $k$-tuple consisting of the images by $\widehat u$ of the $k$ preimages of $z$ via the covering map (where some elements are repeated if $z$ is a branching point). This construction is called the \textbf{tautological correspondence}. The tautological correspondence allows us to see a holomorphic curve in the symmetric product of a surface as a holomorphic curve on the surface itself. The drawback is the appearance of genera in $(\widehat{S}, \partial \widehat{S})$ and a more convoluted boundary decomposition. We point out that such a correspondence exists only when taking a product almost complex structure on the symmetric product. As such an almost complex structure is not generic, we might not have transversality for the holomorphic curves, and the tautological correspondence can only take us so far.

\subsubsection{Domain of holomorphic strips}\label{subsubsec.domain}

In this section, we set a notation for homotopy classes of strips and define their domain. We are again considering two transverse links $\underline L$ and $\underline K$ with $k$ components each inside a symplectic surface $\Sigma$.

For two intersection points $x$ and $y$ between $\underline L$ and $\underline K$, we denote by $\pi_2(x,y)$ the set of homotopy classes of cappings between the constant path at $x$ and the constant path at $y$. One can define a domain for such classes (and more generally for any element of $H_2(\Sym^k (\Sigma),\Sym(\underline L)\cup \Sym (\underline K))$), in the following way.

Denote by $\mathcal D_1,\ldots,\mathcal D_s$ the connected components of $\Sigma\setminus ((\underline L)\cup (\underline K))$. For $1\leq i \leq s$, fix a point $z_i$ in $\mathcal D_i$, and for $\varphi\in H_2(\Sym^k (\Sigma),\Sym(\underline L)\cup \Sym (\underline K))$, let $n_{z_i}(\varphi)$ be the intersection number between $\varphi$ and the codimension $2$ divisor $\{z_i\}\times \Sym^{k-1}(\Sigma)\subset \Sym^k (\Sigma)$.
\begin{dfn}
    Given a homology class $\varphi\in H_2(\Sym^k (\Sigma),\Sym( \underline L)\cup \Sym (\underline K))$, its domain is defined as the formal sum 
    $$\mathcal D(\varphi)=\sum\limits_{i=1}^s n_{z_i}(\varphi)\cdot \mathcal D_i \text{ .}$$
\end{dfn}

We also fix the following notation for the rest of the paper. For two intersection points $x$ and $y$ between $\underline L$ and $\underline K$, a class $\varphi\in H_2(\Sym^k (\Sigma),\Sym( \underline L)\cup \Sym (\underline K))$, and an almost complex structure $J$ on $\Sym^k(\Sigma)$, we denote by $\mathcal M_J(x,y,\varphi)$ the moduli space of Maslov index 1 $J$-holomorphic strips from $x$ to $y$ that lie in the class $\varphi$, quotiented by the $\R$-action by translation.

\subsubsection{Curve count for the link with 2 components on the sphere and minimal number of intersection points}
\label{subsubsec.count sphere}

We use the fact that the differential of a Floer complex vanishes for small perturbation of a two-component link on the sphere in order to count the holomorphic cylinders that one sees directly on the sphere.

\medskip

Let $\underline{L}=L_0 \cup L_1$ be an $\eta$-monotone link on the sphere $S^2$. We denote by $\underline{L}'=L_0'\cup L_1'$ a Hamiltonian deformation of $\underline{L}$ under a $C^\infty$-small Hamiltonian $H$. Moreover, we assume that all intersection points are transverse and that there are only 4 of them. For $i=0,1$, we denote the intersection points of $L_i$ and $L_i'$ as $p_i$ and $q_i$ in such a way that there are two strips from $p_i$ to $q_i$ lying in a small tubular neighbourhood of $L_i$ containing $L_i'$, as in Figure \ref{fig:link_two_comp}.

\begin{figure}
    \centering
    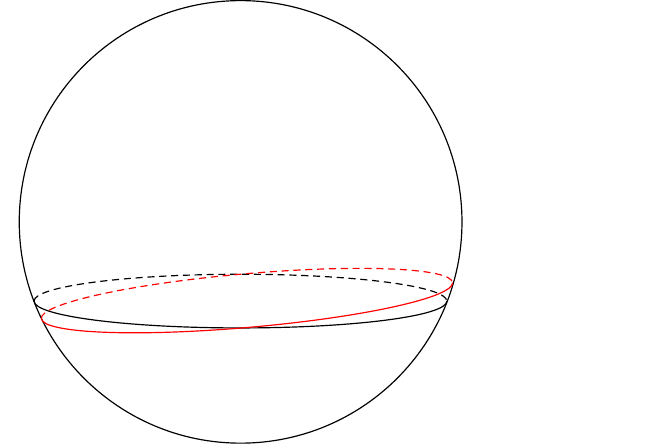
    \caption{The links $\underline L$ and $\underline L'$.}
    \label{fig:link_two_comp}
\end{figure}

We denote by $T$ the Novikov variable and by $\Lambda$ the Novikov ring. We choose a class of capping for $[p_0,p_1]$, from which we get classes of cappings for $[p_0,q_1]$, $[q_1,p_0]$ and $[q_0,q_{1}]$ by adjoining one or two strips from $p_i$ to $q_i$ lying in a small tubular neighbourhood of $L_i$. By Remark \ref{rk:recapping_with_discs}, any other class of capping for those points can be attained by recapping by Maslov index 2 discs. Then, the Floer complex $CF_*(\Sym^2(\underline{L}), \Sym^2(\underline{L}'),J)$ is generated as a $\Lambda$-module by $[p_0,p_1]$, $[p_0,q_1]$, $[q_1,p_0]$ and $[q_0,q_{1}]$ equipped with the cappings above, which we omit from the notation. The action of the Novikov variable $T$ corresponds to adjoining a Maslov index 2 disc to the capping. Since the perturbation is $C^\infty$-small, the differential $\partial$ of the Floer complex vanishes identically. In this case, it is possible to achieve transversality with a symmetric complex structure (see \cite[Proposition 3.9]{OS}).

We describe in more detail the curves that contribute to the differential when $J=\Sym^2(j)$ for a generic complex structure $j$ on $S^2$. By tautological correspondence, it is enough to study holomorphic maps from $2$-to-$1$ holomorphic coverings of the strip to $S^2$ with Lagrangian boundary conditions. The $2$-to-$1$ holomorphic coverings of the strip can be of the following type:
\begin{itemize}
\setlength\itemsep{-0.5pt}
    \item two disjoint copies of the strip;
    \item a cylinder with two punctures on each boundary component;
    \item a disc with four punctures.
\end{itemize}

\noindent Since $L_0$ does not intersect $L_1'$, there can be no map from a four-punctured disc to $S^2$ satisfying the required boundary conditions.
Therefore, a Maslov index $1$ strip contributing to the differential has to be a product of an index $1$ $j$-holomorphic strip in $S^2$ with a constant strip, or corresponds tautologically to a map from a cylinder.
The first category yields two strips from $[p_i,*]$ to $[q_i,*]$ (where $*=p_{1-i}$ or $q_{1-i}$) that cancel each other out, as well as a strip from $[q_i,*]$ to $T\cdot [p_i,*]$.
We denote by $\mathcal D_i$ and $\mathcal D_i'$ the connected components of $S^2\setminus (\underline L\cup \underline L')$ corresponding to the domains of the two strips from $p_i$ to $q_i$, by $\mathcal D_i''$ the one corresponding to the strip from $q_i$ to $p_i$, and by $\mathcal D_2$ the cylindrical connected component.
Then, a Maslov index $1$ strip tautologically corresponding to a cylinder must have for domain $\mathcal D_2+\mathcal D_i$ or $\mathcal D_2+\mathcal D_i'$ for $i=0$ or $1$. Let $\varphi_i,\varphi_i'\in \pi_2([q_i,*],[p_i,*])$ be the homotopy classes of strips whose domains are $\mathcal D_2+\mathcal D_i$ and $\mathcal D_2+\mathcal D_i'$ respectively.
Then, since $\partial = 0$, we get that $\#\mathcal M_J([q_i,*],[p_i,*],\varphi_i)+\#\mathcal M_J([q_i,*],[p_i,*],\varphi_i')=\pm 1$, with opposite sign to the curve corresponding to the domain $\mathcal D_i''$.

This count can be obtained in a more direct way by studying when punctured annuli provide holomorphic coverings of the strip (cf. \cite[Lemma 9.3 and proof of Lemma 9.4]{OS}). We now give a quick overview of this analysis.

\subsubsection{Holomorphic punctured cylinders}\label{subsubsec.holo punctured cylinders}

In this section, we state a lemma proven in \cite{OS} that counts holomorphic strips whose domain is a cylinder.

\medskip

Consider two circles $L_1$ and $L_2$ that bound an annulus inside a surface $\Sigma$. Denote $\underline L\coloneqq L_1\cup L_2$, and let $\underline L'=L'_1\cup L'_2$ be a small perturbation of $\underline L$ such that $L_i'$ intersects $L_i$ at exactly two points for $i=1,2$. Denote by $\mathcal D_1$ and $\mathcal D_2$ the two open strips delimited by $L_1$ and $L_1'$, and by $\mathcal D_3$ the annulus connected component of $\Sigma\setminus (\underline L\cup \underline L')$. We can assume that $\mathcal D_1$ and $\mathcal D_3$ are separated by an arc of $L_1$, while $\mathcal D_1$ and $\mathcal D_3$ are separated by an arc of $L_1'$. We denote by $p_i$ and $q_i$ the intersection points of $L_i$ and $L_i'$, in a way that the two small strips going from $p_i$ to $q_i$ have their lower boundary on $L_i$ and their upper boundary on $L_i'$ (see Figure \ref{fig:cylinder}).

\begin{figure}
    \centering
    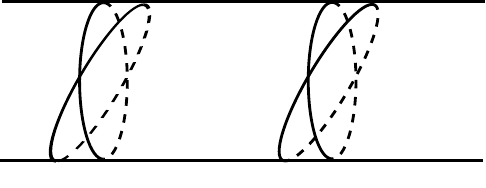
    \caption{The domains $\mathcal D_1$, $\mathcal D_2$ and $\mathcal D_3$}
    \label{fig:cylinder}
\end{figure}

Let $\varphi_1$ and $\varphi_2$ be the relative homology classes whose domains are $\mathcal D_1+\mathcal D_3$ and $\mathcal D_2+\mathcal D_3$ respectively. Let $j$ be an almost complex structure on $\Sigma$. For $x,y\in \Sym (\underline L) \cap \Sym(\underline L')$, and $\varphi \in H_2(\Sym^k (\Sigma),\Sym (\underline L)\cup \Sym (\underline L'))$, recall that $\mathcal M_{\Sym^2(j)}(x,y,\varphi)$ denotes the moduli space of Maslov index 1 $\Sym^2(j)$-holomorphic strips in $\Sym^2(\Sigma)$, in the class $\varphi$, that go from $x$ to $y$, with lower boundary on $\Sym (\underline L)$ and upper boundary on $\Sym (\underline L')$, quotiented by the $\R$-action by translation.

Then, the following is showed in the course of the proof of \cite[Lemma 9.4]{OS}.

\begin{lemma}\label{lem.count of cylinders as in OS}
    For $r_1\in\{p_1, q_1\}$, for $x=[r_1,q_2]$ and $y=[r_1,p_2]$, we have \[\#\mathcal M_{\Sym^2j}(x,y,\varphi_1)+\#\mathcal M_{\Sym^2j}(x,y,\varphi_2)=\pm 1.\]
\end{lemma}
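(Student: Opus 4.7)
The approach is to invoke the tautological correspondence and reduce the count of holomorphic strips in $\Sym^2(\Sigma)$ to a count of holomorphic maps from a branched $2$-to-$1$ cover of the strip, essentially following the strategy of \cite{OS}.

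First, I would analyse the topology of the domain curve. A $\Sym^2(j)$-holomorphic strip $u \colon S \to \Sym^2(\Sigma)$ in one of the classes $\varphi_1, \varphi_2$ lifts tautologically to a $j$-holomorphic map $\widehat u \colon \widehat S \to \Sigma$, where $\widehat S$ is a degree $2$ branched cover of $S$. The multiplicities $1$ on $\mathcal D_3$ (annulus) and $1$ on $\mathcal D_i$ (strip region) together with the Maslov index $1$ constraint force, via the standard Euler-measure/index formula for domains, $\widehat S$ to be topologically an annulus with one marked end mapping to $x = [r_1, q_2]$ and the other to $y = [r_1, p_2]$. The component of $\widehat u$ on the $\mathcal D_3$-piece is a biholomorphism onto $\mathcal D_3$, while the component on the $\mathcal D_i$-piece is a strip with boundary on $L_2$ and $L_2'$ going from $q_2$ to $p_2$ (the constant factor $r_1$ contributes one trivial sheet via the tautological correspondence).

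Second, I would parametrise the resulting curves by the conformal modulus of the annular domain. The space of abstract conformal structures on the punctured annulus $\widehat S$ is a one-parameter family indexed by $\alpha \in (0, \infty)$. For each $\alpha$ one attempts to realise $\widehat S$ as the source of a $j$-holomorphic map to $\Sigma$ whose image annulus is exactly $\mathcal D_3$ with the strip piece $\mathcal D_i$ attached. The image annulus $\mathcal D_3 \subset \Sigma$ has a fixed conformal modulus $m_3$ determined by $(\Sigma, j)$, and the requirement that the map be holomorphic and send the appropriate boundary arc of $\widehat S$ to the $L_i \cup L_i'$ portion pins down the abstract modulus $\alpha$ uniquely for each of the two gluing patterns ($\varphi_1$ versus $\varphi_2$), by a uniformisation/doubling argument.

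Third, I would conclude by a combined count. As in \cite[Lemma 9.3]{OS}, one can view the two moduli spaces $\mathcal M_{\Sym^2 j}(x,y,\varphi_1)$ and $\mathcal M_{\Sym^2 j}(x,y,\varphi_2)$ as the two ends of a single one-parameter family of cylindrical configurations: degenerating the modulus $\alpha$ in one direction pinches off an arc on $L_1$ (producing a curve in class $\varphi_1$), while degenerating in the other direction pinches off an arc on $L_1'$ (producing a curve in class $\varphi_2$). A signed count over this oriented interval yields exactly one curve on one side and none on the other, or one on each side with cancelling contributions coming from the boundary; in either case the algebraic sum is $\pm 1$. The main obstacle is the careful sign and orientation accounting, which is controlled by the fact that the total differential $\partial$ on the unperturbed link $\underline L$ vanishes, together with the contribution of the already-identified index $1$ strips with domain $\mathcal D_i''$, so the cylindrical contributions must cancel the latter up to sign.
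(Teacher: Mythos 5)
Your overall strategy --- tautological correspondence, then a one-parameter family of annular configurations whose two ends correspond to the classes $\varphi_1$ and $\varphi_2$, closed off by an intermediate-value-type signed count following \cite[Lemmas 9.3 and 9.4]{OS} --- is the same as the paper's. However, your first step misdescribes the curves. A strip in class $\varphi_i$ does \emph{not} split into a biholomorphism onto $\mathcal D_3$, a separate strip over $\mathcal D_i$, and a trivial sheet at $r_1$. The domain $\mathcal D_i + \mathcal D_3$ is a single embedded annulus with a slit (the complement of a slit along $L_1$, resp.\ $L_1'$, inside the annulus bounded by the remaining three curves), and the tautological source $\widehat S$ is a \emph{connected} annulus with two punctures on each boundary circle, a $2$-fold branched cover of the strip, mapping essentially bijectively onto this slit annulus; the point $r_1$ occurs only as the asymptotic limit at two of the four punctures, not as a constant sheet. (A disconnected cover with a trivial sheet would force the other component to be a disc, which cannot carry the annular domain $\mathcal D_i+\mathcal D_3$.) Note also that in the setting of Section \ref{subsubsec.holo punctured cylinders} the strips $\mathcal D_1,\mathcal D_2$ lie between $L_1$ and $L_1'$, not between $L_2$ and $L_2'$.

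The more serious gap is in your second step: the claim that the conformal modulus is ``pinned down uniquely for each of the two gluing patterns'' would produce exactly one curve in each class, hence a total count of $0$ or $\pm 2$, contradicting the statement you are proving. The existence of a holomorphic map $v$ from a punctured annulus onto the slit annulus is automatic for every slit length $t$ (uniformisation); what is \emph{not} automatic is the existence of the holomorphic $2$-fold branched covering $\widehat S\to S$ with the prescribed boundary behaviour, which by \cite[Lemma 9.3]{OS} holds if and only if the two arcs mapped to $\underline L$ sweep equal angles. This is the codimension-one condition $f_i(t)=0$ on the one-parameter family of slit annuli; the paper's argument is that $f_1(0)<0$, $f_2(0)>0$ and $\lim_{t\to 1}f_1(t)=\lim_{t\to 1}f_2(t)\neq 0$, so that the \emph{combined} signed count of zeros over the two families is $\pm 1$. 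Finally, your appeal to the vanishing of $\partial$ to fix signs is circular here: the point of this lemma is to obtain the count directly, and moreover $\partial=0$ was only established in Section \ref{subsubsec.count sphere} for the two-component link on the sphere, not in the general surface setting in which this lemma is stated and later used.
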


\begin{proof}By tautological correspondence, a strip in $\mathcal M_{\Sym^2j}(x,y,\varphi_1)$ corresponds to a holomorphic $2$-fold branched cover of the strip $\widehat S$, together with a holomorphic map $v:\widehat S\to \Sigma$ whose image represents the class $\varphi_1$. Therefore, $\widehat S$ has to be a punctured annulus of the form $\{z\in \C, \frac 1 r \leq \vert z \vert \leq r\}$ with two punctures on each boundary component, for some $r\in (1,\infty)$. For $i=1,2$, denote by $\xi_i$ the arc of $\widehat S$ mapped to $L_i$ by $v$. Then, there exists a holomorphic $2$-fold branched covering $\widehat S\to S$ mapping $\xi_1$ and $\xi_2$ to the lower boundary if and only if there exists an involution of $\widehat S$ swapping $\xi_1$ and $\xi_2$, which by \cite[Lemma 9.3]{OS} happens if and only if $\xi_1$ and $\xi_2$ sweep the same angle.

Now, the image of the interior of $\widehat S$ by $v$ must be an embedded annulus, which is the complement of a slit along $L_1$ inside the annulus bounded by $L_1'$, $L_2$ and $L_2'$, as in Figure \ref{fig:slit_cylinder}.

\begin{figure}
    \centering
    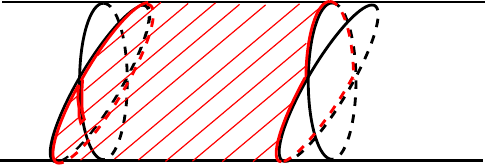
    \caption{In red, the image of the interior of $\widehat S$ by $v$. We point out to the reader that the boundary (in thick) red of $v$ has a \textit{slit}.}
    \label{fig:slit_cylinder}
\end{figure}

We can parametrize the length of the slit by $t\in [0,1)$, and denote the corresponding annulus by $A_t$. Let $f_1(t)$ be the difference of the angle swept by $\xi_1$ and the angle swept by $\xi_2$ after uniformising $A_t$ to an annulus of the same form as $\widehat S$. Then, there exists a holomorphic branched covering of the strip $\widehat S$, and a holomorphic map $v$ mapping the interior of $\widehat S$ to $A_t$ if and only if $f_1(t)=0$. Similarly for $\varphi_2$, we consider domains $B_t$ that are the complement of a slit along $L_1'$ inside the annulus bounded by $L_1$, $L_2$ and $L_2'$, and build a function $f_2(t)$ such that there exists a corresponding pair $(\widehat S,v)$ if and only if $f_2(t)=0$.

Finally, they explain in the proof of \cite[Lemma 9.4]{OS} that when the links and almost complex structure are generic, the two functions $f_1$ and $f_2$ are continuous, with $f_1(0)<0$, $f_2(0)>0$, and
\[\lim\limits_{t\to 1}f_1(t)=\lim\limits_{t\to 1}f_2(t)\neq 0,\]
which implies that the signed number of zeros of $f_1$ minus the signed count of zeros of $f_2$ is $\pm 1$, which by the previous discussion agrees with 
\[\#\mathcal M_{\Sym^2j}(x,y,\varphi_1)+\#\mathcal M_{\Sym^2j}(x,y,\varphi_2).\]
\end{proof}


\section{Proof of the main theorem}\label{sec.proof of main}

In this section, we give a proof of Theorem \ref{thm.main thm} reducing it to the proof of two technical lemmas of Section \ref{sec.neck-stretching and constrains}. The section is divided into different subsections. In Section \ref{subsec.setting and notations} we set notations that are used throughout Sections \ref{sec.proof of main} and \ref{sec.neck-stretching and constrains}, then give the proofs of Theorem \ref{thm.main thm} and Proposition \ref{prop.reducing to comp supp Ham}. In Section \ref{subsec.proof of prop 4.4}, we prove Proposition \ref{prop.eq of spec inv on diff spheres} that relates, for some special Hamiltonian diffeomorphisms, the spectral invariants from two links with two components and same monotonicity constant but on spheres of different areas. In Section \ref{subsec.almost Künneth formula}, we prove an almost Künneth formula for Link Floer homology that allows us to derive some inequality on the spectral invariants. After homogenising, this inequality is enough for us to prove the equality of Theorem \ref{thm.main thm}.

\subsection{Prolegomena}\label{subsec.setting and notations}

In this section, we fix some notations that we use in our proof of Theorem \ref{thm.main thm}. The real number $\lambda \in [1/3,1/2)$ as in Theorem \ref{thm.main thm} will be fixed from now on. 

\noindent In what follows, we denote $\mathscr{S}_3$ the sphere $S^2(1)$ of area 1 endowed with a Lagrangian link $\underline{L}$ composed of three Lagrangians $L_0$, $L_1$ and $L_2$. The first Lagrangian $L_0$ is an equator while $L_1$ and $L_2$ are two Lagrangians that bound each a disc of area $\lambda$ in the northern hemisphere and southern hemisphere respectively, see the left side of Figure \ref{fig:sphere_maps}. In general, the northern hemisphere is the open disc bounded by the Lagrangian $L_0$ represented at the top of the figures. Vice versa for the southern hemisphere.

\noindent This implies that the link $\underline{L}$ is an $\eta$-monotone link with $\eta=(4\lambda-1)/4$ and monotonicity constant $\lambda$. Moreover, one can see that since $\lambda\geq1/3$, the 2-component link $L_1 \cup L_2$ is an $\eta'$-monotone link with $\eta'=(3\lambda-1)/2$ and monotonicity constant $\lambda$. We can now define three spectral invariants $c_1$, $c_2$ and $c_3$ on the space of smooth Hamiltonians on the sphere as follows:
\begin{itemize}
\setlength\itemsep{-0.5pt}
    \item[$\bullet$] The spectral invariant $c_1$ comes from the link with one component $L_0$, (whose homogenisation is the classical Calabi quasimorphism defined by Entov and Polterovich \cite{EP});
    \item[$\bullet$] The spectral invariant $c_2$ comes from the link with 2 components $L_1 \cup L_2$;
    \item[$\bullet$] Finally, the spectral invariant $c_3$ comes from the link with 3 components $L_0 \cup L_1 \cup L_2$.
\end{itemize}
The homogenisations of the spectral invariants $c_1$, $c_2$ and $c_3$ are, by definition, $\mu_{1,1/2}$, $\mu_{2,\lambda}$ and $\mu_{3,\lambda}$. In other words, the quasimorphism $r_\lambda$ of Theorem $\ref{thm.main thm}$ is the homogenisation of the quasimorphism $3c_3-2c_2-c_1$. It is well-known from \cite{CGHMSS22} that $r_\lambda$ descends to a homogeneous quasimorphism on $\Ham(S^2(1))$. Let us further motivate the definition of this quasimorphism. In fact, from the Lagrangian control property, the quasimorphism $3c_3-2c_2-c_1$ vanishes for every Hamiltonian $H$ on the sphere that is constant along $L_0$, $L_1$ and $L_2$ at all times. Hence, the same property holds for the homogenisation and $r_\lambda(H)=0$ for any such $H$. By some clever manipulations of the Reeb tree of an autonomous Hamiltonian, Buhovsky et al. \cite{BFPS} managed to prove that the quasimorphisms $r_\lambda$ vanish on all autonomous Hamiltonian diffeomorphisms by bringing this last case to the case of Hamiltonians that are constant along $L_0$, $L_1$ and $L_2$.

\medskip

In order to prove Theorem \ref{thm.main thm}, we will prove the following proposition.

\begin{prop}\label{prop.reducing to comp supp Ham}
The quasimorphism $r_\lambda$ vanishes on Hamiltonian diffeomorphisms supported in the interior of the northern hemisphere.
\end{prop}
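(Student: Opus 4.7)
The plan is to establish a uniform bound $|R(K)|\leq C$, with the constant $C$ depending only on the links and not on $K$, for every smooth mean-normalised Hamiltonian $K$ supported in the interior of the northern hemisphere, where $R(K)\coloneqq 3c_3(K)-2c_2(K)-c_1(K)$. Once this bound is available, the vanishing of $r_\lambda(\varphi_H)$ is automatic: the flow of $H$ fixes the equator, hence the generating Hamiltonian $H^{\#n}$ of $\varphi_H^n$ stays supported in the interior of the northern hemisphere, and the homogenisation formula forces $r_\lambda(\varphi_H)=\lim_n R(H^{\#n})/n=0$.

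The support control property of Proposition \ref{prop.properties of link spec inv} gives $c_1(K)=0$ for free, so the task reduces to a uniform bound on $|3c_3(K)-2c_2(K)|$. The key observation is the area identity $(1-2\lambda)/2=1/2-\lambda$, which shows that the monotone two-component link $L_0\cup L_1$ on the sphere $\mathscr{S}_N$ of area $1/2+\lambda$, obtained by capping off the northern hemisphere of $S^2(1)$ with a disc of area $\lambda$, is \emph{identical} as a monotone link configuration (with the same monotonicity constant $\lambda$ and bulk parameter $\eta=(4\lambda-1)/4$) to the link on $S^2(1/2+\lambda)$ targeted by Proposition \ref{prop.eq of spec inv on diff spheres}. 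The hypotheses of that proposition being satisfied by $K$, since its support lies in the complement of a neighborhood of the ``southern'' link component on $S^2(1/2+\lambda)$, we obtain
\[
2c_2(K)=2c_{L_0\cup L_1}^{\mathscr{S}_N}(K)+\mathrm{const}.
\]

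In parallel, the almost Künneth formula of Section \ref{subsec.almost Künneth formula}, applied to the 3-link $L_0\cup L_1\cup L_2$ after a neck-stretching along the equator $L_0$, splits the Floer complex up to bounded error as the tensor product of the Floer complex of $L_0\cup L_1$ on $\mathscr{S}_N$ and that of the monotone 1-component link $L_2$ on the southern capped sphere $\mathscr{S}_S$ of area $2\lambda$. Since $K$ vanishes on $\mathscr{S}_S$, the second factor contributes only the constant $c_{L_2}^{\mathscr{S}_S}(0)$, and additivity of spectral invariants under the tensor product yields
\[
3c_3(K)=2c_{L_0\cup L_1}^{\mathscr{S}_N}(K)+c_{L_2}^{\mathscr{S}_S}(0)+O(1).
\]
Subtracting the two relations, the $K$-dependent term $c_{L_0\cup L_1}^{\mathscr{S}_N}(K)$ cancels and $|3c_3(K)-2c_2(K)|$ is uniformly bounded, as required.

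The main technical obstacle is establishing the almost Künneth formula itself in the quantitative Heegaard Floer setting. One must neck-stretch the almost complex structure on $\Sym^3(S^2)$ along $L_0$, analyse the broken $J$-holomorphic strips that arise in the SFT limit, and verify that the induced map on filtered Floer complexes introduces at most a bounded Hofer-additive defect on spectral invariants. The tautological correspondence and index computations collected in Section \ref{subsec.Holo strips in Heegaard-Floer}, and especially the cylinder count of Lemma \ref{lem.count of cylinders as in OS}, are the main tools for this analysis, which is deferred to Section \ref{sec.neck-stretching and constrains}.
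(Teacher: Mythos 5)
Your overall architecture matches the paper's: kill $c_1$ by support control, identify $c_2$ with a spectral invariant on the capped sphere $S^2(1/2+\lambda)$ by an exact chain-level transfer (this is Proposition \ref{prop.eq of spec inv on diff spheres}), and compare $3c_3$ with the tensor product of the two smaller complexes via neck-stretching (Section \ref{subsec.almost Künneth formula}). The reduction from a uniform bound on the non-homogenised combination to the vanishing of $r_\lambda$ is also sound. However, there is a genuine gap in the step where you claim
\[
3c_3(K)=2c_{L_0\cup L_1}^{\mathscr{S}_N}(K)+c_{L_2}^{\mathscr{S}_S}(0)+O(1)
\]
by ``additivity of spectral invariants under the tensor product.'' The map $g$ of Proposition \ref{prop.iso of filtered module S1 S2 and S2} is only an isomorphism of \emph{filtered $\Lambda$-modules}; it is not known to be a chain map, and the diagram (\ref{eq.almost commutative diag}) is not known to commute even up to homotopy. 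What the neck-stretching analysis actually delivers (Proposition \ref{prop.maps of fund class is fund class}) is that one specific element $g(x\otimes p_2')$ is a cycle whose class is sent to the fundamental class by the continuation map; exhibiting one such representative only bounds $c_3$ \emph{from above} by its action, giving the one-sided inequality of Proposition \ref{prop.hard ineq of spec inv}. The reverse inequality — that the fundamental class of the three-component complex admits no representative of substantially smaller action — is precisely what a filtered Künneth equality would require, and it is not available (note also the paper's caveat that the filtered Künneth formula is subtle because finite bars do not have projective resolutions). So your two-sided bound $|R(K)|\leq C$ is unjustified; only $R(K)\leq C$ follows.

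The gap is repairable without new analysis: the one-sided bound already gives $r_\lambda(\Phi_H^1)\leq 0$ for every $H$ supported in the interior of the northern hemisphere, and since $\overline H$ is also supported there, $r_\lambda(\Phi_H^1)=-r_\lambda(\Phi_{\overline H}^1)\geq 0$, whence the vanishing. This inverse trick is exactly how the paper closes the argument, and your write-up omits it because you assumed the two-sided estimate. Two smaller imprecisions: the neck is stretched along a circle lying strictly \emph{between} $L_0$ and $L_2$ (so that $L_0$ lands in the northern capped sphere and $L_2$ in the southern one), not along the equator $L_0$ itself; and the transfer of $c_2$ is an exact equality with no additive constant, though this does not affect your estimate.
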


Note that for a Hamiltonian $H$ as in Proposition \ref{prop.reducing to comp supp Ham}, $\mu_{1,1/2}=0$, that is, we actually need to show that $3\mu_{3,B}=2\mu_{2,B}$. Theorem \ref{thm.main thm} becomes a corollary of this proposition.

\begin{proof}[Proof of Theorem \ref{thm.main thm}]
Let $\varphi \in \Ham(\mathscr{S}_3)$ be a Hamiltonian diffeomorphism that fixes $L_0$ as a set, we want to show that $\lvert r_\lambda(\varphi) \rvert$ is bounded by some constant $C>0$ that does not depend on $\varphi$. Then, since $r_\lambda$ is homogeneous and for all $n \in \N$, $\varphi^n$ also preserves $L_0$, we would have 
\[\lvert n r_\lambda(\varphi)\rvert =  \vert r_\lambda(\varphi^n)\rvert \leq C,\]
which in turn would imply that $r_\lambda(\varphi)=0$.

\medskip

Let $R$ be any Hamiltonian diffeomorphism of $\mathscr{S}_3$ that sends $L_0$ to $L_0$ but reverses its orientation (take, for instance, a solid rotation of the sphere of angle $\pi$ around an axis contained in the equator plane). Then, there exists an integer $i$ equal to 0 or 1 such that $R^i \circ \varphi$ not only maps $L_0$ to itself, but also preserves its orientation. For a small $\varepsilon >0$, we can find $\psi_\varepsilon \in \Ham(\mathscr{S}_3)$ such that its Hofer norm is bounded by $\varepsilon$ and $\psi_\varepsilon \circ R^i \circ \varphi$ is equal to the identity in some neighbourhood of $L_0$. In this way the diffeomorphism $\psi_\varepsilon \circ R^i \circ \varphi$ is the composition of two diffeomorphisms $\varphi_N$ and $\varphi_S$ supported, respectively, in the northern hemisphere and the southern hemisphere. Now any area-preserving diffeomorphism of the disc is a Hamiltonian diffeomorphism of the disc and in particular of the sphere. This implies, together with Proposition \ref{prop.reducing to comp supp Ham} that $r_\lambda(\varphi_N)=r_\lambda(\varphi_S)=0$. Now,
\[\lvert r_\lambda(\varphi)\rvert=\lvert r_\lambda(R^{-i}\circ \psi_\varepsilon^{-1}\circ \varphi_N \circ \varphi_S)\lvert \leq 3D+ \lvert r_\lambda(R)\rvert +\varepsilon.\]
This is the constant bound that we wanted. This inequality finishes the proof of Theorem~\ref{thm.main thm}.
\end{proof}

We now outline the proof of Proposition \ref{prop.reducing to comp supp Ham} and set some more notations on top of the ones at the beginning of the section. Our goal is to relate the value of the spectral invariants $c_3$ on $\Ham(\mathscr{S}_3)$ with two other spectral invariants obtained by ``splitting" the sphere $\mathscr{S}_3$ between the Lagrangians $L_0$ and $L_2$ to obtain two spheres $\mathscr{S}_2$ and $\mathscr{S}_1$.

\medskip

More precisely, on the one hand, let $\mathscr{S}_2$ be the sphere $S^2(1/2+\lambda)$ endowed with the two Lagrangians $L_1'$ and $L_0'$ bounding each a disc of area $\lambda$. For the convenience of the reader and ours, we moreover assume that $L_1'$ is in the northern hemisphere and $L_0'$ is in the southern hemisphere. On the other hand, the sphere $\mathscr{S}_1$ is the sphere $S^2(2\lambda)$ endowed with one Lagrangian $L_2'$ that is the equator of this sphere. We point out that the links with $1$, $2$ and $3$ components on the spheres $\mathscr{S}_1$, $\mathscr{S}_2$ and $\mathscr{S}_3$, respectively, have the same monotonicity constant $\lambda$. The direct consequence of this fact is that the formal variable $T$ of the Novikov ring acts on their respective Lagrangian Floer complexes by decreasing the action by $\lambda$. This will play a crucial role in what follows.

\noindent Let $D_N \subset \mathscr{S}_3$ denote the northern hemisphere, that is the open disc of area $1/2$ bounded by $L_0$ and that contains $L_1$ and let $D_2\subset \mathscr{S}_3$ be the open disc of area $\lambda$ bounded by $L_2$. Moreover, we pick four symplectomorphic neighbourhoods $\mathcal{N}(L_0)$, $\mathcal{N}(L_2)$, $\mathcal{N}(L_0')$ and $\mathcal{N}(L_2')$ of $L_0$, $L_2$, $L_0'$ and $L_2'$ respectively as well as a symplectomorphism $f \colon \mathcal{N}(L_0) \to \mathcal{N}(L_2)$ that sends $L_0$ to $L_2$. We take a smooth area-preserving embedding
\[\psi \colon D_N \cup \mathcal{N}(L_0) \to \mathscr{S}_2\]
that maps $L_0$ to $L_0'$, $L_1$ to $L_1'$ and $\mathcal{N}(L_0)$ to $\mathcal{N}(L_0')$ as well as a smooth area-preserving embedding 
\[\theta \colon D_2 \cup \mathcal{N}(L_2) \to \mathscr{S}_1\]
that maps $L_2$ to $L_2'$ and $\mathcal{N}(L_2)$ to $\mathcal{N}(L_2')$. These maps allow us to mirror any construction that we make on $\mathscr{S}_3$ to constructions on $\mathscr{S}_2$ and $\mathscr{S}_1$.

\begin{figure}[ht]
    \centering
    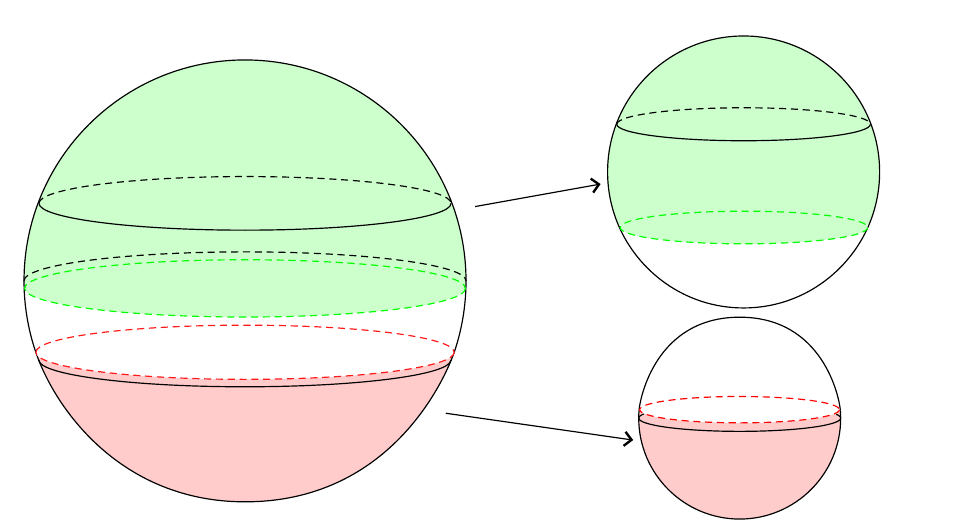
    \caption{The maps $\psi$ and $\theta$. They are respectively supported in the light green and light red shaded areas.}
    \label{fig:sphere_maps}
\end{figure}

\medskip

Now, let $H \colon [0,1]\times \mathscr{S}_3 \to \R$ be a Hamiltonian compactly supported in the northern hemisphere. We can assume up to shrinking the neighbourhood $\mathcal{N}(L_0)$ that $\mathcal{N}(L_0)$ misses the support of $H$. Note first that, by construction, $L_i$ is disjoint from $\Phi^1_H(L_j)$ whenever $i \neq j$. In order to have well-defined Floer complexes, we need to have transversal intersections. To this effect, for a small $\varepsilon>0$ we fix $F_\varepsilon$ a Hamiltonian compactly supported in $\mathcal{N}(L_0)$ whose $C^\infty$-norm is smaller than $\varepsilon$ and such that $\Phi_{F_\varepsilon}(L_0)$ intersects $L_0$ transversally in only two points $p_0$ and $q_0$. In order to fix the notations, we assume that $p_0$ represents the fundamental class in $HF_*(L_0, F_\varepsilon; \mathcal{N}(L_0))$, see Figure \ref{fig:sphere_links}. We denote by $p_2$ and $q_2$ the two transverse intersection points of $L_2$ and $\Phi_{f_*F_\varepsilon}^1(L_2)$ that are the respective images of $p_0$ and $q_0$ by $f$. We also denote by $p_0'$ and $q_0'$ the images of $p_0$ and $q_0$ by $\psi$ as well as $p_2'$ and $q_2'$ the images of $p_2$ and $q_2$ by $\theta$.

\begin{figure}[ht]
    \centering
\begingroup%
  \makeatletter%
  \providecommand\color[2][]{%
    \errmessage{(Inkscape) Color is used for the text in Inkscape, but the package 'color.sty' is not loaded}%
    \renewcommand\color[2][]{}%
  }%
  \providecommand\transparent[1]{%
    \errmessage{(Inkscape) Transparency is used (non-zero) for the text in Inkscape, but the package 'transparent.sty' is not loaded}%
    \renewcommand\transparent[1]{}%
  }%
  \providecommand\rotatebox[2]{#2}%
  \newcommand*\fsize{\dimexpr\f@size pt\relax}%
  \newcommand*\lineheight[1]{\fontsize{\fsize}{#1\fsize}\selectfont}%
  \ifx\svgwidth\undefined%
    \setlength{\unitlength}{280.46480194bp}%
    \ifx\svgscale\undefined%
      \relax%
    \else%
      \setlength{\unitlength}{\unitlength * \real{\svgscale}}%
    \fi%
  \else%
    \setlength{\unitlength}{\svgwidth}%
  \fi%
  \global\let\svgwidth\undefined%
  \global\let\svgscale\undefined%
  \makeatother%
  \begin{picture}(1,0.75802177)%
    \lineheight{1}%
    \setlength\tabcolsep{0pt}%
    \put(0,0){\includegraphics[width=\unitlength,page=1]{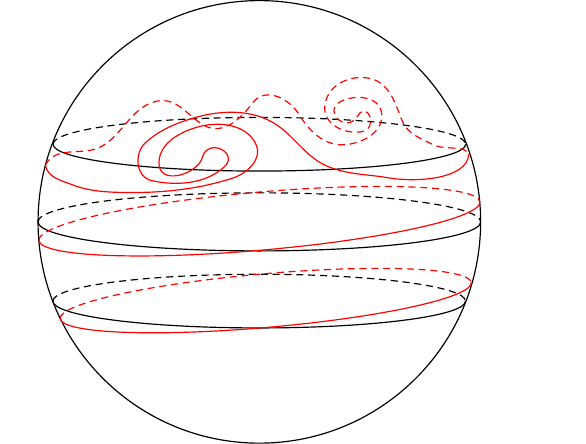}}%
    \put(0.42562444,0.17997909){\color[rgb]{0,0,0}\makebox(0,0)[lt]{\lineheight{1.25}\smash{\begin{tabular}[t]{l}$p_2$\end{tabular}}}}%
    \put(0.43791531,0.2679058){\color[rgb]{0,0,0}\makebox(0,0)[lt]{\lineheight{1.25}\smash{\begin{tabular}[t]{l}$q_2$\end{tabular}}}}%
    \put(0.41616999,0.35110529){\color[rgb]{0,0,0}\makebox(0,0)[lt]{\lineheight{1.25}\smash{\begin{tabular}[t]{l}$p_0$\end{tabular}}}}%
    \put(0.42089721,0.40215941){\color[rgb]{0,0,0}\makebox(0,0)[lt]{\lineheight{1.25}\smash{\begin{tabular}[t]{l}$q_0$\end{tabular}}}}%
    \put(-0.00266368,0.4002686){\color[rgb]{0,0,0}\makebox(0,0)[lt]{\lineheight{1.25}\smash{\begin{tabular}[t]{l}$\underline L$\end{tabular}}}}%
    \put(0.19020792,0.60637632){\color[rgb]{0,0,0}\makebox(0,0)[lt]{\lineheight{1.25}\smash{\begin{tabular}[t]{l}{\color{red}$\Phi_{H_\varepsilon}(\underline L)$}\end{tabular}}}}%
  \end{picture}%
\endgroup%

    \caption{The link $\underline L$ and its image by $\Phi_{H_\varepsilon}$}
    \label{fig:sphere_links}
\end{figure}

\noindent We can assume, up to a $C^\infty$-small perturbation of $H$, that $\Phi_H^1(L_1)$ intersects $L_1$ transversally. Then, we set
\[H_\varepsilon\coloneqq H+F_\varepsilon+ f_*F_\varepsilon.\]
With these definitions, we have that $\Phi_{H_\varepsilon}(L_i)$ and $L_j$ intersect transversally for all $i,j$ with empty intersection if $i\neq j$. Furthermore, our perturbation of $H$ is $C^\infty$-small, that is without any consequence on our statements on Hofer-Lipschitz quasimorphisms. We define additionally
\[H_\varepsilon''\coloneqq(H+F_\varepsilon)\circ \psi^{-1}, \quad H_\varepsilon'\coloneqq F_\varepsilon \circ \theta^{-1}\]
two time-dependent Hamiltonians on $\mathscr{S}_2$ and $\mathscr{S}_1$, which are extended by 0 away from the image of $\psi$ and $\theta$ respectively.

\medskip

Since the link $L_0' \cup L_1'$ in $\mathscr{S}_2$ has monotonicity constant $\lambda$ and as well as the link $L_2'$ in $\mathscr{S}_2$, we can define spectral invariants $c_{L_0' \cup L_1'}$ and $c_{L_2'}$ using the construction described in Section \ref{subsec.def of spec inv}. Proposition \ref{prop.reducing to comp supp Ham} follows from the next two results.

\begin{prop}\label{prop.hard ineq of spec inv}
Let $H$ be a Hamiltonian compactly supported in the northern hemisphere of $\mathscr{S}_3$. Let $H_\varepsilon$, $H_\varepsilon''$, and $H_\varepsilon'$ be the three Hamiltonians described above. Then,
\[3 c_3(H_\varepsilon) \leq 2 c_{L_0' \cup L_1'}(H_\varepsilon'')+ c_{L_2'}(H_\varepsilon').\]
\end{prop}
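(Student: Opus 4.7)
The plan is to prove Proposition \ref{prop.hard ineq of spec inv} by constructing a filtration-preserving ``almost Künneth'' chain map, which is the technical content of Section \ref{subsec.almost Künneth formula}. Writing $\tilde c_{\underline L}(H) \coloneqq k \cdot c_{\underline L}(H)$ for the un-normalised spectral number, the claim becomes the subadditivity $\tilde c_3(H_\varepsilon) \leq \tilde c_{L_0' \cup L_1'}(H''_\varepsilon) + \tilde c_{L_2'}(H'_\varepsilon)$. Because $H$ is supported in the interior of the northern hemisphere, away from $\mathcal N(L_0) \cup \mathcal N(L_2)$, the perturbed link $\Phi^1_{H_\varepsilon}(\underline L)$ meets $\underline L$ in the disjoint union
\[
(L_0 \cap \Phi^1_{F_\varepsilon}(L_0)) \sqcup (L_1 \cap \Phi^1_{H}(L_1)) \sqcup (L_2 \cap \Phi^1_{f_* F_\varepsilon}(L_2)),
\]
and, via $\psi$ and $\theta$, these three factors correspond bijectively to the intersection data defining the Floer complexes on $\mathscr{S}_2$ and $\mathscr{S}_1$. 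Consequently the generators of $CF_*(\Sym^3 \underline L, \Sym^3 H_\varepsilon)$ are in one-to-one correspondence with tensors $[x_0', x_1'] \otimes x_2'$ of generators for $(L_0' \cup L_1', H''_\varepsilon)$ and $(L_2', H'_\varepsilon)$.

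Next I would match actions and cappings. Given cappings on the $\mathscr{S}_2$ and $\mathscr{S}_1$ sides whose images happen to lie in $\psi(D_N \cup \mathcal N(L_0))$ and $\theta(D_2 \cup \mathcal N(L_2))$ respectively, pulling back by $\psi^{-1}$ and $\theta^{-1}$ and recombining yields a capping of the corresponding triple in $\Sym^3(\mathscr{S}_3)$. Since the three links share the same monotonicity constant $\lambda$, the Novikov variable $T$ shifts the action uniformly by $-\lambda$ on all three complexes, so these product cappings are well-defined up to a Novikov shift. A direct computation, using that $H$ vanishes on $L_0 \cup L_2$ while $F_\varepsilon$ and $f_* F_\varepsilon$ vanish on $L_1$, then gives the action identity
\[
\mathcal A_{H_\varepsilon}([x_0, x_1, x_2]) = \mathcal A_{H''_\varepsilon}([x_0', x_1']) + \mathcal A_{H'_\varepsilon}([x_2'])
\]
up to a global shift by a multiple of $\lambda$ that can be absorbed into the reference cappings.

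With these preliminaries in place, the heart of the argument is to construct a filtration-preserving chain map
\[
\Phi \colon CF_*(\Sym^2(L_0' \cup L_1'), \Sym^2 H''_\varepsilon) \otimes_\Lambda CF_*(L_2', H'_\varepsilon) \longrightarrow CF_*(\Sym^3 \underline L, \Sym^3 H_\varepsilon)
\]
that sends $PSS(e) \otimes PSS(e)$ to $PSS(e)$; the desired inequality then follows by applying $\Phi$ to product representatives at filtration levels just above the spectral values on each factor. To verify the chain map property, I would run a neck-stretching argument along a circle lying in the annular region of $\mathscr{S}_3$ between $L_0$ and $L_2$, chosen so as to separate the image of $\psi$ from the image of $\theta$. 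Via the tautological correspondence of Section \ref{subsubsec.tautological correspondence}, an index-one strip in $\Sym^3(\mathscr{S}_3)$ corresponds to a holomorphic map from a $3$-fold branched cover of the strip into $\mathscr{S}_3$; stretching the neck forces such a curve to break into components on each side of the neck, which under $\psi$ and $\theta$ reassemble into strips in $\Sym^2(\mathscr{S}_2)$ and in $\mathscr{S}_1$. The main obstacle will be controlling the broken limit: ruling out components drifting into the extra discs added in $\mathscr{S}_1$ and $\mathscr{S}_2$, components entirely contained in the stretched neck, and branch-point configurations incompatible with the clean $\Sym^3 = \Sym^2 \times \Sym^1$ split on the bijection of generators. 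Handling these pathologies is precisely the content of the two technical lemmas of Section \ref{sec.neck-stretching and constrains}, and the residual slack they leave is what ``almost'' refers to in ``almost Künneth''.
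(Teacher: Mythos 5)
Your overall strategy---identify generators, match actions via $\psi$ and $\theta$ using the common monotonicity constant $\lambda$, then compare spectral numbers through a product-type map verified by neck-stretching along a circle separating $L_0$ from $L_2$---is the same as the paper's. The action computation and the bijection of generators correspond to Proposition \ref{prop.iso of filtered module S1 S2 and S2}. But there is a genuine gap at the step you call ``the heart of the argument'': you assert the existence of a filtration-preserving \emph{chain map} $\Phi$ on the whole tensor product sending $PSS(e)\otimes PSS(e)$ to $PSS(e)$. This full K\"unneth-type statement is precisely what the authors state they were \emph{unable} to prove; they explicitly record that $g$ is a priori not a chain morphism and that the relevant diagram is a priori not even quasi-commutative, and they leave the chain-map property as a conjecture. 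Your neck-stretching sketch does not close this gap for arbitrary generators: the index and positivity arguments that control curves crossing the neck (Lemma \ref{lem.stretch L0 L2}) rely on the third coordinate being the distinguished generator $p_2'$, so that the $\mathscr{S}_1$-component of the limit satisfies $\mu(u_1)\geq 2n_{\tau_1}(u_1)$, and even then one must separately match the signed count of strips with cylindrical domain between $L_0$ and $L_2$ against the corresponding curve through $\tau_2$ in $\mathscr{S}_2$---this residual mismatch is exactly why the formula is only ``almost'' K\"unneth.

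The correct workaround, and what the paper actually does, is to prove only a pointwise statement (Proposition \ref{prop.maps of fund class is fund class}): for $x$ a minimal-action cycle representing the fundamental class in $CF_*(L_0'\cup L_1',H''_\varepsilon)$ and $p_2'$ the specific top generator near $L_2'$, the element $g(x\otimes p_2')$ is a cycle, and its class is sent to the fundamental class by the continuation map to a $C^\infty$-small Hamiltonian. Note that your proposal also omits this second point: even granting that $g(x\otimes p_2')$ is a cycle, you must identify its homology class, which the paper does by showing $g$ commutes with continuation maps on these particular elements (again not in general). Since $g$ preserves the filtration and $\mathcal A(x\otimes p_2')=\mathcal A(x)+\mathcal A(p_2')=2c_{L_0'\cup L_1'}(H''_\varepsilon)+c_{L_2'}(H'_\varepsilon)$, this weaker statement already yields the inequality. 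You should restructure your argument around this restricted claim rather than the full chain map, whose existence is open.
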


\begin{prop}\label{prop.eq of spec inv on diff spheres}
Let $H$ be a Hamiltonian compactly supported in the northern hemisphere of $\mathscr{S}_3$, and let $H_\varepsilon$ and $H_\varepsilon''$ be the two Hamiltonians as above. Then,
\[c_2(H_\varepsilon)=c_{L_0' \cup L_1'}(H_\varepsilon'').\]
\end{prop}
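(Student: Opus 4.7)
The strategy is to build a filtered chain isomorphism between $CF_*(\Sym(L_1 \cup L_2), \Sym^2 H_\varepsilon)$ on $\mathscr{S}_3$ and $CF_*(\Sym(L_0' \cup L_1'), \Sym^2 H_\varepsilon'')$ on $\mathscr{S}_2$, and then to read off the equality of spectral invariants from the induced map on PSS units. The bijection on generators is immediate: because $H$ is supported in the interior of the northern hemisphere, $F_\varepsilon$ in $\mathcal{N}(L_0)$, and $f_* F_\varepsilon$ in $\mathcal{N}(L_2)$, the time-one map $\Phi^1_{H_\varepsilon}$ coincides with $\Phi^1_H$ on $L_1$ and with $\Phi^1_{f_* F_\varepsilon}$ on $L_2$. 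Under $\psi$ (for the $L_1$ side) and $\Psi \coloneqq \psi \circ f^{-1}$ (for the $L_2$ side), these intersection sets match exactly the corresponding ones for $L_1'$ and $L_0'$ on $\mathscr{S}_2$, and the bijection extends symmetrically to $\Sym^2$.

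For the differential, I would note that $\mathscr{S}_2$ is obtained symplectically from $\mathscr{S}_3$ by collapsing the southern annulus $A$ between $L_0$ and $L_2$ (of area $1/2-\lambda$) via the identification $\Psi$, up to an area-preserving matching of the discs $D_2 \subset \mathscr{S}_3$ and the disc bounded by $L_0'$ in $\mathscr{S}_2$. I would pick nearly-symmetric almost complex structures $j, j'$ agreeing on the common region, and neck-stretch $j$ along a curve $C \subset A$. By SFT-type compactness, each Maslov-$1$ strip on $\Sym^2(\mathscr{S}_3)$ degenerates into a holomorphic building with pieces on either side of $C$; the technical lemmas of Section~\ref{sec.neck-stretching and constrains} --- which make precise the ``almost Künneth formula'' of the abstract --- will force the southern pieces to be trivial for index $1$, so each contributing strip corresponds bijectively to a Maslov-$1$ strip on $\Sym^2(\mathscr{S}_2)$. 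For the action, cappings can be chosen inside the common region and disjoint from the fat diagonal (which is possible since all intersection points have distinct components), so the Hamiltonian integral and the symplectic area term match by construction, while the bulk-parameter term $\eta \, \hat y \cdot \Delta$ vanishes on both sides independently of $\eta_3 \neq \eta_2$. Since both links share the monotonicity constant $\lambda$, the action of $T$ on the respective Novikov modules shifts by the same $\lambda$, and the bijection extends to a filtered $\Lambda$-module isomorphism sending the PSS unit of one side to that of the other.

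The main obstacle lies in the neck-stretching step: a priori, Maslov-$1$ strips on $\Sym^2(\mathscr{S}_3)$ can wrap through the annulus $A$, and their SFT-limits could produce non-trivial components in the southern piece that have no counterpart on $\mathscr{S}_2$. Ruling this out --- equivalently, upgrading the almost Künneth formula into an honest chain-level isomorphism in this configuration --- requires careful Maslov-index and area estimates based on Proposition~\ref{prop:monotonicity}, together with an analysis of holomorphic cylinders in the spirit of Lemma~\ref{lem.count of cylinders as in OS}, and is the content of the technical lemmas of Section~\ref{sec.neck-stretching and constrains}.
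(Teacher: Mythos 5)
Your first half --- the bijection on generators via $\psi$ and $\psi\circ f^{-1}$, and the matching of actions using product cappings away from the fat diagonal together with the fact that both links have the same monotonicity constant $\lambda$ (so that $T$ shifts the action by the same amount on both sides) --- is essentially the paper's argument (Proposition \ref{prop.iso of chain complexes S3 and S2}). One small imprecision: for the recapping classes $\widehat z$ the term $\eta\,\widehat z\cdot\Delta$ does \emph{not} vanish; what saves the day is Proposition \ref{prop:monotonicity}, which packages $\omega+\eta\Delta$ into $\frac{\lambda}{2}\mu$ with the \emph{same} $\lambda$ on both spheres even though $\eta\neq\eta''$. You gesture at this correctly with the $T$-action remark, but it is worth stating as the actual mechanism.

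The second half is where there is a genuine gap. The paper does \emph{not} need any neck-stretching or SFT compactness for this proposition: it extends $\psi$ to a global diffeomorphism $\psi'\colon\mathscr{S}_3\to\mathscr{S}_2$ (not a symplectomorphism) compatible with $H_\varepsilon$, pushes forward the almost complex structure $j$ to $j'=\psi'_*j$, and observes that in real dimension $2$ any orientation-compatible almost complex structure is tamed by any area form, so $j'$ is $\omega_2$-tame and $\Sym^2(j')$ is an admissible nearly symmetric structure on $\Sym^2(\mathscr{S}_2)$. The moduli spaces are then \emph{tautologically} identified by $\Sym^2(\psi')$; no strip has to be ruled out, because every $J$-holomorphic strip on one side corresponds to a $\Psi'_*J$-holomorphic strip on the other, annulus and all. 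Your proposed route --- stretching along a curve in the annulus between $L_0$ and $L_2$ and invoking ``the technical lemmas of Section \ref{sec.neck-stretching and constrains}'' to kill the southern pieces --- does not close the argument: those lemmas analyse the degeneration of $\mathscr{S}_3$ into $\mathscr{S}_2\vee\mathscr{S}_1$ for the \emph{three}-component link and the Künneth inequality of Proposition \ref{prop.hard ineq of spec inv}; they say nothing about Maslov-$1$ strips for the two-component link $L_1\cup L_2$ crossing the annulus $A$, so the key step of your proof is outsourced to results that do not cover it. (Separately, the identification of PSS units that you assert at the end requires compatibility of the isomorphism with continuation maps to a $C^\infty$-small Hamiltonian, which is Proposition \ref{prop.commutativity with cont maps} in the paper and needs its own, albeit routine, justification.)
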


\begin{rk}
In theory, there is no need to perturb $H$ in order to state Propositions \ref{prop.hard ineq of spec inv} and \ref{prop.eq of spec inv on diff spheres}, we could also have written
\[3c_3(H) \leq 2 c_{L_0' \cup L_1'}(H\circ \psi^{-1}) \, \textrm{ and }\, c_2(H)=c_{L_0' \cup L_1'}(H \circ \psi^{-1}).\]
\end{rk}

Combining the results of the above propositions we have a proof of Proposition \ref{prop.reducing to comp supp Ham}.

\begin{proof}[Proof of Proposition \ref{prop.reducing to comp supp Ham}]
We have
\begin{align*}
    \left( 3c_3 - 2c_2 -c_1\right)(H_\varepsilon) & \leq 2 c_{L_0' \cup L_1'}(H_\varepsilon'')+ c_{L_2'}(H_\varepsilon')-2c_2(H_\varepsilon) -c_1(H_\varepsilon)\\
    & = c_{L_2'}(H_\varepsilon')-c_1(H_\varepsilon),
\end{align*}
where the first inequality comes directly from Proposition \ref{prop.hard ineq of spec inv} and the second from Proposition \ref{prop.eq of spec inv on diff spheres}. Since $H_\varepsilon$ is a $C^\infty$-small perturbation of $H$ both summand on the right-hand side go to 0 as $\varepsilon$ goes to 0, it follows that $(3c_3-2c_2-c_1)(H) \leq 0$. This inequality holds for any Hamiltonian supported inside the interior of the northern hemisphere, in particular for its iterates. This implies directly that $r_\lambda(\Phi_H)\leq 0$ and since $\overline{H}$ is also compactly supported in the northern hemisphere, we have as well
\[r_\lambda(\Phi_H)=-r_\lambda(\Phi_H^{-1})=-r_\lambda(\Phi_{\overline{H}}) \geq 0.\]
Therefore, $r_\lambda(\Phi^1_H)=0$ as we wanted to show.
\end{proof}

\subsection{Proof of Proposition \ref{prop.eq of spec inv on diff spheres}}\label{subsec.proof of prop 4.4}

In this section, we prove Proposition \ref{prop.eq of spec inv on diff spheres}. In the proof, we exhibit an isomorphism at the chain level between two Floer complexes for some choice of almost complex structure. As before $H$ denotes a Hamiltonian on $\mathscr{S}_3$ compactly supported in the northern hemisphere, $H_\varepsilon$ denotes a suitable perturbation of $H$ and $H_\varepsilon''$ denotes the transferred Hamiltonians on $\mathscr{S}_2$.

\begin{prop}\label{prop.iso of chain complexes S3 and S2}
There is a natural isomorphism of filtered $\Lambda$-modules
\[CF^t_*(L_1 \cup L_2, H_\varepsilon) \cong CF_*^t(L_0' \cup L_1', H_\varepsilon'')\]
that yields a quasi-isomorphism of chain complexes, i.e., an isomorphism at the filtered homology level.
\end{prop}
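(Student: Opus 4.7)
The plan is to construct a natural $\Lambda$-linear, filtration-preserving bijection between the two chain complexes using the symplectic embedding $\psi$ together with the local symplectomorphism $\psi\circ f^{-1}\colon\mathcal{N}(L_2)\to\mathcal{N}(L_0')$, and then to verify that this bijection intertwines the differentials for a compatible choice of almost complex structure. Because $H$ is compactly supported in the open northern hemisphere and the perturbations $F_\varepsilon$, $f_*F_\varepsilon$ are $C^\infty$-small, the generators of $CF_*(L_1\cup L_2, H_\varepsilon)$ are the capped classes of pairs $[x,r]$ with $x\in L_1\cap\Phi^1_H(L_1)$ and $r\in\{p_2,q_2\}$, and symmetrically $[x',r']$ on $\mathscr{S}_2$. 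I would use $\psi$ to biject the first factor and $\psi\circ f^{-1}$ to biject the second ($p_2\leftrightarrow p_0'$, $q_2\leftrightarrow q_0'$), and, after choosing reference points and reference cappings that correspond under these two local symplectomorphisms, extend to capped generators by pushforward.

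\textbf{Filtration and $\Lambda$-linearity.} Both links share the monotonicity constant $\lambda$, so the Novikov variable $T$ shifts the action by $-\lambda$ on each side, making the bijection $\Lambda$-linear. The filtration is preserved because $\psi$ and $\psi\circ f^{-1}$ are symplectic: the $\omega_V$-areas and diagonal intersection numbers of corresponding cappings agree. The different bulk parameters $\eta=(3\lambda-1)/2$ on $\mathscr{S}_3$ and $\eta''=\lambda-1/4$ on $\mathscr{S}_2$ produce no discrepancy since, by Proposition \ref{prop:monotonicity}, any Maslov-$2$ disc on either side contributes the same action shift $\lambda$, independent of $\eta$.

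\textbf{Differentials and main obstacle.} I would then choose nearly symmetric almost complex structures $\Sym^2(j_t)$ and $\Sym^2(j_t')$ such that $j_t$ and $j_t'$ correspond under both $\psi$ and $\psi\circ f^{-1}$ near the Lagrangians. Via the tautological correspondence, a Maslov-$1$ strip on either side comes from a holomorphic map out of a $2$-fold branched cover of the strip. Since $L_1\cap L_2=\emptyset$ (and similarly for the perturbations), such a cover either splits into two disjoint strips — giving a product of an $L_1$-bigon with a (possibly constant) $L_2$-strip — or is a connected cylinder with two punctures on each boundary circle. Product contributions match directly because the two local symplectomorphisms identify the Floer data near $L_1\cup\Phi^1_H(L_1)$ and near $L_2\cup\Phi^1_{f_*F_\varepsilon}(L_2)$ with the corresponding data on $\mathscr{S}_2$. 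The cylindrical contributions are governed by Lemma \ref{lem.count of cylinders as in OS}: for each pair of candidate domains the algebraic count is $\pm 1$, determined by the local combinatorics near the bigons, and the same count arises on both sides because those local combinatorics are identified by $\psi\circ f^{-1}$. The main obstacle will be ruling out strips whose domain has nontrivial multiplicities in the regions invisible to $\psi$ — the southern hemisphere of $\mathscr{S}_3$ past $L_0$ and the disc of $\mathscr{S}_2$ bounded by $L_0'$ — which would otherwise spoil the direct correspondence. This is presumably the content of the two technical lemmas of Section \ref{sec.neck-stretching and constrains}, which I would expect to proceed by neck-stretching along $L_0$ (resp. $L_0'$) combined with area/index estimates excluding these stray degenerations.
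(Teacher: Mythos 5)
Your construction of the bijection on generators and your verification that it is $\Lambda$-linear and filtration-preserving (splitting a capping into a piece living where the identification is symplectic plus a disc class whose action contribution is $\tfrac{\lambda}{2}\mu$ on both sides) match the paper. The genuine gap is in the differential-matching step, and you flag it yourself: with only the two \emph{local} symplectomorphisms $\psi$ and $\psi\circ f^{-1}$ you have no control over strips whose domains carry positive multiplicity over the annulus of $\mathscr{S}_3$ between $L_0$ and $L_2$ or over the disc of $\mathscr{S}_2}$ bounded by $L_0'$, and your proposed fix is a misattribution: the two technical lemmas of Section \ref{sec.neck-stretching and constrains} are not about this proposition at all — they serve the almost K\"unneth formula (Proposition \ref{prop.maps of fund class is fund class}). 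As written, your argument neither rules out these stray domains nor matches their counts, so the chain-map property is not established. The appeal to Lemma \ref{lem.count of cylinders as in OS} is also not needed here and would not suffice, since that lemma concerns cylinders between $L_1$ and $L_1'$-type pairs for a small perturbation, not arbitrary index-$1$ strips of the complex $CF_*(L_1\cup L_2,H_\varepsilon)$.

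The paper closes this with a much more elementary device that makes the whole curve analysis unnecessary: it extends $\psi$ to a \emph{global} diffeomorphism $\psi'\colon\mathscr{S}_3\to\mathscr{S}_2$ agreeing with $\psi$ on a disc containing $\supp(H)$ and with $\psi\circ f^{-1}$ on $\mathcal{N}(L_2)$, sets $\Psi'=\Sym^2(\psi')$, and transports the almost complex structure: starting from $j$ on $\mathscr{S}_3$, one takes $j'=\psi'_*j$ on $\mathscr{S}_2$ and $(V,\omega_3,j)$-nearly symmetric perturbations $J_s$ whose pushforwards $\Psi'_*J_s$ are $(\Psi'_*V,\omega_2,j')$-nearly symmetric. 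The point you are missing is that although $\psi'$ is not a symplectomorphism (the spheres have different areas), in real dimension two any orientation-compatible almost complex structure is tamed by any area form, so $\Psi'_*J_s$ is still admissible for the Floer theory on $\mathscr{S}_2$. Then \emph{every} $J_s$-holomorphic strip corresponds tautologically to a $\Psi'_*J_s$-holomorphic strip, wherever its domain lives, and the two differentials are identified on the nose — no neck-stretching, no case analysis of product versus cylindrical domains, and no exclusion of curves entering the regions invisible to $\psi$. Symplecticity of $\psi'$ is only needed where you used it, namely for the action computation, and there the decomposition into a capping supported where $\psi'$ is symplectic plus a disc class already handles it.
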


\begin{proof}
We denote $\omega_3$ and $\omega_2$ the two symplectic forms on $\mathscr{S}_3$ and $\mathscr{S}_2$ and by $\omega_{3,X}$ and $\omega_{2,X}$ the respective Perutz-type symplectic forms on the respective symmetric products. The proof is in two parts; in the first part we define the isomorphism of filtered $\Lambda$-module while in the second part we show that for some almost complex structure the two complexes are identified. A priori, the almost complex structure is not both $\omega_3$- and $\omega_2$-tame. However, as long as the almost complex structure is picked close enough to the symmetrisation of an almost complex structure on the sphere, we will have tameness in both cases.

Let $\psi'\colon \mathscr{S}_3 \to \mathscr{S}_2$ be a diffeomorphism that satisfies the following two conditions:
\begin{itemize}
\setlength\itemsep{-1pt}
    \item[$\bullet$] $\psi'$ and $\psi$ coincide on a disc $D_1$ that contains the support of $H$ and avoids $\mathcal{N}(L_0)$.
    \item[$\bullet$] $\psi' \rvert_{\mathcal{N}(L_2)}=\psi\lvert_{\mathcal{N}(L_0)}\circ f^{-1}$. 
\end{itemize}
Note that $\psi$ is \textit{not} a symplectomorphism since $\mathscr{S}_3$ and $\mathscr{S}_2$ do not have the same total areas. Due to our definition of $H_\varepsilon$, we have $H_\varepsilon''=H_\varepsilon\circ \psi'^{-1}$. This implies in particular that $\psi'(\Phi_{H_\varepsilon}(L_i))=\Phi_{H_\varepsilon}(L_{2-i}')$ for $i=1,2$.

\noindent The diffeomorphism $\psi'$ naturally induces a map of the symmetric product.
\begin{align*}
\Psi' \coloneqq \Sym^2(\psi') \colon \Sym^2(\mathscr{S}_3) &\to \Sym^2(\mathscr{S}_2)\\
[x,y] &\mapsto [\psi'(x),\psi'(y)]
\end{align*}
The diffeomorphism $\Psi'$ sends $\Sym^2(L_1 \cup L_2)$ to $\Sym^2(L_1' \cup L_0')$ as well as their respective images under $\Phi_{H_\varepsilon}$ and $\Phi_{H_\varepsilon''}$. In particular, it induces a bijection on the Lagrangian intersections. We want to show that it actually induces a bijection on the generators of the Floer complexes, that is to say that the bijection on the intersection points can be lifted as a bijection on the capped intersection (up to the equivalence relation of (\ref{eq.equiv of cappings})). This follows from the fact that index 2 Maslov discs in the two groups
\[\pi_2(\Sym^2(\mathscr{S}_3), L_1 \cup L_2), \quad  \pi_2(\Sym^2(\mathscr{S}_2), L_1' \cup L_0')\]
have the same ``bulk-deformed" area in the sense of equation (\ref{eq.equiv of cappings}). We give now a bit more details.

\medskip

We fix some point
\[x=[x_1,x_2]\in \Sym^2(\Phi_{H_\varepsilon}(L_1 \cup L_2)) \pitchfork \Sym^2(L_1 \cup L_2),\]
where $x_i$ lies in $L_i$ for $i=1,2$, as a reference point for $CF_*(L_1 \cup L_2, H_\varepsilon)$ and its image $\Psi'(x)$ as a reference point for $CF_*(L_0' \cup L_1', H_\varepsilon'')$. Then, the reference path $\gamma_x:t\mapsto \Sym \Phi^t_{H_\epsilon}(x)$ is the product of a constant path $\gamma_2$ at $x_2$, and a path $\gamma_1$ starting at $x_1$ and staying inside $D_1$. For some other Lagrangian intersection point $y=[y_1,y_2] \in \Sym^2(\mathscr{S}_3)$ with capping $\widehat{y}$, we can find another capping $\widehat{y_0}$ consisting of a product of a capping from $\gamma_1$ to $y_1$ staying inside $D_1$ and a capping from $\gamma_2$ to $y_2$ contained in $\mathcal N(L_2)$. In particular, $\widehat{y_0}$ does not intersect the diagonal of $\Sym^2(\mathscr{S}_3)$. Then, according to Remark \ref{rk:recapping_with_discs} we can decompose $[\widehat{y}]$ as the class of capping $[\widehat{z} \# \widehat{y_0}]$, where
\[\widehat{z} \in \pi_2(\Sym^2(\mathscr{S}_3), L_1 \cup L_2).\]
Then if we denote $\widehat{y}'=\widehat{z}' \# \widehat{y_0}'$ the corresponding capping in $\Sym^2(\mathscr{S}_2)$ and $\eta''$ the real number such that the link $L'_1 \cup L'_0$ is $\eta''$-monotone, we have
\begin{align*}\mathcal A_{H_\epsilon''}(\Psi'_*[\widehat{y}]) &=\int_0^1\Sym^2(H_{\epsilon}'')(\Psi'(x))dt-\int_{[0,1]\times[0,1]} \left(\widehat{y}'\right)^*\omega_{2,X} - \eta ''\widehat{y}'\cdot \Delta\\
&=\int_0^1 \Sym^2(H_\varepsilon)(x) dt- \int_{[0,1]\times[0,1]} \left(\widehat{y_0}'\right)^*\omega_{2,X}\\
&-\int_{[0,1]\times[0,1]} \left(\widehat{z}'\right)^*\omega_{2,X}-\eta ''\widehat{z}'\cdot \Delta\\
&=\int_0^1 \Sym^2(H_\varepsilon)(x) dt- \int_{[0,1]\times[0,1]} \left(\widehat{y_0}\right)^*\omega_{3,X}\\
&-\int_{[0,1]\times[0,1]} \left(\widehat{z}\right)^*\omega_{3,X}-\eta \,\widehat{z}\cdot \Delta\\
&=\mathcal A_{H_\epsilon}([\widehat y])
\end{align*}
The first equality is just the definition of the action for the $\eta''$-monotone link $L'_0 \cup L_1'$ while the second equality just comes from expanding the capping $\widehat{y}'$ and the fact that $\widehat{y_0}'\cdot \Delta=0$. The third equality is a bit trickier; it comes from the fact that on one hand the map $\Psi'$ is a symplectomorphism when restricted to
\[\Sym^1(D_1)\times \Sym^1(\mathcal{N}(L_2))\subset \Sym^2(\mathscr{S}_3)\]
and that on the other hand the bulk parameter is chosen to make the two Lagrangians monotone with the same monotonicity constant $\lambda$, and therefore Proposition \ref{prop:monotonicity} implies:
\[\int_{[0,1]\times[0,1]} \left(\widehat{z}'\right)^*\omega_{2,X}+\eta ''\widehat{z}'\cdot \Delta=\frac \lambda 2 \mu\left(\widehat{z}'\right)=\frac \lambda 2 \mu\left(\widehat{z}\right)=\int_{[0,1]\times[0,1]} \left(\widehat{z}\right)^*\omega_{3,X}+\eta \,\widehat{z}\cdot \Delta.\]
This shows that $\Psi'$ induces an isomorphism of filtered modules
\[\Psi'_* \colon CF^t_*(L_1 \cup L_2, H_\varepsilon) \xrightarrow{\cong} CF_*^t(L_0' \cup L_1', H_\varepsilon'')\]
It remains to show that this isomorphism is an isomorphism at the homology level. In practice, as announced, we show something stronger. We exhibit a family of almost complex structures $(J_s)_{s \in [0,1]}$ such that the complex 
\[CF^t_*(L_1 \cup L_2, H_\varepsilon, J_s)\]
is isomorphic to the complex
\[CF^t_*(L'_1 \cup L'_2, H''_\varepsilon, \Psi'_*J_s).\]
To this effect, we pick an almost complex structure $j$ on $\mathscr{S}_3$. Note that $j$ is $\omega_3$-tame and that $j' \coloneqq \Psi'_*j$ is $\omega_2$-tame. As a consequence, the almost complex structure $\Sym^2(j)$ is $\omega_{3,X}$-tame while $\Sym^2(j')$ is $\omega_{2,X}$-tame. Let \smash{$(J_s)_{s\in[0,1]}$} be a generic small perturbation of $\Sym^2(j)$ that is $(V,\omega_3,j)$-nearly symmetric for some neighbourhood $V$ of the diagonal in $\Sym^2(\mathscr{S}_3)$. Then $(\Psi_*'J_t)_t$ will be $(\Psi'_*V, \omega_2, j')$-nearly symmetric since tameness is preserved under small perturbations. Then $\Psi'$ provides a one-to-one correspondence between the $J_t$-holomorphic curves and the $\Psi_*'J_s$-holomorphic curves, which completes the proof of the proposition.
\end{proof}

The same goes well for the continuation maps (as long as the isotopy of the Hamiltonians is supported in the northern hemisphere).

\begin{prop}\label{prop.commutativity with cont maps}
Let $H^s$ be a homotopy between two Hamiltonians $H$ and $G$ of the sphere $\mathscr{S}_3$ such that $H^s$, $H$ and $G$ are all compactly supported in the northern hemisphere $D_N$. As in Section \ref{subsec.setting and notations} we can define the Hamiltonians $H_\varepsilon$, $H_\varepsilon''$, $G_\varepsilon$, $G_\varepsilon''$ and $H^s_\varepsilon$. Then the isomorphisms of Proposition \ref{prop.iso of chain complexes S3 and S2} commute with the continuation maps induced by $H^s_\varepsilon$ and $H_\varepsilon^s \circ \psi^{-1}$. That is, we get a commutative diagram
\[
\begin{tikzcd}
HF_*(L_1 \cup L_2, H_\epsilon) \arrow{r}{\cong} \arrow[swap]{d} & HF_*(L_0'\cup L_1', H_\epsilon'') \arrow{d}\\
HF_*(L_1 \cup L_2, G_\varepsilon)  \arrow{r}{\cong} & HF_*(L_0'\cup L_1', G_\varepsilon''),
\end{tikzcd}    
\]
where the vertical arrows are induced by the continuation maps.
\end{prop}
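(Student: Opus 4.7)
The proof plan mirrors that of Proposition \ref{prop.iso of chain complexes S3 and S2}, carried out now for continuation data instead of the Floer differential. Reuse the diffeomorphism $\psi' \colon \mathscr{S}_3 \to \mathscr{S}_2$ and its symmetric lift $\Psi' = \Sym^2(\psi')$. Since $H$, $G$ and each $H^s$ are supported in the northern hemisphere $D_N \subset D_1$, the perturbed homotopy $H^s_\varepsilon = H^s + F_\varepsilon + f_*F_\varepsilon$ pushes forward under $\psi'$ to the homotopy $(H^s_\varepsilon)'' = (H^s + F_\varepsilon) \circ \psi^{-1}$ on $\mathscr{S}_2$, interpolating between $H_\varepsilon''$ and $G_\varepsilon''$; the piece $f_*F_\varepsilon$ localised near $L_2$ is transported to $F_\varepsilon \circ \psi^{-1}$ near $L_0'$ because $\psi'|_{\mathcal N(L_2)} = \psi \circ f^{-1}$.

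Next, pick a generic $s$-dependent family $(J_{s,t})_{(s,t) \in \R \times [0,1]}$ of $(V,\omega_3,j)$-nearly symmetric almost complex structures on $\Sym^2(\mathscr{S}_3)$, constant in $s$ for $\lvert s \rvert$ large and matching at the ends the almost complex structures used in Proposition \ref{prop.iso of chain complexes S3 and S2} to compute $CF_*(L_1 \cup L_2, H_\varepsilon)$ and $CF_*(L_1 \cup L_2, G_\varepsilon)$. Since $\omega$-tameness persists under $C^\infty$-small perturbations, the pushed-forward family $(\Psi'_* J_{s,t})$ is nearly symmetric on $\Sym^2(\mathscr{S}_2)$ and can be used to compute the continuation map on the $\mathscr{S}_2$ side.

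The continuation map on $\mathscr{S}_3$ counts solutions $u \colon \R \times [0,1] \to \Sym^2(\mathscr{S}_3)$ of
\[\partial_s u + J_{s,t}\bigl(\partial_t u - X_{\Sym^2(H^s_\varepsilon)}(u)\bigr) = 0\]
with moving Lagrangian boundary conditions given by the homotopy. By precisely the argument used in the proof of Proposition \ref{prop.iso of chain complexes S3 and S2}, post-composition $u \mapsto \Psi' \circ u$ induces a bijection between such solutions and their $\mathscr{S}_2$ counterparts: the restriction of $\psi'$ to the compact region where the Hamiltonian and the Lagrangian perturbation are non-trivial is a symplectomorphism onto its image, so the continuation PDE is intertwined, and the capping classes correspond because both links are $\lambda$-monotone with the same $\lambda$, by Proposition \ref{prop:monotonicity}.

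The upshot is that $\Psi'_*$ identifies the two continuation maps on each generator at the chain level, hence induces the desired commutative square on homology. The main technical point---that the bijection of continuation strips respects the Novikov action of $T$ and the action filtration---is exactly the bulk-deformed area computation already carried out in the proof of Proposition \ref{prop.iso of chain complexes S3 and S2}, transported verbatim from the differential to the continuation setting; this is where the hypothesis that $H^s$ is supported inside the northern hemisphere is used, since it guarantees that $\psi'$ acts as the symplectomorphism $\psi$ on the region seen by the Floer data.
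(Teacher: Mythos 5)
Your proposal is correct and follows exactly the route the paper takes: the paper's own proof is a one-line remark that the argument of Proposition \ref{prop.iso of chain complexes S3 and S2} adapts to the curves counted in the continuation map, and you have simply spelled out that adaptation (pushing forward the homotopy and the $s$-dependent almost complex structures via $\Psi'$, and repeating the action/capping computation). No gaps.
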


\begin{proof}
The proof of Proposition \ref{prop.iso of chain complexes S3 and S2} is easily adaptable in order to count the pseudo-holomorphic curves arising in the continuation map induced by $H^s$ and $H^s \circ \psi^{-1}$.
\end{proof}

The proposition \ref{prop.eq of spec inv on diff spheres} then follows from the above propositions.

\begin{proof}[Proof of Proposition \ref{prop.eq of spec inv on diff spheres}]
We use Proposition \ref{prop.iso of chain complexes S3 and S2} with $G$ some $C^\infty$-small Hamiltonian compactly supported in the northern hemisphere, so the homologies $HF_*(L_1 \cup L_2, G_\varepsilon)$ and $HF_*(L_1' \cup L_0', G_\varepsilon'')$ are canonically identified with the relative quantum homology of the corresponding Lagrangians. From Proposition \ref{prop.iso of chain complexes S3 and S2}, the top arrow in the commutative diagram of Proposition \ref{prop.commutativity with cont maps} respects filtration. This implies that the spectral invariants on both complexes are equal,
\[c_2(H_\varepsilon)=c_{L_0' \cup L_1'}(H_\varepsilon'').\]
\end{proof}

\subsection{An almost Künneth formula for link Floer homology}\label{subsec.almost Künneth formula}

In this section, our aim is to prove the proposition \ref{prop.hard ineq of spec inv}. Our proof relies on having some control on the holomorphic curves appearing in the Floer complexes and in the continuation maps, the proof of these more technical results will be postponed to Section \ref{sec.neck-stretching and constrains}. 

\medskip

The first result of this section is to describe the isomorphism we would hope for, forgetting about the differential. One can see that this is very similar to the first statement of Proposition \ref{prop.iso of chain complexes S3 and S2}. First, we recall the definition of the tensor product of two filtered chain complexes.

\begin{dfn}\label{def.iso of chain cplx}
Let $(C_*^1, \partial^1, \nu^1)$ and $(C_*^2, \partial^2, \nu^2)$ be two filtered chain complexes where $\nu_i \colon C^i_* \to \R\cup\{-\infty\}$ is the filtration. Then, the product of two filtered chain complexes is given by $(C_*^1 \otimes C_*^2, \partial, \nu \coloneqq \nu^1+\nu^2)$, where the standard product differential $\partial$ is defined by
\[\partial (c^1 \otimes c^2)=c^1 \otimes \partial c^2+ \partial c^1 \otimes c^2\]
and the filtration $\nu$ is defined by
\[\nu(c^1 \otimes c^2)=\nu^1(c^1)+\nu^2(c^2).\]
\end{dfn}

\noindent We point out that at the homology level, the Künneth formula for filtered homology is more complicated that its unfiltered counterpart, this is due to the fact that finite bars do not have trivial projective resolutions, see \cite[Section 2.2]{PSS} for details. We can now state the next proposition.

\begin{prop}\label{prop.iso of filtered module S1 S2 and S2}
    There is a natural isomorphism of filtered $\Lambda$-modules 
    \[g:CF_*(L_0'\cup L_1', H_\epsilon'')\otimes_\Lambda CF_*(L_2',H_\epsilon')\xrightarrow{\cong} CF_*(L_0 \cup L_1 \cup L_2, H_\epsilon)\,.\]
\end{prop}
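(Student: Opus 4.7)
The plan is to mimic closely the proof of Proposition \ref{prop.iso of chain complexes S3 and S2}, using that the three links $L_0\cup L_1\cup L_2$, $L_0'\cup L_1'$, and $L_2'$ on the three different spheres all have the same monotonicity constant $\lambda$. First I would establish a canonical bijection between generators. Because the three circles $L_0,L_1,L_2$ are pairwise disjoint and the support of $H_\varepsilon$ is arranged so that $\Phi^1_{H_\varepsilon}(L_i)\cap L_j=\emptyset$ for $i\neq j$, an intersection point in $\Sym(\underline L)\cap\Sym(\Phi^1_{H_\varepsilon}(\underline L))$ is the unordered triple $[y_0,y_1,y_2]$ where $y_i\in L_i\cap\Phi^1_{H_\varepsilon}(L_i)$. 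Via the embeddings $\psi$ and $\theta$, such a triple corresponds bijectively to a pair consisting of a generator $[\psi(y_0),\psi(y_1)]$ of $CF_*(L_0'\cup L_1',H_\varepsilon'')$ together with a generator $\theta(y_2)$ of $CF_*(L_2',H_\varepsilon')$.

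Next I would define $g$ on the level of cappings. Fix a reference point $x=[x_0,x_1,x_2]$ with $x_i\in L_i$, and take $[\psi(x_0),\psi(x_1)]$ and $\theta(x_2)$ as reference points on $\mathscr{S}_2$ and $\mathscr{S}_1$. For any generator $y=[y_0,y_1,y_2]$, choose a \emph{canonical} capping $\widehat y^{\mathrm{can}}$ that factors as a product of cappings $\widehat y_i$ where $\widehat y_0$ stays inside $\mathcal{N}(L_0)$, $\widehat y_1$ stays inside the northern hemisphere (away from $\mathcal N(L_0)$), and $\widehat y_2$ stays inside $\mathcal{N}(L_2)$; such a capping automatically avoids the diagonal $\Delta\subset \Sym^3(\mathscr{S}_3)$. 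The triple $(\widehat y_0,\widehat y_1,\widehat y_2)$ can then be transferred via $\psi$ and $\theta$ to yield a canonical capping of $[\psi(y_0),\psi(y_1)]$ on $\mathscr{S}_2$ and of $\theta(y_2)$ on $\mathscr{S}_1$. Define $g$ by sending the class of $\widehat y^{\mathrm{can}}$ to the tensor of these two transferred cappings, and extend $\Lambda$-linearly.

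To see that $g$ is well-defined, Remark \ref{rk:recapping_with_discs} tells us that any other capping of a fixed end path differs from the canonical one by adjoining Maslov-index-$2$ discs with boundary on the Lagrangian, i.e. by multiplication by a power of $T$. The identification $T\otimes 1=1\otimes T$ in the tensor product over $\Lambda$ makes this $\Lambda$-linear extension unambiguous, and the bijection on canonical generators upgrades to a $\Lambda$-module isomorphism since canonical cappings form free $\Lambda$-bases on both sides.

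The last and main step is verifying that $g$ is filtration preserving. For a canonical capping $\widehat y^{\mathrm{can}}$, the intersection $\widehat y^{\mathrm{can}}\cdot \Delta$ vanishes, so the action reduces to
\[\mathcal A_{H_\varepsilon}([\widehat y^{\mathrm{can}}])=\sum_{i=0}^{2}\left(\int_0^1 H_\varepsilon(\gamma_{x_i}(t))\,dt-\int_{[0,1]^2}\widehat y_i^{*}\omega_3\right).\]
Each summand transfers directly to the corresponding action contribution on $\mathscr{S}_2$ or $\mathscr{S}_1$: on the images of $\psi$ and $\theta$ the maps are area-preserving, and by construction of $H_\varepsilon''$ and $H_\varepsilon'$ the Hamiltonian values match as well. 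Hence the action of a canonical capping on $\mathscr{S}_3$ equals the sum of the actions on $\mathscr{S}_2$ and $\mathscr{S}_1$, which is exactly the filtration on the tensor product. Since the three links share the monotonicity constant $\lambda$, the Novikov variable $T$ shifts action by the same constant $-\lambda$ in each complex, so $\Lambda$-linearity of the comparison extends the identity of actions from canonical cappings to all classes. The chief obstacle is notational bookkeeping of cappings, areas, and diagonal intersections; the whole computation is driven by the fortunate coincidence that picking $\lambda\in[1/3,1/2)$ forces equal monotonicity constants on all three spheres, so the bulk-deformation terms vanish on the canonical cappings and the Novikov weights align.
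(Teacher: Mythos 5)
Your proposal is correct and follows essentially the same route as the paper: a bijection on generators induced by $\psi$ and $\theta$, diagonal-avoiding product cappings transferred by the area-preserving embeddings, recapping handled via Remark \ref{rk:recapping_with_discs} and the identification $T^k\otimes T^\ell = T^{k+\ell}$ over $\Lambda$, and filtration preservation from the shared monotonicity constant $\lambda$. The only cosmetic difference is that you fix a canonical capping per generator whereas the paper decomposes an arbitrary capping as $\widehat z\#\widehat s$; these are equivalent.
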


\begin{proof} 
We point out to the reader that the Novikov formal variable $T$ acts on the three Floer complexes in the proposition by decreasing the action by $\lambda$. This is obviously a requirement if we want to have a filtered isomorphism. The proof is very similar in spirit to the proof of the first part of Proposition \ref{prop.iso of chain complexes S3 and S2}.

The maps $\psi \colon D_N \cup \mathcal{N}(L_0) \to \mathscr{S}_2 $ and $\theta  \colon D_2 \cup \mathcal{N}(L_2) \to \mathscr{S}_1$ defined in Section \ref{subsec.setting and notations} give rise to a bijection between the symmetrisations of the links as follows
\begin{align*}
\Psi \times \Theta \colon & \Sym^3(L_0 \cup L_1 \cup L_2) \to \Sym^2(L'_0 \cup L'_1) \times \Sym^1(L'_2)\\
&[x_0,x_1,x_2] \to [\psi(x_0),\psi(x_1)] \times [\theta(x_2)]=:[x_0',x_1']\times [x_2'],
\end{align*}
where $x_0$, $x_1$ and $x_2$ belongs to $L_0$, $L_1$ and $L_2$ respectively. Since the link component $L_i$ is disjoint from $\Phi^1_H(L_j)$ whenever $i \neq j$, the map also induces a bijection on the transverse intersections. We need to take care of the cappings.

We fix some point
\[x=[x_0,x_1,x_2]\in \Sym^3(\Phi_{H_\varepsilon}(L_0 \cup L_1 \cup L_2)) \pitchfork \Sym^3(L_0 \cup L_1 \cup L_2),\]
as a reference path for $CF_*(L_0 \cup L_1 \cup L_2,H_\varepsilon)$. For some other Lagrangian intersection point $y=[y_0,y_1,y_2] \in \Sym^3(\mathscr{S}_3)$, with capping $\widehat{y}$. According to Remark \ref{rk:recapping_with_discs}, $\widehat{y}$ is in the same class as a capping $\widehat{z} \# \widehat{s}$, where
\[\widehat{z} \in \pi_2(\Sym^3(\mathscr{S}_3), L_0 \cup L_1 \cup L_2)\]
and $\widehat{s}$ is a capping that is the product of a capping $\widehat{s_1}$ in $D_1 \cup \mathcal{N}(L_0)$ (where $D_1$ is some disc disjoint from $\mathcal{N}(L_0)$ that contains the support of $H$) and a capping $\widehat{s_2}$ in $\mathcal{N}(L_2)$ such that $\widehat{s}$ does not intersect the diagonal in $\Sym^3(\mathscr{S}_3)$. Then, $\psi$ and $\theta$ induce a natural map on the generators of the form $([y_0,y_1,y_2], [\widehat{s}])$
\begin{align*}
    \Psi \times \Theta \colon ([y_0,y_1,y_2], [\widehat{s}]) \mapsto ([y_0',y_1'],[\widehat{s_1}']) \times ([y_2'],[\widehat{s_2}']),
\end{align*}
where $\widehat{s_1}' \coloneqq \widehat{s_1} \circ \Sym^2(\psi)^{-1}$ and $\widehat{s_2}' \coloneqq \widehat{s_2} \circ \theta^{-1}$. Since $\widehat s$ and $\widehat{s_1}'$ are away from the diagonal in $\Sym^3(\mathscr{S}_3)$ and $\Sym^2(\mathscr{S}_2)$ respectively, and since $\psi$ is area preserving, this construction preserves the action. For the other term of the capping $\widehat{z}$ we see that adding it to the capping $\widehat{s}$ amounts to multiplying by some formal variable $T^m \in \Lambda$. We can then extend the map $\Psi \times \Theta$ to all the generators of the Floer complex this way:
\begin{align*}
    \Psi \otimes \Theta \colon  & CF_*(L_0 \cup L_1 \cup L_2, H_\varepsilon) \to CF_*(L_0'\cup L_1', H_\varepsilon'')\otimes_\Lambda CF_*(L_2',H_\varepsilon')\\
    &([y_0,y_1,y_2], [\widehat{y}]) \mapsto T^k([y_0',y_1'],[\widehat{s_1}']) \otimes_{\Lambda} T^\ell([y_2'],[\widehat{s_2}']),
\end{align*}
for any $k$ and $\ell$ in $\Z$ such that $k+\ell=m$. Because we have tensored over the Novikov field $\Lambda$ any such choice give the same result in the tensor product. This maps preserves the action since the formal variable $T$ acts by decreasing the action by $\lambda$ in all complexes and it is also clearly bijective.
\end{proof}

In contrast to Proposition \ref{prop.commutativity with cont maps}, we did not manage to prove that the above isomorphism is a chain morphism or that it commutes with the continuation maps. We conjecture that this is true, but we will not need it for our purposes.

\medskip

Let $G$ be a time-dependent Hamiltonian supported in the northern hemisphere that is $C^\infty$-small. We also consider, as in Proposition \ref{prop.commutativity with cont maps}, an isotopy $(H^s)_s$ between $H$ and $G$, this defines the isotopies 
\[H_\varepsilon^s\coloneqq H^s+F_\varepsilon+F_\varepsilon\circ f\]
and $H_\varepsilon^s\circ \psi^{-1}$ that we will denote just by $H^s$ and $H''^s$ in order to have simpler notations. For some choice of two homotopies of isotopies of almost complex structure for now both denoted $J^s$ on $\mathscr{S}_3$ and $\mathscr{S}_2$, we get continuation maps
\[\Psi_{(H^s,J^s)} \colon CF_*(L_0 \cup L_1 \cup L_2, H_\varepsilon, J^0) \to CF_*(L_0 \cup L_1 \cup L_2, G_\varepsilon, J^1)\]
and
\[\Psi_{(H''^s,J^s)} \colon CF_*(L_0' \cup L_1', H''_\varepsilon, J^0) \to CF_*(L_0' \cup L_1', G''_\varepsilon, J^1).\]

The complexes $CF_*(L_2', H_\varepsilon', J)$ and $CF_*(L_2', G_\varepsilon', J)$ are canonically identified since $H_\varepsilon'=G_\varepsilon'$ on $\mathscr{S}_1$. All these maps along with the isomorphisms of filtered $\Lambda$-modules from Proposition \ref{prop.iso of filtered module S1 S2 and S2} yield the following diagram of filtered $\Lambda$-modules,
\begin{equation}\label{eq.almost commutative diag}
\begin{tikzcd}
CF_*(L_0'\cup L_1', H_\epsilon'',J^0)\otimes_\Lambda CF_*(L_2',H_\epsilon',J) \arrow{r}{g} \arrow[swap]{d}{\Psi_{(H''^s,J^s)}\otimes \,Id} & CF_*(L_0 \cup L_1 \cup L_2, H_\epsilon, J^0) \arrow{d}{\Psi_{(H^s,J^s)}}\\
CF_*(L_0'\cup L_1', G_\epsilon'',J^1)\otimes_\Lambda CF_*(L_2',G_\epsilon',J)  \arrow{r}{g} & CF_*(L_0 \cup L_1 \cup L_2, G_\epsilon,J^1).
\end{tikzcd}
\end{equation}
Note that $g$ is a priori not a chain morphism, and that this diagram is a priori \textit{not commutative nor quasi-commutative}.

Since $G_\varepsilon$ is $C^\infty$-small, one can canonically identify the homologies of the chain complexes of the bottom line of the above diagram with the quantum homology, since $\partial=0$ in this case \cite[Lemma 6.10]{CGHMSS22}. Moreover,
\begin{align*}
    QH_*(L_0 \cup L_1 \cup L_2, \Lambda) &\cong H_*(\mathbb{T}^3, \Lambda)\\ 
    &\cong H_*(\mathbb{T}^2, \Lambda) \otimes_\Lambda H_*(\mathbb{T},\Lambda)\\
    &\cong QH_*(L_0' \cup L_1',\Lambda) \otimes_{\Lambda} QH_*(L_2',\Lambda).
\end{align*}

We recall that in Section \ref{subsec.setting and notations}, we called $p_2'$ the intersection points of $L_2'$ and $\Phi_{G_\varepsilon'}(L_2')$ that corresponds to the fundamental class of $HF_*(L_2', F_\varepsilon\circ \theta^{-1}; \mathcal{N}(L_2'))$. Then $p_2'$ with suitable capping also corresponds to the fundamental class of $QH_*(L_2')$ under the isomorphism with $HF_*(L_2', G_\varepsilon', J)$. We prove the following key proposition on the representatives of the fundamental classes of Link Floer homology.

\begin{prop}[Almost Künneth formula]\label{prop.maps of fund class is fund class}
Let $H$ and $G$ be two Hamiltonians as before and $x$ be some chain element of $CF_*(L_0' \cup L_1', H_\varepsilon'')$ that represents the fundamental class. The chain element $p_2'$ as above in $CF_*(L_2', H_\varepsilon')$ represents the fundamental class too. As a consequence, in the tensor product of the chain complexes, $x \otimes p_2'$ is also a representative of the fundamental class. Let $g$ be the filtered isomorphism of Proposition \ref{prop.iso of filtered module S1 S2 and S2}. Then, there exists a choice of isotopies of almost complex structures such that 
\[g(x \otimes p_2') \in CF_*(L_0 \cup L_1 \cup L_2, H_\varepsilon)\]
is a cycle and its homology class is mapped to the fundamental class of 
\[HF_*(L_0\cup L_1 \cup L_2, G_\varepsilon)\]
by the continuation map $\Psi_{(H^s,J^s)}$.
\end{prop}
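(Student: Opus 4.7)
The plan is to combine a neck-stretching argument with the tautological correspondence to control the holomorphic curves contributing to $\partial \bigl( g(x \otimes p_2') \bigr)$ and to the continuation map applied to it. The idea is to pick almost complex structures on $\Sym^3(\mathscr{S}_3)$ that force any such strip to decouple, under the tautological correspondence, into one piece supported on the $\psi$-side (modelled on $\mathscr{S}_2$) and one piece supported on the $\theta$-side (modelled on $\mathscr{S}_1$).

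Concretely, I would work with almost complex structures obtained by perturbing $\Sym^3(j_N)$ for a sequence of $\omega_3$-tame almost complex structures $j_N$ on $\mathscr{S}_3$ stretching a long neck along a simple closed curve $\gamma$ lying in the open annulus separating $L_0 \cup L_1$ from $L_2$. The perturbations are chosen $(V, \omega_3, j_N)$-nearly symmetric near the diagonal, in the sense of Definition \ref{dfn.nearly symmetric acs}, so that the tautological correspondence of Section \ref{subsubsec.tautological correspondence} applies and strips in $\Sym^3(\mathscr{S}_3)$ can be studied via branched covers of the strip mapping to $\mathscr{S}_3$. In the limit $N \to \infty$, such a map degenerates into components lying entirely on one side of $\gamma$, connected by trivial cylinders over $\gamma$. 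Via $\psi$ and $\theta$, these components are identified with holomorphic strips in $\Sym^2(\mathscr{S}_2)$ with boundary on $L'_0 \cup L'_1$ and in $\mathscr{S}_1$ with boundary on $L'_2$, respectively. The two technical lemmas of Section \ref{sec.neck-stretching and constrains} will establish this decomposition and control the Maslov indices and domains of the limiting pieces, using Proposition \ref{prop:monotonicity} and an analysis in the spirit of Lemma \ref{lem.count of cylinders as in OS}.

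Granting the decomposition, a Maslov-index-$1$ strip splits in the limit into a Maslov-$i$ piece on the $\mathscr{S}_2$-side and a Maslov-$(1-i)$ piece on the $\mathscr{S}_1$-side, with index-$0$ contributions being constants. Transferring through $g$ yields the almost-Künneth identity
\[
\partial \bigl( g(x \otimes p_2') \bigr) = g\bigl(\partial x \otimes p_2'\bigr) \pm g\bigl(x \otimes \partial p_2'\bigr).
\]
Both terms vanish: $\partial x = 0$ by assumption, and $\partial p_2' = 0$ because $H_\varepsilon'$ is $C^\infty$-small so $\partial \equiv 0$ on $CF_*(L_2', H_\varepsilon')$. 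Hence $g(x \otimes p_2')$ is a cycle. The same neck-stretching argument applied to continuation strips yields, at the homology level,
\[
\Psi_{(H^s, J^s)} \bigl[ g(x \otimes p_2') \bigr] = \bigl[ g\bigl( \Psi_{(H''^s, J^s)}(x) \otimes p_2' \bigr) \bigr].
\]
Since continuation maps preserve the fundamental class, $\Psi_{(H''^s, J^s)}(x)$ represents the fundamental class of $HF_*(L'_0 \cup L'_1, G_\varepsilon'')$; combined with the Künneth decomposition $QH_*(L_0 \cup L_1 \cup L_2) \cong QH_*(L'_0 \cup L'_1) \otimes_\Lambda QH_*(L'_2)$ noted in the excerpt and the fact that $g$ is built from the area-preserving embeddings $\psi$ and $\theta$ (so that it sends a tensor product of unit representatives to a unit representative when all the Hamiltonians involved are $C^\infty$-small), the right-hand side represents the fundamental class of $HF_*(L_0 \cup L_1 \cup L_2, G_\varepsilon)$.

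The principal obstacle is the neck-stretching analysis: under the necessarily non-generic nearly-symmetric almost complex structures, one must show that Maslov-$1$ strips degenerate only into the clean $1 + 0$ configurations above. This means ruling out disc and sphere bubbles, constraining the Maslov indices and domains of any neck components, and ensuring that transversality can be achieved while keeping the tautological correspondence effective. This delicate analysis is what is deferred to Section \ref{sec.neck-stretching and constrains}.
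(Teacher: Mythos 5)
Your overall strategy coincides with the paper's: stretch the neck of $\mathscr{S}_3$ along a curve separating $L_0\cup L_1$ from $L_2$, force Floer strips and continuation strips to degenerate into pieces living on the $\mathscr{S}_2$- and $\mathscr{S}_1$-sides, and deduce the chain identity $\partial g(x\otimes p_2')=g(\partial x\otimes p_2')\pm g(x\otimes \partial p_2')$ together with the compatibility with continuation maps. The derivation of the proposition from these two facts is also the same as in the paper.

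However, there is a genuine gap in the key claim that ``a Maslov-index-$1$ strip splits in the limit into a Maslov-$i$ piece on the $\mathscr{S}_2$-side and a Maslov-$(1-i)$ piece on the $\mathscr{S}_1$-side, with index-$0$ contributions being constants.'' This clean $1+0$ splitting is false, and the configurations it misses are precisely why the formula is only an \emph{almost} K\"unneth formula. There exist index-$1$ strips in $\Sym^3(\mathscr{S}_3)$ whose domain is a cylinder joining $L_0$ and $L_2$, hence crossing the neck ($n_\tau>0$). Their Gromov limits are nodal products $u_2\times u_1$ where $u_2$ is an index-$1$ strip through the south pole $\tau_2$ of $\mathscr{S}_2$ and $u_1$ is an index-$2$ strip through the north pole $\tau_1$ of $\mathscr{S}_1$; the total index is still $1$ because the matching condition at the node costs $2n_{\tau_1}(u_1)=2$ (the virtual index computation of Remark \ref{rk.first vdim computation}). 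These curves are not products before the limit, and one must show that their signed count equals the coefficient of the corresponding neck-crossing term in $\partial x$ on the $\mathscr{S}_2$-side. The paper handles this with two ingredients you do not have: (a) a \emph{preliminary} neck-stretch inside $\mathscr{S}_2$ between $L_0'$ and $L_1'$ (Lemma \ref{lem.stretch L0' L1'}) establishing existence and uniqueness of the index-$1$ strip through $\tau_2$ from $[p_0',r_1']$ to $[q_0',r_1']$ and ruling out continuation strips through $\tau_2$; and (b) a sign-matching argument that localises the neck-crossing curves near $\{r_1\}\times\Sym^2(\mathcal V)$ so that they behave as in Lemma \ref{lem.count of cylinders as in OS}, and then fixes the sign by comparing with the $C^\infty$-small Hamiltonian $G$, where the differential vanishes for rank reasons (Lemma \ref{lem.vanishing of partial}) and the K\"unneth identity holds trivially. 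Without step (b), even granting $\partial x=0$, you cannot conclude $\partial g(x\otimes p_2')=0$: the contributions of the neck-crossing cylinders must be shown to cancel against nothing else but themselves, which requires knowing their signed count agrees with the corresponding count on $\mathscr{S}_2$. So the ``delicate analysis'' you defer cannot deliver the splitting you assert; it must instead deliver a correction term and an identification of its signed count.
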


Proposition \ref{prop.maps of fund class is fund class} does not imply the quasi-commutativity of (\ref{eq.almost commutative diag}) but it gives enough information to prove Proposition \ref{prop.hard ineq of spec inv}.

\begin{proof}[Proof of Proposition \ref{prop.hard ineq of spec inv}]
Let $x$ be a representative of the fundamental class of minimal action in $CF_*(L_0' \cup L_1',H_\varepsilon'')$, by definition \smash{$\mathcal{A}(x)=2c_{L_0' \cup L_1'}(H_\varepsilon'')$}.

From Proposition \ref{prop.maps of fund class is fund class}, we get that $g(x \otimes p_2')$ is also a cycle and its homology class is the fundamental class. As a consequence,
\begin{align*}
    3c_3(H_\varepsilon)&\coloneqq 3c_{L_0 \cup L_1 \cup L_2}(H_\varepsilon)\\
    &\leq \mathcal{A}(g(x \otimes p_2'))\\
    &= \mathcal{A}(x \otimes p_2')\\
    &=\mathcal{A}(x)+\mathcal{A}(p_2')\\
    &=2c_{L_0' \cup L_1'}(H_\varepsilon'')+c_{L_2'}(H_\varepsilon').
\end{align*}
The first inequality comes from the fact $[g(x \otimes p_2')]$ is the fundamental class, the second comes from the fact that $g$ is a filtered isomorphism and the rest follow from the definition of the filtration on a tensor of chain complexes as well as the definition of the spectral invariants. This proves Proposition \ref{prop.hard ineq of spec inv}.
\end{proof}

We are left with Proposition \ref{prop.maps of fund class is fund class}, we do this in the next section. The proof relies on understanding the holomorphic curves counted in diagram (\ref{eq.almost commutative diag}). This is possible for some curves for some specific choice of ``stretched" almost complex structure.

\section{Neck-stretching and constraints on Floer strips}\label{sec.neck-stretching and constrains}

In this section, we will give a proof of Proposition \ref{prop.maps of fund class is fund class}. The proof will be in two steps. We will exhibit almost complex structures, by using two neck-stretching arguments, that verify some helpful properties. First, in Lemma \ref{lem.stretch L0' L1'}, we will stretch the neck of some cylinder between $L_0'$ and $L_1'$ while in the proof of Lemma \ref{lem.stretch L0 L2}, we will stretch along some cylinder between $L_0$ and $L_2$. The end result will be a family of almost complex structures on the symmetric product of $\mathscr{S}_3$ that has simple holomorphic curves with starting points of the form $[p_2,\cdot,\cdot]$. By transferring accordingly this family onto the symmetric products of $\mathscr{S}_2$ and $\mathscr{S}_1$ this will yield the ``almost" Künneth formula of Section \ref{subsec.almost Künneth formula}. We first review some known results for transversality of pseudo-holomorphic curves.

\subsection{Transversality results for Link Floer Homology}\label{subsec.some trans results}
We will need to use the following result on transversality for pseudo-holomorphic curves. Let $M$ be a symplectic manifold and $\mathcal U \subset M$ some open set of $M$. We denote $\mathcal J(M)$ the space of isotopies $\{J_t\}_{t \in [0,1]}$ of $\omega$-tamed almost complex structure on $M$. We know that this space is non-empty and contractible. For $\textbf{J} \in \mathcal J(M)$, we denote
\[\mathcal J_\textbf{J}(\mathcal U) \coloneqq \left\{J \in \mathcal J(M), J\lvert_{\mathcal U^c} \equiv \textbf{J} \right\}.\]
Mimicking Proposition 5.13 of \cite{KhSe} that uses the results of \cite{FHS}, we get the following proposition.

\begin{prop}\label{prop.general transversality}
Let $L_1$ and $L_2$ be closed Lagrangians in $M$. Fix $\textbf{J} \in \mathcal J(M)$ and an open set $\mathcal U \subset M$. Then, for a $C^\infty$-generic $J \in \mathcal J_\textbf{J}(\mathcal U)$, all curves in the Floer complex $CF_\ast(L_0,L_1)$ that are not entirely contained in $\mathcal U^c$ are transversely cut-out.
\end{prop}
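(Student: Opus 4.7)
The plan is to follow the universal moduli space approach of Floer--Hofer--Salamon \cite{FHS}, adapted to the restricted class $\mathcal J_{\textbf{J}}(\mathcal U)$, exactly as in \cite[Prop.~5.13]{KhSe}. First I would fix $p > 2$ and introduce a Banach completion $\mathcal J^\ell_{\textbf{J}}(\mathcal U)$ of $C^\ell$ paths of $\omega$-tame almost complex structures that agree with $\textbf{J}$ outside $\mathcal U$. For every pair $x, y \in L_0 \cap L_1$, every relative homotopy class $\beta \in \pi_2(x,y)$, and every Fredholm index $i$, I would define the universal moduli space
\[\mathcal M^{\mathrm{univ}}_i(x,y,\beta) \coloneqq \bigl\{(u,J) : J \in \mathcal J^\ell_{\textbf{J}}(\mathcal U),\ \bar\partial_J u = 0,\ [u] = \beta,\ u(\R \times [0,1]) \not\subset \mathcal U^c \bigr\},\]
and regard it as the zero set of the Cauchy--Riemann section of a Banach bundle over $\mathcal B(x,y,\beta) \times \mathcal J^\ell_{\textbf{J}}(\mathcal U)$, where $\mathcal B(x,y,\beta)$ is the usual Banach manifold of $W^{1,p}$ strips with the prescribed asymptotics and Lagrangian boundary conditions.

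The crucial step is to show that the linearisation of this section is surjective at every $(u,J) \in \mathcal M^{\mathrm{univ}}_i(x,y,\beta)$. The Cauchy--Riemann linearisation $D_u$ is Fredholm but in general not onto; the additional freedom to vary $J$ must supply the missing surjectivity. The standard adjoint argument reduces this to the existence of an \emph{injective point} $z_0 \in \mathrm{int}(\R \times [0,1])$ of $u$ satisfying the additional constraint $u(z_0) \in \mathcal U$: given such a point, one may localise an infinitesimal variation of $J$ near $u(z_0)$ to pair non-trivially with any prescribed element of the cokernel of $D_u$, while keeping the variation supported in $\mathcal U$.

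The existence of the required injective point combines two facts. First, by \cite{FHS}, any non-constant $J$-holomorphic Floer strip between transverse Lagrangian intersection points admits a dense open subset of injective points in the interior of its domain. Second, the hypothesis that $u$ is not entirely contained in $\mathcal U^c$ means that $u^{-1}(\mathcal U) \cap \mathrm{int}(\R \times [0,1])$ is a non-empty open set, which therefore meets the dense open set of injective points.

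Once the Banach manifold structure on $\mathcal M^{\mathrm{univ}}_i(x,y,\beta)$ is established, I would apply the Sard--Smale theorem to the projection to $\mathcal J^\ell_{\textbf{J}}(\mathcal U)$ to obtain a comeagre set of regular values, intersect the resulting countable family of comeagre sets over the data $(x,y,\beta,i)$, and finally upgrade from $C^\ell$-regularity to $C^\infty$-regularity by the standard Taubes trick. The only non-routine ingredient is the density of injective points for Floer strips, which is precisely the content of \cite{FHS}; localising the perturbation inside $\mathcal U$ imposes no additional difficulty because the entry hypothesis on $u$ is built into the definition of $\mathcal M^{\mathrm{univ}}_i$, and strips entirely contained in $\mathcal U^c$ are simply excluded from the conclusion of the proposition.
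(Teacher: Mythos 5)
Your proposal is correct and follows essentially the same route as the paper's (sketched) proof: both reduce the surjectivity of the universal linearised operator to finding a regular/injective point of the strip that maps into $\mathcal U$, obtained by intersecting the Floer--Hofer--Salamon dense open set of regular points with the non-empty open set $u^{-1}(\mathcal U)$, and then perturbing $J$ in a small neighbourhood of the image of that point. The paper leaves the Banach-manifold, Sard--Smale and Taubes-trick scaffolding implicit, which you spell out, but the key idea is identical.
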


\begin{proof}[Sketch of Proof]
Let $J_t \in \mathcal J_\textbf{J}(\mathcal U)$ and let $u \colon \mathbb R_s \times [0,1]_t \to M$ be a $J_t$-holomorphic curve of the Floer complex. Then, according to \cite{FHS}, the space of regular points of $u$,
\[C(u) \coloneqq \left \{(s,t) \in \R \times (0,1)\mid  du(s,t) \neq 0, ~~ u(s,t) \not\in u(\R \setminus \{s\},t) \right\}\]
is open and dense in $\R \times (0,1)$. Since $\mathcal U$ is open. If $\text{Im}(u) \not\subset \mathcal U^c$, then $u^{-1}(\mathcal U)$ is a non-empty open set of $\R \times (0,1)$. In particular, $u^{-1}(\mathcal U) \cap C(u)$ is open and dense in $u^{-1}(\mathcal U)$. Then one can run the standard proof of transversality for pseudo-holomorphic curves. For $(s,t) \in u^{-1}(\mathcal U) \cap C(u)$ one can modify $J_t$ in a small box around $u(s,t)$ to obtain transversality.
\end{proof}

This has the consequence that one can further assume that the neighbourhood $V$ in Definition \ref{dfn.nearly symmetric acs} contains complex divisors of the form
\[\{z_i\}_{i=1}^m \times \Sym^{k-1}(\Sigma)\]
for points \(\{z_i\}_{i=1}^m \in \Sigma\) in the complement of both the link $\underline{L}$ and the link $\Phi_H(\underline L)$ and still have transversality for $C^\infty$-generic $(V,\omega,j)$-nearly symmetric almost complex structures. This means that on a neighbourhood of the divisors the almost complex structure is symmetric (hence complex). This appears in \cite{OS}.

\medskip

\noindent Indeed, let
\[N_i \coloneqq D_i\times \Sym^{k-1}(\Sigma)= \mathcal N (\{z_i\}\times \Sym^{k-1}(\Sigma))\] 
be a neighbourhood of the complex divisor with $D_i \subset \Sigma$ a small disk around $z_i$. If the almost complex structure is the restriction of a symmetric almost complex structure $\Sym^k(j)$ on $N_i$ then any pseudo-holomorphic strip $u$ that intersects $N_i$ is either completely contained inside a set of the form $\{z\} \times \Sym^{k-1}(\Sigma)$ with $z \in D_i$ or leaves the neighbourhood $N_i$. Let us show this property quickly, since some variation will appear later. We denote $\pi_1 \colon N_i \to D_i$ the projection on the first coordinate, it is a holomorphic map. Then upon appropriate restriction of the domain of definition, the map $ \pi_1 \circ u$ is well defined and holomorphic from an open subset of $\R \times (0,1)$ to $D_i$. From the maximal principle, this map is either constant or eventually leaves $D_i$.

In order to prove the transversality for $(V,\omega,j)$-nearly symmetric structure we have to show that the pseudo-holomorphic strips that we consider are never as in the first case but since $u$ is asymptotic to an intersection point of $\underline{L}$ and $\Phi_H(\underline L)$ that is not contained in $N_i$ this case can indeed never happen.

\medskip

The same argument also works for continuation maps as long as the point $z \in \Sigma$ is away from the trace of the isotopy $\Sym(\Phi^t_H(\underline L))$.

\subsection{Proof of Proposition \ref{prop.maps of fund class is fund class}}

We will use the notations of Section \ref{subsec.setting and notations} for $H$, $H_\varepsilon$, $H_\varepsilon'$ and $H_\varepsilon''$ as well as the corresponding Hamiltonians for $G$ and also for an isotopy $H^s$ between $H$ and $G$ as defined in Section \ref{subsec.almost Künneth formula}. We distinguish the south pole of $\mathscr{S}_2$ denoted by $\tau_2$. Likewise, we denote by $\tau_1$ the north pole of $\mathscr{S}_1$. Appropriate stretching of the neck in $\mathscr{S}_3$ would amount to wedge the two pointed spaces $(\mathscr{S}_1,\tau_1)$ and $(\mathscr{S}_2,\tau_2)$. We first prove the following preliminary lemma.

\begin{lemma}\label{lem.stretch L0' L1'}
There exists an almost complex structure $j$ on $(\mathscr{S}_2,\omega_2)$ and a neighbourhood $V$ of 
\[\Delta \cup \{\tau_2\} \times \Sym(\mathscr S_1),\]
such that some isotopy $(J^s)_{s\in [0,1]}\coloneqq (J^{s,t})_{s,t\in [0,1]}$ of families of $(V,\omega_2,j)$-nearly symmetric almost complex structures verifies the following properties.
\begin{enumerate}[label=(\roman*)]
\setlength\itemsep{-1pt}
    \item Let $\{z_i\}_{i=1}^m$ denote the intersection points of $L_1'$ with $\Phi_{H_\varepsilon''}(L_1')$ and $\mathcal U$ denotes some small neighbourhood of the southern disc bounded by $L_0$ that contains $\Phi_{H_\varepsilon''}(L_0')$. There is some neighbourhood $V'$ of
    \[V \cup \{z_i\}_{i=1}^m \times \Sym^1(\mathcal U)\]
    such that the isotopy $J^0$ is $(V',\omega_2,j)$-nearly symmetric.
    \item Let 
    \[\widehat{x},\widehat{y} \in CF_*(L_0' \cup L_1', H_\varepsilon'',J^{0,t})\]
    be two capped intersection points $x$ and $y$ such that some curve $u \in \mathcal{M}_1(x,y,H_\varepsilon'',J^{0,t})$ intersects the divisor 
    \[\{\tau_2\} \times \Sym^1(\mathscr{S}_2) \subset \Sym^2(\mathscr{S}_2).\]
    Then the intersection points $x$ and $y$ are of the form $[p'_0,r_1']$ and $[q'_0,r_1']$ respectively, for $p'_0$ and $q'_0$ defined as in Section \ref{subsec.setting and notations} and $r_1' \in L'_1 \pitchfork \Phi_{H_\varepsilon''}(L_1')$. Moreover, for two such $\widehat{x}$ and $\widehat{y}$, such a curve $u$ is unique.
    \item Let 
    \[\widehat{x}\in CF_*(L_0' \cup L_1', H_\varepsilon'',J^{0,t}), \quad \widehat{y} \in CF_*(L_0' \cup L_1', G_\varepsilon'',J^{1,t})\]
    be two capped intersection points $x$ and $y$. Then, there is no curve $u \in \mathcal{M}_0(x,y,H^s,J^{s,t})$ of the continuation map that intersects the divisor
        \[\{\tau_2\} \times \Sym^1(\mathscr{S}_2) \subset \Sym^2(\mathscr{S}_2).\]

    \item All holomorphic curves of index 0 and 1 of the complexes
    \[CF_*(L_0' \cup L_1', H_\varepsilon'',J^{0,t}), \quad CF_*(L_0' \cup L_1', G_\varepsilon'',J^{1,t})\]
    are transversally cut-out. Likewise for all curves of index 0 counted in the continuation map \(\Psi_{(H^s,J^s)}\).
\end{enumerate}
\end{lemma}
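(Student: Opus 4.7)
I would construct $j$ and the isotopy $(J^{s,t})$ by a neck-stretching procedure along a small embedded circle $C \subset \mathscr{S}_2$ enclosing $\tau_2$. The circle $C$ is chosen inside the open disc of area $\lambda$ bounded by $L_0'$ that contains $\tau_2$, so that $C$ is disjoint from $L_0'$, $L_1'$, the supports of $H_\varepsilon''$, $G_\varepsilon''$ and of the connecting isotopy, and from $\mathcal N(L_0')$. Pick a $j_R$ on $\mathscr{S}_2$ that makes a long cylinder of length $R \gg 0$ out of a small annular neighbourhood of $C$. Take $V$ to be a neighbourhood of $\Delta$ together with a small neighbourhood of $\{\tau_2\} \times \Sym^1(\mathscr{S}_2)$, and define $J^{s,t}$ as a generic small $(V, \omega_2, j_R)$-nearly symmetric perturbation of $\Sym^2(j_R)$, relative to an isotopy of Hamiltonians from $H_\varepsilon''$ to $G_\varepsilon''$ supported in the image of $\psi$.

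Property (i) is arranged at the endpoint $s=0$: I would simply require $J^{0,t}$ to coincide with $\Sym^2(j_R)$ on an enlarged set $V'$ containing $\{z_i\}_{i=1}^m \times \Sym^1(\mathcal U)$. This enlargement is harmless for transversality because the $\{z_i\} \times \Sym^1(\mathscr{S}_2)$ are complex divisors disjoint from the asymptotic intersection points; as recalled in Section \ref{subsec.some trans results}, the maximum principle applied to the projection onto the first coordinate prevents any non-constant strip from lying entirely in such a divisor. Property (iv) then follows from a further generic perturbation outside $V'$ at $s=0$, and outside $V$ elsewhere, via Proposition \ref{prop.general transversality}.

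For property (ii), apply the tautological correspondence: a $J^{0,t}$-holomorphic strip $u$ meeting $\{\tau_2\} \times \Sym^1(\mathscr{S}_2)$ lifts, because $J^{0,t}$ is symmetric near this divisor, to a $j_R$-holomorphic map $\widehat u \colon \widehat S \to \mathscr{S}_2$ whose image contains $\tau_2$. For $R$ large enough, SFT/Gromov compactness forces the portion of $\widehat u$ mapping into the disc bounded by $C$ to converge to a non-trivial holomorphic disc in the south cap passing through $\tau_2$, with boundary on $L_0'$ (since $\Phi_{H_\varepsilon''}^1(L_0') = L_0'$ away from $\mathcal N(L_0')$). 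The only such holomorphic bigons are the two `big' bigons from $p_0'$ to $q_0'$, which forces one sheet of $u$ to go between $p_0'$ and $q_0'$; the Maslov-index-one constraint forces the other sheet to be a constant strip at some $r_1' \in L_1' \cap \Phi_{H_\varepsilon''}^1(L_1')$, so $x = [p_0', r_1']$ and $y = [q_0', r_1']$. Uniqueness of $u$ in a given class then follows from the uniqueness (up to $\R$-translation) of the big holomorphic bigon through $\tau_2$, essentially by Riemann mapping. For property (iii), the same limit analysis applied to an index-$0$ continuation strip would produce a south-cap disc through $\tau_2$ of positive Maslov index (by monotonicity of $L_0'$) together with a complementary piece of negative Maslov index, which is incompatible with Lagrangian boundary conditions and the monotonicity of $L_0' \cup L_1'$.

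The main obstacle will be carrying out the SFT / Gromov-compactness step rigorously in the nearly-symmetric setting on the singular symmetric product. One must argue that the rescaled limit on the south side of $C$ really is a single honest holomorphic disc with boundary on $L_0'$, with no stray disc or sphere bubbles, and that the matching with the rest of $\widehat u$ across the neck pins down the asymptotics of $u$ exactly as described, both at index $1$ and for the exclusion at index $0$.
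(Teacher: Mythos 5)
There is a genuine gap, and it lies in where you stretch the neck. The paper's proof degenerates $\mathscr{S}_2$ along a circle in the annulus \emph{separating $L_0'$ from $L_1'$}, writing $\mathscr{S}_2 = S_0 \# S_1$ with $L_i' \subset S_i$ and $\tau_2 \in S_0$. The whole point of that choice is that, near the Lagrangian torus, $\Sym^2(\mathscr{S}_2)$ then splits holomorphically as $\Sym^1(S_1)\times\Sym^1(S_0)$ with a product almost complex structure, so every low-index strip whose homotopy class does not cross the neck splits as a product $u_1\times u_0$; additivity of the Maslov index plus transversality forces one factor to be constant, the point constraint at $\tau_2$ forces $u_1$ (the $L_1'$-factor) to be the constant one, and this is exactly what pins down the asymptotics as $[p_0',r_1']\to[q_0',r_1']$ with the \emph{same} $r_1'$ at both ends, gives uniqueness, and (since the continuation isotopy is constant on the $S_0$-side) rules out index-$0$ continuation strips through the divisor. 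Classes crossing the neck are then killed by a Gromov-compactness and virtual-index argument. Your circle $C$ instead encloses only a Lagrangian-free cap around $\tau_2$: both links, all four boundary circles and all the Hamiltonian data stay on one side of $C$, so the degeneration places the Lagrangian torus entirely inside $\Sym^2(\text{main component})$ and produces no product decomposition of the strips. Consequently your appeal to ``one sheet'' versus ``the other sheet'' is unjustified: under the tautological correspondence the double branched cover $\widehat S$ can be a connected annulus (exactly the domains that matter in Lemma \ref{lem.stretch L0 L2}), in which case there are no separate sheets, and nothing in your limit analysis forces the two asymptotics to have equal $L_1'$-coordinate or yields uniqueness.

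A secondary but symptomatic inconsistency: you assert that the SFT limit on the $\tau_2$-side of $C$ is ``a non-trivial holomorphic disc in the south cap passing through $\tau_2$, with boundary on $L_0'$'', yet by your own construction $C$ lies strictly inside the disc bounded by $L_0'$, so the cap contains no Lagrangian at all. The correct limit object there is a plane (or sphere) through $\tau_2$ asymptotic to Reeb orbits on $C$, matched to a punctured strip on the other side; the index bookkeeping for that configuration is genuinely different from the bigon analysis you describe, and it does not by itself exclude a non-product main component. Your treatment of (i) and the transversality statement (iv) via the maximum-principle argument of Section \ref{subsec.some trans results} and Proposition \ref{prop.general transversality} is fine and matches the paper, but (ii) and (iii) need the separating-circle degeneration (or a full combinatorial classification of non-negative Maslov-index-one domains covering the region containing $\tau_2$, which you do not attempt).
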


\begin{proof}[Proof of Lemma \ref{lem.stretch L0' L1'}]

We view $\mathscr{S}_2$ as the connected sum of $S_0$ and $S_1$, two spheres endowed with almost complex structures $j_0$ and $j_1$ respectively. The sphere $S_i$ contains the Lagrangian $L_i$ and we assume that the connected sum is made away from the support of $H''^s$. Let $\sigma_0$ be the north pole of $S_0$ and $\sigma_1$ be the south pole of $S_1$. The connected sum $\mathscr{S}_2$ of $S_0$ and $S_1$ is obtained by removing two standard holomorphic discs of radii $\rho_0$ and $\rho_1$, $B_{\rho_0}(\sigma_0)$ and $B_{\rho_1}(\sigma_1)$ around $\sigma_0$ and $\sigma_1$ and adding a cylinder in between them. Let $j(T)$ denote the complex structure on $\mathscr{S}_2$ obtained by inserting a standard holomorphic cylinder $[-T,T] \times S^1$ between $S_0$ and $S_1$. We also fix a point $\sigma$ in $\mathscr{S}_2$ contained in the image of this cylinder.

In this way, the symmetric product $(\Sym^2(\mathscr{S}_2),\Sym^2(j(T)))$ admits an open subset holomorphically identified with
\[\Sym^1\big(S_1 - B_{\rho_1}(\sigma_1)\big) \times \Sym^1\big(S_0 - B_{\rho_0}(\sigma_0)\big) \]
endowed with the almost complex structure $j_1 \times j_0$. Moreover, we can actually transfer any path $j_1^{s,t}$ of almost complex structure on $S_1$ as long as $j_1^{s,t}$ is standard on $B_{\rho_1}(\sigma_1)$. More precisely, we can obtain a family $J^{s,t}(T)$ of almost complex structures on $\Sym^2(\mathscr{S_2})$ that satisfies the following constraints. Fix another pair of real numbers $R_1 > \rho_1$ and $R_0 > \rho_0$,
\begin{enumerate}[label=$\bullet$]
\setlength\itemsep{-1pt}
    \item over 
    \[\Sym^1\big(S_1-B_{R_1}(\sigma_1)\big) \times \Sym^1\big(S_0-B_{R_0}(\sigma_0)\big),\]
    the path $J^{s,t}(T)$ agrees with $j_1^{s,t}\times j_0$;
    \item over 
    \[\Sym^1\big(S_1-B_{\rho_1}(\sigma_1)\big)\times \Sym^1\big(B_{R_0}(\sigma_0)- B_{\rho_0}(\sigma_0)\big),\]
    as the normal parameter to $\sigma_0$ goes from $R_0$ to $\rho_0$, $J^{s,t}(T)$ splits as a product and connects $j_1^{s,t}\times j_0$ to $j_1\times j_0$;
    \item over the rest, $J^{s,t}(T)=\Sym^2\big(j(T)\big)$.
\end{enumerate}

Since $j_1^{s,t}\equiv j_1$ near $\sigma_1$ the above properties fit together and we point out that for now the property $(i)$ is verified. We want to study the holomorphic curves in the Floer complexes as the neck parameter $T$ goes to $+\infty$. Fix a class $\phi \in \pi_2(x,y)$ and assume that there is a strip $u \in \mathcal{M}_1(x,y, H_\varepsilon'',J^{0,t}(T))$ or $u \in \mathcal{M}_0(x, y, H^s, J^{s,t}(T))$ that represents this class and intersects the divisor $\{\tau_2\} \times \Sym^1(\mathscr{S}_2)$. We distinguish two cases. Either $n_{\sigma}(\phi)=0$ or $n_{\sigma}(\phi)=n > 0$. 

\medskip

\noindent \textbf{Case 1:} If $n_{\sigma}(\phi)=0$ and since the almost complex structures $J^{s,t}(T)$ split outside of the gluing area, we get that for $T$ large enough the curve $u$ splits as the product of two curves $u_1 \times u_0$, where $u_0$ is a $j_0$-holomorphic curve in $S_0-B_{R_0}(\sigma_0)$ and $u_1$ is a $j_1^{s,t}$-holomorphic curve in $S_1-B_{R_1}(\sigma_1)$.

Moreover, if $u \in \mathcal{M}_1(x,y, H_\varepsilon'',J^{0,t}(T))$, then $\mu(u)=\mu(u_0)+\mu(u_1)=1$. From transversality, we have $\mu(u_0), \mu(u_1) \geq 0$ with equality only if the curve is constant. This implies that one of those curve has Maslov index 0, hence it is constant. The curve $u_0$ is not constant, since otherwise $u$ would not intersect 
\[\{\tau_2\} \times \Sym^1(\mathscr{S}_2) \subset \Sym^2(\mathscr{S}_2),\]
a contradiction. Thus, the curve $u_1$ is constant equal to some $r_1'\in L'_1 \pitchfork \Phi_{H_\varepsilon''}(L_1')$ and the curve $u_0$ is the unique holomorphic strip of index 1 between the two points $p'_0$ and $q'_0$ defined as in Section \ref{subsec.setting and notations} passing through the south pole $\tau_2$. This implies that the curve $u$ is of the form $\{r_1'\} \times u_0$, moreover, such a curve is unique if we want to have it with fixed extremities and such that it intersects
\[\{\tau_2\} \times \Sym^1(\mathscr{S}_2) \subset \Sym^2(\mathscr{S}_2).\]
Since this curve can be written as the product of two transverse curves, the curve itself is transverse.

If $u \in \mathcal{M}_0(x, y, H^s, J^{s,t}(T))$, then $\mu(u)=\mu(u_0)+\mu(u_1)=0$, and therefore $\mu(u_0)=\mu(u_1)=0$. But since the isotopy $H^s$ is constant on $S_0-B_{R_0}(\sigma_0)$, the strip $u_0$ must be constant at an intersection point and cannot pass through $\tau_2$. Hence, the case $n_{\sigma}(\phi)=0$ can be ruled out for $u \in \mathcal{M}(x, y, H^s, J^{s,t}(T))$ intersecting $\{\tau_2\} \times \Sym^1(\mathscr{S}_2)$.

\medskip

\noindent \textbf{Case 2:} If $n_{\sigma}(\phi)=n > 0$, we will show by contradiction that if $T$ is sufficiently large, there is no strip $u \in \mathcal{M}(x,y, H_\varepsilon'',J^{0,t}(T))$ of index 0 or 1, or $u \in \mathcal{M}_0(x, y, H^s, J^{s,t}(T))$ that represents $\phi$ and that intersects the divisor $\{\tau_2\} \times \Sym^1(\mathscr{S}_2)$. Let us assume by contradiction that for some sequence $T \to +\infty$ we can find a $J^{s,t}(T)$-holomorphic curve
\[u_T \in \mathcal{M}(x, y, H''_\varepsilon, J^{0,t}(T))\vspace{-0.5mm}\]
of index 0 or 1 or a $J^{s,t}(T)$-holomorphic curve
\[u_T \in \mathcal{M}_0(x, y, H^s, J^{s,t}(T))\vspace{-0.5mm}\]
of the continuation map that represents the class $\phi$. As in \cite{OS} there is a uniform energy estimate on the energy of such curves, and by the Gromov compactness theorem we obtain a bubble tree
\[u_\infty \in \mathcal{M}(\Sym^2(S_0 \vee S_1))\vspace{-0.5mm}\]
consisting of a possibly broken strip, disc bubbles, and sphere bubbles. That is, we can write $u_\infty$ as the product of two holomorphic curves $u_1$ and $u_0$ in $(S_0, j_0)$ an $(S_1, j^{s,t}_1)$ respectively as well as some disc bubbles and sphere bubbles. Assume for now that there is no disc or sphere bubble. Then we have $n_{\sigma_0}(u_0)=n_{\sigma_1}(u_1)$ and from a virtual index computation, see Remark \ref{rk.first vdim computation}, we obtain $\mu(u_\infty) = \mu(u_0)+ \mu(u_1)-2 n_{\sigma_0}(u_0)$. Now in $S_0$ we have a complete understanding of the holomorphic curves of all indices. In particular, for curves passing through $\tau_2$, we have \(\mu(u_0) \geq 1+ 2 n_{\sigma_0}(u_0)\) and in turn
\[\mu(u_\infty)=\mu(u_1)+\mu(u_0)-2n_{\sigma_0}(u_0) \geq \mu(u_1)+1.\]
Since by hypothesis $\mu(u_\infty)$ is 0 or 1 and since $\mu(u_1) \geq 0$ this directly rules out the case $\mu(u_\infty)=0$ and forces $\mu(u_1)=0$ hence the strip $u_1$ is a constant strip and thus $n_{\sigma_1}(u_1)=0$, this is a contradiction with the above assumption. In case of bubbling, we use the fact that adding a sphere or disc bubble to $u_\infty$ would increase its Maslov index by at least $2$, contradicting $\mu(u_\infty)=0$ or $1$. Note that since these curves do not exist, we have automatically transversality for them.

\medskip

We claim that there is only a finite number of homotopy classes $\phi$ we need to consider. Indeed, the classes have Maslov index 0 or 1, and their energy is determined by the end points, therefore the claim follows from Gromov compactness.
Hence, from the case disjunction, we conclude that there exists a real number $T$ and an almost complex structure $J^{s,t}(T)$ such that all the curves in the Floer complex and in the continuation maps are as in $(ii)$ and $(iii)$. Now in order to obtain the transversality as claimed in $(iv)$ and for almost complex structures as in $(i)$, we just need to do a small perturbation of $J^{s,t}$ in the suitable class of almost complex structures. Indeed, in this case, all the curves as in $(ii)$ and $(iii)$ are already transverse hence small perturbations will not change those moduli spaces. 

\medskip

We first show that $J^0=J^{0,t}$ can be chosen as $(V',\omega,j(T))$-nearly symmetric. We proceed as in Section \ref{subsec.some trans results}. For any intersection point $z_i$, any $J^0$-holomorphic curve $u$ of the Floer complex that intersects a small neighbourhood of $\{z_i\} \times \Sym^1(\mathcal U)$ must either leave it or be the product of a $j_0$-holomorphic curve in $\Sym^1(\mathcal U)$ with the constant curve $\{z_i\}$. If it leaves the neighbourhood, then there is no transversality issue, and if it is as in the second case, it is automatically transverse as the product of two transverse curves. The case of the strips of the continuation maps is exactly as in Section \ref{subsec.some trans results}.
\end{proof}

\begin{rk}\label{rk.first vdim computation}We give the details of the index computation in the above proof. Let $\ell \coloneqq n_{\sigma_0}(u_0)=n_{\sigma_1}(u_1)$, and $\mathcal M_\ell$ be the moduli space of tuples $(v_1,v_0,y_1,\ldots,y_\ell)$ where 
\[(v_1,v_0):\R\times [0,1]\to S_1 \times S_0\] 
is $j_1^{s,t}\times j_0$-holomorphic and satisfy the Lagrangian boundary condition, $[v_i]=[u_i]$, and $(y_1,\ldots,y_\ell)$ is a $\ell$-tuple of points of $\R\times (0,1)$. Then, $\mathcal M_\ell$ has virtual dimension 
\[\text{vdim} (\mathcal M_\ell) = \mu(u_1)+\mu(u_0)+2\ell.\]
For $1\leq i\leq \ell$, let 
\begin{align*}
ev_i& :\mathcal M_\ell~\to~ S_1 \times S_0\\
&(v_1,v_0,y_1,\ldots,y_\ell)\mapsto (v_1(y_i), v_0(y_i))
\end{align*}
be the evaluation map at $y_i$. Then, $u_\infty$ determines a unique element 
\[(u_1,u_0,z_1,\ldots,z_\ell)\in \mathcal M\coloneqq \bigcap\limits_{1\leq i \leq \ell}ev_i^{-1}(\{(\sigma_1,\sigma_0)\}),\] 
up to permutation of the $z_i$, where $\{z_1,\ldots,z_\ell\}=u_0^{-1}(\{\sigma_0\})=u_1^{-1}(\{\sigma_1\})$. Since $\{(\sigma_1,\sigma_0)\}\subset S_1\times S_0$ has codimension $4$, we get as stated above 
\[\mu(u_\infty)=vdim (\mathcal M)=vdim (\mathcal M_\ell)-4\ell=\mu(u_1)+\mu(u_0)-2\ell.\]
\end{rk}

We now write and prove the following lemma. Recall that the isomorphism of filtered module $g$ is defined in Proposition \ref{prop.iso of filtered module S1 S2 and S2}.

\begin{lemma}\label{lem.stretch L0 L2}
Let $J^{s,t}$ be the isotopy of family of almost complex structure on $\Sym^2(\mathscr{S}_2)$ obtained from Lemma \ref{lem.stretch L0' L1'} and let $j_1$ be an almost complex structure on $\mathscr{S}_1$. Then there exists a complex structure $j'$ on $(\mathscr{S}_3,\omega_3)$ and a neighbourhood $V$ of the diagonal $\Delta$ such that there is some isotopy $(\widehat{J}^s)_{s \in [0,1]} \coloneqq (\widehat{J}^{s,t})_{s,t \in [0,1]}$ of families of $(V,\omega_3, j')$-nearly symmetric almost complex structure that verifies the following properties.
\begin{enumerate}[label=(\roman*)]
    \item Let
    \[\widehat{y} \in CF_*(L_0' \cup L_1', H_\varepsilon'',J^{s,t})\]
    be a capped intersection that corresponds to the intersection point $y$, and $p_2'$ the representative of the fundamental class of $HF(L_2',H'_\varepsilon)$ as defined in Section \ref{subsec.setting and notations}. Then,
    \[\partial g( \widehat{y}\otimes p_2')=g(\partial\widehat{y}\otimes p_2'+\widehat{y} \otimes \partial p_2'),\]
    where $\partial$ denotes the differential in the complexes $CF_*(L_0 \cup L_1 \cup L_2, H_\varepsilon,\widehat{J}^{0,t})$, $CF_*(L_0' \cup L_1', H_\varepsilon'',J^{0,t})$ and $CF_*(L_2', H_\varepsilon',j_1)$.
    \item Let $\widehat{y}$ be as in $(i)$, then the continuation maps commute with $g$ for $\widehat{y}\otimes p_2'$,
    \[\Psi_{(H^s,\widehat{J}^s)}\big(g(\widehat{y} \otimes p_2')\big)=g\big(\Psi_{(H''^s,J^s)}(\widehat{y}) \otimes \Psi_{(H'^s,j_1)}(p_2')\big).\]
    \item All holomorphic strips of index 0 and 1 of the complexes
    \[CF_*(L_0 \cup L_1\cup L_2, H_\varepsilon,\widehat J^{0,t}), \quad CF_*(L_0 \cup L_1 \cup L_2, G_\varepsilon,\widehat J^{1,t})\]
    are transversally cut-out. Likewise for all holomorphic strips counted in the continuation map \(\Psi_{(H^s,\widehat{J}^s)}\).
\end{enumerate}
\end{lemma}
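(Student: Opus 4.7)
The plan is to perform a neck-stretching of $\mathscr S_3$ along a cylindrical annulus separating the region containing $L_0$ and $L_1$ from the disc bounded by $L_2$, so that in the limit $\mathscr S_3$ degenerates to the wedge $\mathscr S_2\vee\mathscr S_1$ glued at the distinguished points $\tau_2\in\mathscr S_2$ and $\tau_1\in\mathscr S_1$. Concretely, one fixes a complex structure $j'$ on $\mathscr S_3$ which is standard on the inserted cylinder $[-T,T]\times S^1$ and which outside of it pulls back via $\psi$ and $\theta$ to the almost complex structures underlying $J^{s,t}$ on $\mathscr S_2$ and $j_1$ on $\mathscr S_1$. One then builds a family $\widehat J^{s,t}(T)$ of $(V,\omega_3,j')$-nearly symmetric almost complex structures on $\Sym^3(\mathscr S_3)$ which, outside a small neighbourhood of $\Delta$ and of the neck region in $\mathscr S_3^{\,3}$, splits as a product matching $J^{s,t}$ on the $\mathscr S_2$-side and $j_1$ on the $\mathscr S_1$-side.

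To prove (i), fix a $\widehat J^{0,t}(T)$-holomorphic strip $u$ of Maslov index $1$ starting at $g(\widehat y\otimes p_2')$ in a fixed homotopy class $\phi$. By the tautological correspondence, $u$ lifts to a holomorphic map from a $3$-sheeted branched cover $\widehat S\to S$ into $\mathscr S_3$, with three incoming sheets at the two intersection points of $\widehat y$ on the $\mathscr S_2$-side and at $p_2$ on the $\mathscr S_1$-side. Let $n$ be the intersection number of $u$ with the divisor $\{\sigma\}\times\Sym^2(\mathscr S_3)$ for a reference point $\sigma$ inside the neck. If $n=0$, then for $T$ large the splitting of the almost complex structure together with Gromov compactness (exactly as in the proof of Lemma~\ref{lem.stretch L0' L1'}) forces $u$ to decompose as a product of a $J^{0,t}$-holomorphic strip $u_2$ in $\Sym^2(\mathscr S_2)$ starting at $\widehat y$ and a $j_1$-holomorphic strip $u_1$ in $\Sym^1(\mathscr S_1)=\mathscr S_1$ starting at $p_2'$. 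Since $\mu(u_2)+\mu(u_1)=1$ and both indices are non-negative by transversality, exactly one factor is a non-constant index-$1$ strip and the other is the constant strip at the corresponding base point; these are precisely the strips counted on the right-hand side of (i) via the isomorphism $g$.

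If $n>0$, the Gromov-compact limit provides a matched pair in which $u_2$ passes through $\{\tau_2\}\times\Sym^1(\mathscr S_2)$ with multiplicity $n$ and $u_1$ passes through $\tau_1$ with the same multiplicity. Applying the Maslov-index analysis of the proof of Lemma~\ref{lem.stretch L0' L1'}(ii) to $u_2$ on the $\mathscr S_2$-side and the analogous classification of strips through $\tau_1$ on the $\mathscr S_1$-side (where the geometry of $L_2'$ on the sphere is particularly simple), one obtains lower bounds for $\mu(u_2)$ and $\mu(u_1)$ that, combined with the virtual-dimension correction $-2n$ coming from the node (as in Remark~\ref{rk.first vdim computation}), force $\mu(u)\ge 2$, contradicting $\mu(u)=1$; disc and sphere bubbles only strengthen the inequality. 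The proof of (ii) is the same argument applied to continuation strips of Maslov index $0$: the case $n=0$ produces a product of the continuation strips on the two sides, reproducing the right-hand side of (ii), and the case $n>0$ is ruled out by the same index estimate. Property (iii) follows from a small generic perturbation of $\widehat J^{s,t}(T)$ within the class of $(V,\omega_3,j')$-nearly symmetric structures (Proposition~\ref{prop.general transversality}); the product curves identified above are already transverse as products of transverse curves, so such a perturbation does not affect them.

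The main obstacle is the case $n>0$: carefully establishing the Maslov-index bounds on $u_2$ and $u_1$ requires adapting the argument underlying the bound $\mu(u_0)\ge 1+2n_{\sigma_0}(u_0)$ from the proof of Lemma~\ref{lem.stretch L0' L1'} to each side of the wedge, and ruling out all branched-cover configurations in which sheets migrate between $\mathscr S_2$ and $\mathscr S_1$ through the node. It is precisely at this point that the choice of $p_2'$ as the representative of the fundamental class of $HF_*(L_2',H_\varepsilon')$, which keeps the $\mathscr S_1$-side geometry minimal, becomes indispensable for closing the index budget.
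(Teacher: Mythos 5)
Your treatment of the case $n>0$ for the index-$1$ strips of the differential is where the argument breaks down, and it is not a repairable detail but the crux of the lemma. You claim the index budget forces $\mu(u)\ge 2$ and hence rules these strips out; in fact the budget closes exactly at $\mu(u)=1$: the limit configuration consists of the unique index-$1$ strip $u_2$ from $[p_0',r_1']$ to $[q_0',r_1']$ in $\Sym^2(\mathscr S_2)$ passing through $\{\tau_2\}\times\Sym^1(\mathscr S_2)$ (the curve singled out in Lemma \ref{lem.stretch L0' L1'}$(ii)$), matched with a Maslov index $2$ strip $u_1$ from $p_2'$ to itself in $\mathscr S_1$ with $n_{\tau_1}(u_1)=1$, giving $\mu(u_\infty)=\mu(u_2)+\mu(u_1)-2n_{\tau_1}(u_1)=1+2-2=1$. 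These strips exist (their domains are the cylinders of Section \ref{subsubsec.count sphere}) and they \emph{must} exist for identity $(i)$ to hold: the right-hand side $g(\partial\widehat y\otimes p_2')$ contains the contribution of the Lemma \ref{lem.stretch L0' L1'}$(ii)$ curve to $\partial\widehat y$, and since that curve meets the neck, its counterpart on the left-hand side necessarily has $n_\tau>0$. If your index argument were correct, the left-hand side would be missing exactly these terms and $(i)$ would be false.

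Consequently your proposal also omits the second essential step: showing that the \emph{signed} count of the cylindrical-domain strips in $\Sym^3(\mathscr S_3)$ from $[q_0,r_1,p_2']$ to $[p_0,r_1,p_2']$ agrees with the sign of the corresponding curve on the $\mathscr S_2$ side. The paper handles this by noting that the orientation data for cylindrical domains is local, replacing $H_\varepsilon$ by the $C^\infty$-small $G_\varepsilon$ for which all three differentials vanish (Lemma \ref{lem.vanishing of partial}), so that the Künneth identity holds trivially there, and deducing the sign match in general. Your argument for the continuation strips in $(ii)$ (where $n>0$ genuinely is excluded, since an index-$0$ factor $u_2$ with $n_{\tau_2}(u_2)>0$ contradicts Lemma \ref{lem.stretch L0' L1'}$(iii)$) and for the $n=0$ product decompositions is in line with the paper, but as written the proof of $(i)$ does not go through.
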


\begin{proof}[Proof of Lemma \ref{lem.stretch L0 L2}]
The proof is rather long, we have split it into different parts. The parts are separated by titles in bold font.

\bigskip

\noindent \textbf{Set-up of the neck-stretching.} We proceed in a similar manner as in Lemma \ref{lem.stretch L0' L1'}. We construct the almost complex structure $\widehat{J}^{s,t}$ as a sufficiently stretched version of an almost complex structure on $\mathscr{S}_3$ viewed as the connected sum of $\mathscr{S}_2$ and $\mathscr{S}_1$. We will then analyse the strips starting from $g(y \otimes p'_2)$. Let $j$ be the complex structure on $\mathscr{S}_2$ given by Lemma \ref{lem.stretch L0' L1'} and $j_1$ any complex structure on $\mathscr{S}_1$. We denote $\tau_2$ the south pole of $\mathscr{S}_2$ and $\tau_1$ the north pole of $\mathscr{S}_1$. We remove two standard holomorphic discs $B_{\rho_2}(\tau_2)$ and $B_{\rho_1}(\tau_1)$ around $\tau_2$ and $\tau_1$ outside the support of $H''_\varepsilon$ and $H_\varepsilon'$ and add a standard cylinder $[-T,T] \times S^1$. We also fix a point $\tau$ in $\mathscr{S}_3$ contained in the image of this cylinder. We identify the support of $\psi$ in $\mathscr{S}_3$ and its image in $\mathscr{S}_2$ as well as the support of $\theta$ in $\mathscr{S}_3$ with its image in $\mathscr{S}_1$. This construction gives us a complex structure $j'(T)$ on $\mathscr{S}_3$.

\smallskip

An open subset of $(\Sym^3(\mathscr{S}_3),\Sym^3(j'(T)))$ is holomorphically identified with 
\[\Sym^2\big(\mathscr{S}_2-B_{\rho_2}(\tau_2)\big) \times \Sym^1\big(\mathscr{S}_1-B_{\rho_1}(\tau_1)\big)\vspace{-0.5mm}\]
endowed with the almost complex structure $\Sym^2(j)\times \Sym^1(j_1)$. As in Lemma \ref{lem.stretch L0' L1'} we can transfer the path of almost complex structure $J^{s,t}$ on $\Sym^2(\mathscr{S})$ to construct $\widehat{J}^{s,t}$ in the following way. Fix another pair of real numbers $R_2> \rho_2$ and $R_1 > \rho_1$,
\begin{itemize}
\setlength\itemsep{-1pt}
    \item over 
    \[\Sym^2\big(\mathscr{S}_2-B_{R_2}(\tau_2)\big)\times \Sym^1\big(\mathscr{S}_1-B_{R_1}(\tau_1)\big),\]
    $\widehat{J}^{s,t}(T)$ agrees with $(\Sym^2\psi)^*J^{s,t}\times j_1$;
    \item over 
    \[\Sym^2\big(\mathscr{S}_2-B_{\rho_2}(\tau_2)\big)\times \Sym^1\big(B_{R_1}(\tau_1)- B_{\rho_1}(\tau_1)\big),\]
    as the normal parameter to $\tau_1$ goes from $R_1$ to $\rho_1$, $\widehat{J}^{s,t}(T)$ splits as a product and connects $(\Sym^2\psi)^*J^{s,t}\times j_1$ to $\Sym^2(j)\times j_1$;
    \item $\widehat{J}^{s,t}(T)=\Sym^3\big(j'(T)\big)$ everywhere else.
\end{itemize}
Since $J^{s,t}$ is equal to $\Sym^2(j)$ on a neighbourhood of $\Sym^1(\mathscr{S}_2) \times \{\tau_2\}$ all those conditions fit together continuously. Moreover, if we denote $z_1,\ldots ,z_m$ the intersection points between $L_1$ and $\Phi_{H_\varepsilon}^1(L_1)$ and denote $\mathcal{V} \subset \mathscr{S}_3$ the gluing of the open set $\mathcal{U} \subset \mathscr{S}_2$ defined in $(i)$ of Lemma \ref{lem.stretch L0' L1'} with $\mathscr{S}_1$ along the cylinder $[-T,T]\times S^1$. We know from Lemma \ref{lem.stretch L0' L1'} that the almost complex structure $J^{0,t}(T)$ is symmetric on a neighbourhood of $\{z_i\} \times \Sym^1(\mathcal U)$ for all $i=1, \ldots, m$. This means that in the first bullet point on a neighbourhood of $\{z_i\} \times \Sym^2(\mathcal V)$, the almost complex structure satisfies
\[\widehat{J}^{0,t}(T)= (\Sym^2 \psi)^* J^{0,t} \times j_1= \Sym^3(j'(T)).\vspace{-0.5mm}\]
This means that in the second bullet point the interpolation can be chosen to be trivial on a neighbourhood of $\{z_i\} \times \Sym^2(\mathcal V)$. This isotopy $\widehat{J}^{0,t}(T)$ can be therefore chosen to be symmetric on a neighbourhood of $\{z_i\} \times \Sym^2(\mathcal V)$ for all $i =1, \ldots, m$.

\bigskip

\noindent \textbf{Description of what we want to prove.} To prove $(i)$ and $(ii)$, we use again a neck-stretching argument by letting the parameter $T$ go to infinity. After stretching sufficiently the almost complex structure we will manage to count the curves appearing in $(i)$ and $(ii)$ of the statement of the lemma. We describe now the end result of this curve count and how it implies $(i)$ and $(ii)$. The rest of the proof is dedicated to the proof of the announced count. We show in the end of the proof that all curves counted in \vspace{-0.5mm}
\[\Psi_{(H^s,\widehat{J}^s)}(g(\widehat{y}\otimes p_2'))\vspace{-0.5mm}\]
arise as the product of a strip counted in \vspace{-0.5mm}
\[\Psi_{(H''^s,J^s)}(\widehat{y})\vspace{-0.5mm}\]
and the constant strip $p_2$, with a one-to-one correspondence. This yields $(ii)$. Meanwhile, the curves counted in $\partial g( \widehat{y}\otimes p_2')$ arise as the product of a curve counted in $\partial p_2'$ with a constant curve at $\Sym^2(\psi)^{-1}(y)$ or as a product of a constant curve at $p_2$ with a curve counted in $\partial \widehat{y}$ the only exception being all curves that are product of a constant curve at $p_2$ with a curve counted in $\partial \widehat{y}$ of the same form as the curves in $(iii)$ of Lemma \ref{lem.stretch L0' L1'}. These curves correspond (in the sense that the endpoints correspond exactly) to curve with domain a cylinder. We show then that the signed count of those cylinders is the same as the count of such curves. This implies $(i)$.

\bigskip

\noindent \textbf{Start of the case analysis of the curves in the stretched almost complex structure.} We denote $x$ the intersection point associated with the generator $g(y \otimes p_2')$ and let $z$ be a generator of 
\[CF_*(L_0 \cup L_1 \cup L_2, H_\varepsilon'') \text{ or } CF_*(L_0 \cup L_1 \cup L_2, G_\varepsilon).\]
We fix a class $\phi \in \pi_2(x,z)$ and study holomorphic strips $u \in \mathcal{M}(x,z,H_\varepsilon,\widehat{J}^{0,t}(T))$ of index 0 or 1 or $u \in \mathcal{M}(x,z,H^s,\widehat{J}^{s,t}(T))$ of index 0 representing this class as the neck parameter $T$ goes to infinity. Recall that we fixed a point $\tau$ in the inserted neck $[-T,T]\times S^1$. We distinguish two cases: $n_\tau(\phi)=0$ or $n_{\tau}(\phi)=n >0$.

\medskip

\noindent \textbf{Case 1: If $n_{\tau}(\phi)=0$.} 

\noindent Since the almost complex structures split outside of the gluing area, we get that, as $T$ goes to infinity, the strip $u$ can be written as the product of two strips $u = u_2 \times u_1$ with $u_2$ a $J^{s,t}$-holomorphic strip in $\Sym^2\big(\mathscr{S}_2- B_{\rho_2}(\tau_2)\big)$ and $u_1$ a $j_1$-holomorphic strip in $\Sym^1\big(\mathscr{S}_2-B_{\rho_1}(\tau_1)\big)$. Let us first assume that $u$ is a curve in $\mathcal{M}(x,z,H_\varepsilon,\widehat{J}^{0,t}(T))$ of index 1. Since $1=\mu(u)=\mu(u_2)+\mu(u_1)$, at least one of the two maps $u_2$ or $u_1$ is constant and the other is a holomorphic strip such that $n_{\tau_2}(u_2)=0$ and $n_{\tau_1}(u_1)=0$. This implies that there is a one-to-one correspondence between the $\widehat{J}^{s,t}$-holomorphic strips $u$ starting from $x$ such that $n_{\tau}(u)=0$ and the $J^{s,t}$-holomorphic (resp. $j_2$-holomorphic) strips $u_2$ (resp. $u_1$) starting form $y$ (resp. $p_2'$) such that $n_{\tau_2}(u_2)=0$ (resp. $n_{\tau_1}(u_1)=0$). When $u$ is a curve that contributes to the continuation map, it will also split as a curve $u_2 \times u_1$. However, this time, since the isotopy $H^s$ is constant on $L_1$, the strip $u_1$ is constant. In Case 2, we will show that there are no other strips contributing to the continuation map starting from $x$ with $n_\tau(\phi)>0$, so this implies readily that 
\[\Psi_{(H^s,\widehat{J}^s)}\big(g(\widehat{y} \otimes p_2')\big)=g\big(\Psi_{(H''^s,J^s)}(\widehat{y}) \otimes \Psi_{(H'^s,j_1)}(p_2')\big).\]

\medskip

\noindent \textbf{Case 2: If $n_\tau(\phi)=n >0$.} 

\noindent We first assume that $u$ is a strip counted in the continuation map. We want to show that as $T$ goes to $+\infty$ no such strips persist. In fact, let us assume by contradiction that for $T$ going to $+\infty$ we get a strip $u_T \in \mathcal{M}(x,z,H^s,\widehat{J}^{s,t}(T))$ that represents the class $\phi$. As in the proof of Lemma \ref{lem.stretch L0' L1'} we consider $u_\infty$ the Gromov limit of $u_T$. We obtain a bubble tree \(u_\infty \in \mathcal{M}\big(\Sym^3(\mathscr{S}_2 \vee \mathscr{S}_1)\big)\) consisting of possibly broken strips, disc bubbles and sphere bubbles. From the same reasons as in Lemma \ref{lem.stretch L0' L1'} and Remark \ref{rk.first vdim computation} we can rule out any kind of bubbling. Observe that
\[\Sym^3(\mathscr{S}_2 \vee \mathscr{S}_1)=\bigcup_{i=0}^3\Sym^i(\mathscr{S}_2)\times \Sym^{3-i}(\mathscr{S}_1)\]
where the intersections between the different terms correspond to the singular locus. Since $u_\infty$ has boundary in $\Sym^2(\mathscr{S}_2)\times \Sym^1(\mathscr{S}_1)$ and has no nodal point, it has to be contained completely in it. Thus $u_\infty=u_2 \times u_1$ with $u_2$ contained in $\Sym^2(\mathscr{S}_2)$ and $u_1$ in $\Sym^1(\mathscr{S}_1)=\mathscr{S}_1$. By a virtual index computation, we get 
\[0=\mu(u_\infty)\geq \mu(u_2)+ \mu(u_1)-2 n_{\tau_1}(u_1).\]
Note that since $u_1$ is a holomorphic strip in $\mathscr{S}_1$ starting from $p_2'$, we have $\mu(u_1)\geq 2 n_{\tau_1}(u_1)$. Together with the above inequality, this implies that $u_2$ has index 0. However, from Lemma \ref{lem.stretch L0' L1'}, there is no such curve $u_2$ with $n_{\tau_2}(u_2)>0$. This is the contradiction we wished for. This concludes the part of the lemma that concerns the curves in the continuation maps.

\medskip

\noindent We now assume that $u$ is a strip in $\mathcal{M}(x,z,H_\varepsilon,\widehat{J}^{0,t}(T))$. We want to show that any such strip has domain a cylinder in between $L_0$ and $L_2$ as in Section \ref{subsubsec.count sphere} in particular $z$ is always an intersection that corresponds to $g(w \otimes p_2')$ for some intersection point $w$ in $\mathscr{S}_2$. Let us then assume by contradiction that as $T$ goes to $+\infty$, there exists always a curve $u_T \in \mathcal{M}(x,z,H_\varepsilon,\widehat{J}^{0,t}(T))$ whose domain is not a cylinder. Since there are finitely many homotopy classes of strips of index 0 or 1 with fixed end points, up to passing to a subsequence, we can assume that the curves $u_T$ are all in the same homotopy class, which by assumption is different from the two classes corresponding to cylinders described before. We consider a bubble tree $u_\infty \in \Sym^3(\mathscr{S}_2 \vee \mathscr{S}_1)$ a Gromov limit of the sequence $u_T$. Then $u_\infty$ is the product strip $u_2 \times u_1 \in \Sym^2(\mathscr{S}_2) \times \Sym^1(\mathscr{S}_1)$ with some eventual disc and sphere bubbles. From a virtual index computation as in Remark \ref{rk.first vdim computation} we can exclude disc and sphere bubbles and moreover, $u_\infty$ is just the product of the two strips $u_1$ and $u_2$ with
\[\mu(u_\infty)=\mu(u_1) + \mu(u_2)- 2n_{\tau_1}(u_1).\vspace{-0.5mm}\]
The strip $u_1$ is a $j_1$-holomorphic strip in $\mathscr{S}_1$ starting at $p_2'$, therefore we must have $\mu(u_1) \geq 2n_{\tau_1}(u_1)$, which implies that $\mu(u_2)\leq \mu(u_\infty)\leq 1$.

\smallskip

Since $n_{\tau_2}(u_2)>0$, we get from Lemma \ref{lem.stretch L0' L1'} that the $J^{s,t}$-holomorphic strip $u_2$ is a strip from $[p_0',r_1']\coloneqq y$ to $[q_0',r_1']\coloneqq w$ for some $r_1'\in L'_1 \pitchfork \Phi_{H_\varepsilon''}(L_1')$. Moreover, this curve is unique and is the product of a constant curve at $r_1'$ and a holomorphic strip from $p_0'$ to $q_0'$. In particular, $n_{\tau_1}(u_1)=n_{\tau_2}(u_2)=1$, which implies that $u_1$ is a Maslov index two strip from $p_2'$ to itself, i.e. a disc with boundary on $L'_2$ or $\Phi_{H_\varepsilon'}(L'_2)$ with a slit along the other Lagrangian. The domain of $u_\infty$ is thus the domain of a stretched cylinder. This implies that for $T$ large enough, all strips $u \in \mathcal{M}(x,z,H_\varepsilon,\widehat{J}^{0,t}(T))$ of index smaller than 1 and with $n_\tau(u)\neq 0$ have domain a cylinder.

Let $T$ be such real number. Then from the definition of the almost complex structure, $\widehat{J}^{0,t}(T)$ is symmetric on a neighbourhood of $\{z_i\}\times \Sym^2(\mathcal V)$, where $z_i$ is any intersection point of $L_1$ and $\Phi_{H_\varepsilon}(L_1)$. Thus, any curves whose domain is a cylinder are completely contained in $\{z_i\} \times \Sym^2(\mathcal V)$, where $z_i=r_1\coloneqq \psi^{-1}(r_1')$ is the intersection point that corresponds to $r_1'$. This implies that those curves and their orientation behave exactly as in Lemma \ref{lem.count of cylinders as in OS}.

\bigskip

\noindent \textbf{Analysis of the sign of the cylinders obtained in Case 2.} To finish the proof of $(ii)$, we want to show that the (signed) count of the cylinders from $[q_0,r_1,p_2']$ to $[p_0,r_1,p_2']$ is equal to the signed count of the unique curve that is as in $(ii)$ of Lemma \ref{lem.stretch L0' L1'} going from $[q_0',r_1']$ to $[p_0',r_1']$. Note that once 
\[j'(T), \, L_0, \, \Phi^1_{H_\varepsilon}(L_0), \, L_1 \text{ and }\Phi^1_{H_\varepsilon}(L_2)\] 
are fixed, the orientation data of curves of cylindrical domain is fixed as long as $\widehat{J}^{0,t}(T)=\Sym^3(j'(T))$ along those curves. In particular, this allows us to change $\Phi^1_{H_\varepsilon}(L_1)$ replacing it by $\Phi^1_{G_\varepsilon}(L_1)$. In this case, we have simpler complexes
\[CF_*(L_0 \cup L_1 \cup L_2, G_\varepsilon, \widehat{J}^{0,t}), \, CF_*(L_0 \cup L_1, G_\varepsilon'', J^{0,t}) \text{ and } CF_*(L_2, G_\varepsilon', j_1).\] 
Indeed, they have rank $2^3$, $2^2$ and $2^1$ respectively as $\Lambda$-module. Since the associated homology has the same rank over $\Lambda$ as the underlying complex and since $\Lambda$ is an integral domain, the differential of the complexes has to vanish, see Lemma \ref{lem.vanishing of partial}. In particular, $g$ is an isomorphism and
\begin{equation}\label{eq.hard formula in simpler case}
	\partial g(\widehat{y} \otimes p_2')=g(\partial \widehat{y} \otimes p_2' + \widehat{y} \otimes \partial p_2')
\end{equation}
holds automatically. However, as we have seen just before, this identity comes from a one-to-one correspondence between the curves counted on each side of this equality, apart from the count of cylinders and the corresponding curve as in $(iii)$ of Lemma \ref{lem.stretch L0' L1'}. From the correctness of the formula (\ref{eq.hard formula in simpler case}) in this case, the signed count of curves has to be the same. Hence, it is also the same in our more general case.

\bigskip

\noindent \textbf{Proof that one can obtain conclusions $(i)$ and $(ii)$ with transversality.} Now we prove the last point of the lemma. Since our only conclusions concern only some specific curves, we only need to take care of the transversality of any curve as in $(i)$ and $(ii)$. The perturbations needed for any other curve does not change our result. Any curve that is as in Case 1 can be easily dealt with, indeed they are product of holomorphic strips that are already transverse from the definition of $J^{s,t}$ and $j_1$. In the second case, we have curves whose domain are cylinders and also that admit a $2$-fold branched covering by a $j(T)$-holomorphic cylinder in $\mathscr{S}_3$. After some small perturbation of the Lagrangians $L_0$ and $L_1$, the count is exactly $\pm 1$ as previously stated with the correct sign to make the statement of the lemma correct. This finishes the proof of the transversality and thus of the lemma.
\end{proof}

Proposition \ref{prop.maps of fund class is fund class} follows from Lemma \ref{lem.stretch L0 L2}.

\begin{proof}[Proof of Proposition \ref{prop.maps of fund class is fund class}]
From Lemma \ref{lem.stretch L0 L2} $(i)$ and by the linearity of the differential and of the map $g$, for all the chain elements $y \in CF_*(L_0' \cup L_1',H'_\varepsilon)$,
\[\partial g( y\otimes p_2')=g(\partial y\otimes p_2'+y \otimes \partial p_2')=g(\partial y\otimes p_2'),\]
where the last inequality comes from the fact that $\partial p_2'=0$. 
Let $x\in CF_*(L_0' \cup L_1',H'_\varepsilon)$ be a representative of the fundamental class. Since $x$ is a cycle, then $g(x \otimes p_2')$ is also a cycle from the above formula. Moreover, from Lemma \ref{lem.stretch L0 L2} $(ii)$, 
\[\Psi_{(H^s,\widehat{J}^s)}\big(g(x\otimes p_2')\big)=g\big(\Psi_{(H''^s,J^s)}(x) \otimes \Psi_{H'^s,j_2}(p_2')\big).\]
Hence, at the homology level, the continuation map $\Psi_{(H^s,\widehat{J}^s)}$ sends the class $[g(x \otimes p_2')]$ to the fundamental class of
\[HF_*(L_0\cup L_1 \cup L_2, H_\varepsilon).\]
This finishes the proof of Proposition \ref{prop.maps of fund class is fund class}.
\end{proof}

\section{Some consequences and discussions}\label{sec:consequences}

As stated in the Introduction, the main consequence of Theorem \ref{thm.main thm} is Corollary \ref{coro:alternative}, which states that if, for some $\lambda\in[\frac 1 3,\frac 1 2)$, the quasimorphism $r_\lambda$ does not vanish identically, then the Hofer distance on the space of equators of the sphere is unbounded. We give some necessary conditions on a Hamiltonian diffeomorphism $\varphi$ for which one could have $r_\lambda(\varphi)\neq 0$, and deduce some additional consequences of the existence of such a diffeomorphism.

\begin{coro}
For all $\lambda\in[\frac 1 3,\frac 1 2)$, the quasimorphism $r_\lambda$ vanishes on Hamiltonian diffeomorphisms with no topological entropy.
\end{coro}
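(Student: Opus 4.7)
The plan is to extend the argument of Corollary \ref{coro.vanish on aut}: we reduce the vanishing of $r_\lambda$ at a zero-entropy Hamiltonian diffeomorphism to a sublinear growth statement for the Lagrangian Hofer distance along the orbit of the equator, and then invoke this sublinear growth as a consequence of the zero-entropy hypothesis.

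First, I would note that the argument of Proposition \ref{prop:Khanevsky} already yields the following contrapositive-style strengthening of itself: if $\mu$ is a homogeneous Hofer-Lipschitz quasimorphism on $\Ham(S^2)$ that vanishes on $\mathcal{S}(L_0)$, and $\varphi \in \Ham(S^2)$ satisfies $d_H(L_0, \varphi^n(L_0)) = o(n)$, then $\mu(\varphi) = 0$. Indeed, picking for each $n$ a diffeomorphism $\Psi_n \in \Ham(S^2)$ with $\Psi_n(\varphi^n(L_0)) = L_0$ and $\lVert \Psi_n \rVert \leq d_H(L_0, \varphi^n(L_0)) + 1$, the element $\Psi_n \circ \varphi^n$ lies in $\mathcal{S}(L_0)$, so the defect inequality combined with the Hofer-Lipschitz property of $\mu$ (with constant $K$) gives
\[
|n\, \mu(\varphi)| \;=\; |\mu(\varphi^n)| \;\leq\; |\mu(\Psi_n)| + D \;\leq\; K\bigl(d_H(L_0, \varphi^n(L_0)) + 1\bigr) + D.
\]
Dividing by $n$ and letting $n \to \infty$ forces $\mu(\varphi) = 0$. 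By Theorem \ref{thm.main thm}, the quasimorphism $r_\lambda$ is exactly of this type.

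It therefore suffices to establish the following Lagrangian Hofer growth statement: for every $\varphi \in \Ham(S^2)$ with zero topological entropy, one has $d_H(L_0, \varphi^n(L_0)) = o(n)$. This is the main obstacle, and the only step where the zero-entropy hypothesis enters. The expected mechanism is the one underlying the Lagrangian analogues of the barcode-entropy results of \c{C}ineli--Ginzburg--G\"urel: zero topological entropy should force the Floer-theoretic complexity of the pair $(L_0, \varphi^n(L_0))$ (for instance, the number of sufficiently long bars in its Lagrangian persistence module) to grow subexponentially, which in turn controls, via a quantitative comparison, the Lagrangian Hofer distance $d_H(L_0, \varphi^n(L_0))$.

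In the autonomous case of Corollary \ref{coro.vanish on aut}, this sublinearity is upgraded to genuine boundedness through the result of \cite{HumHDR}, which exploits the existence of a smooth isotopy of Lagrangians along which $\varphi^n(L_0)$ travels; no such isotopy is available in the merely zero-entropy case, which is precisely what makes the present corollary strictly stronger and requires a more delicate persistence-theoretic analysis along the orbit of $L_0$ under $\varphi$.
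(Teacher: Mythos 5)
Your overall strategy is exactly the paper's: combine the Khanevsky-type argument (Theorem \ref{thm.main thm} plus Proposition \ref{prop:Khanevsky}) with the statement that zero topological entropy forces sublinear growth of $d_H(L_0,\varphi^n(L_0))$. Your first step --- that sublinear growth of $d_H(L_0,\varphi^n(L_0))$ together with the Hofer-Lipschitz property and vanishing on $\mathcal S(L_0)$ forces $r_\lambda(\varphi)=0$ --- is correct and is precisely how the paper argues (in contrapositive form: $r_\lambda(\varphi)\neq 0$ implies linear growth).

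The gap is in the second step, which you yourself flag as ``the main obstacle'' and then do not close: you assert that zero entropy ``should force'' subexponential Floer-theoretic complexity and hence sublinear Lagrangian Hofer growth, via a barcode-entropy mechanism in the spirit of \c{C}ineli--Ginzburg--G\"urel. As written this is a heuristic, not a proof. The statement you need is in fact an existing result --- the paper simply cites \cite[Proposition 3.3.10]{HumHDR}, which says that $h_{\mathrm{top}}(\varphi)=0$ implies $d_H(L_0,\varphi^n(L_0))=o(n)$ --- so the corollary does not require any new persistence-theoretic analysis. Moreover, the mechanism behind that result is not the one you sketch: it goes through Khanevsky's theorem that linear growth of $d_H(L_0,\varphi^n(L_0))$ forces exponential growth of the number of intersection points $\#(\varphi^n(L_0)\cap L_0)$, combined with Yomdin-type control of intersection growth by topological entropy; it does not pass through barcodes. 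So your proof becomes complete once you replace the speculative paragraph by the citation (or by that intersection-growth argument); as it stands, the only step where the hypothesis $h_{\mathrm{top}}(\varphi)=0$ is actually used is left unestablished.
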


\begin{proof}
    Let $\varphi$ be a Hamiltonian diffeomorphism of $S^2$. If for some $\lambda$, $r_\lambda(\varphi)$ is non-zero, then by Proposition \ref{prop:Khanevsky}, $d(\varphi^n(L_0),L_0)$ grows linearly with $n$. However, by \cite[Proposition 3.3.10]{HumHDR}, if $h_{\text{top}}(\varphi)=0$, then $d(\varphi^n(L_0),L_0)$ grows sub-linearly, and therefore one must have $r_\lambda(\varphi)=0$.
\end{proof}

\begin{rk}\label{rk:vanish_on_autonomous}
    In particular, since autonomous Hamiltonian diffeomorphisms of surfaces have no topological entropy, we recover that $r_\lambda$ vanishes on autonomous Hamiltonian diffeomorphisms of $S^2$, which was already shown in \cite{BFPS} using other methods.
\end{rk}

We can then refine Corollary \ref{coro:alternative}.
\begin{coro}
    At least one of the following is true:
    \begin{itemize}
        \item for all $\lambda\in[\frac 1 3,\frac 1 2)$, $r_\lambda$ vanishes identically on $\Ham(S^2)$;
        \item the Hofer distance on the space of equators of the sphere is unbounded, and there are Hamiltonian diffeomorphisms arbitrarily far away, in the Hofer distance, from the set of autonomous Hamiltonian diffeomorphisms of the sphere.
    \end{itemize}
\end{coro}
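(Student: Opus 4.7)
The plan is to argue contrapositively: assuming that the first alternative fails, so that there exist $\lambda \in [1/3, 1/2)$ and $\varphi \in \Ham(S^2)$ with $r_\lambda(\varphi) \neq 0$, I will produce a single sequence of Hamiltonian diffeomorphisms that simultaneously witnesses both halves of the second alternative. The natural candidate is $(\varphi^n)_{n \geq 1}$, since the homogeneity of $r_\lambda$ makes $|r_\lambda(\varphi^n)| = n|r_\lambda(\varphi)|$ grow linearly. Recall that $r_\lambda$ is Hofer-Lipschitz and vanishes on $\mathcal{S}(L_0)$ by Theorem \ref{thm.main thm}, and, by the corollary preceding this one together with Remark \ref{rk:vanish_on_autonomous}, it also vanishes on every autonomous Hamiltonian diffeomorphism.

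For the first half (unboundedness of the Hofer distance on the space of equators), I would apply Proposition \ref{prop:Khanevsky} directly: the hypotheses that $r_\lambda$ is a non-trivial, homogeneous, Hofer-Lipschitz quasimorphism vanishing on $\mathcal{S}(L_0)$ are exactly those of the proposition, so $d_H(\varphi^n(L_0), L_0)$ grows linearly in $n$, which is the content of the equator conjecture.

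For the second half I would use the same $(\varphi^n)_n$ to bound the Hofer distance to the set $\mathrm{Aut} \subset \Ham(S^2)$ of autonomous Hamiltonian diffeomorphisms from below. Let $C$ be the Hofer-Lipschitz constant of $r_\lambda$. For any $\psi \in \mathrm{Aut}$ we have $r_\lambda(\psi) = 0$, hence
\[
n\,|r_\lambda(\varphi)| \;=\; |r_\lambda(\varphi^n)| \;=\; |r_\lambda(\varphi^n) - r_\lambda(\psi)| \;\leq\; C\, d_H(\varphi^n, \psi).
\]
Taking the infimum over $\psi \in \mathrm{Aut}$ gives $d_H(\varphi^n, \mathrm{Aut}) \geq n|r_\lambda(\varphi)|/C$, which tends to infinity, exhibiting Hamiltonian diffeomorphisms arbitrarily far in the Hofer metric from the set of autonomous ones.

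Honestly, there is no real obstacle here once the earlier results are in place: the statement is essentially a repackaging of Proposition \ref{prop:Khanevsky} and the vanishing-on-autonomous corollary, combined with the elementary observation that a Hofer-Lipschitz quasimorphism vanishing on a subset $A$ automatically bounds the Hofer distance to $A$ from below by $|r_\lambda|/C$. The only small point to be careful about is that one should check $\mathrm{Aut}$ is treated as a subset of $\Ham(S^2)$ (not of the universal cover), which is fine because $r_\lambda$ descends to $\Ham(S^2)$ as recalled after Proposition \ref{prop.properties of link spec inv}.
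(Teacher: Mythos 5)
Your proposal is correct and follows essentially the same route as the paper: argue contrapositively, take the iterates $\varphi^n$ of a $\varphi$ with $r_\lambda(\varphi)\neq 0$, get the equator statement from Proposition \ref{prop:Khanevsky}, and get the distance to $\Aut(S^2)$ from the Hofer--Lipschitz property together with the vanishing of $r_\lambda$ on autonomous diffeomorphisms. The only nitpick is the step $|r_\lambda(\varphi^n)-r_\lambda(\psi)|\leq C\,d_H(\varphi^n,\psi)$: from the one-variable Lipschitz bound $|r_\lambda(\theta)|\leq C\lVert\theta\rVert$ one only gets this up to adding the defect $D$ (the paper instead bounds $|r_\lambda(\varphi^n\psi^{-1})|$ by $C\,d_H(\varphi^n,\psi)$ and then applies the quasimorphism inequality, arriving at $d_H(\varphi^n,\psi)\geq C^{-1}(n|r_\lambda(\varphi)|-D)$), which of course does not affect the linear growth.
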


\begin{proof}
    It is enough to show that if $r_\lambda(\varphi)\neq 0$, then the Hofer distance from $\varphi^n$ to the set $\Aut(S^2)$ of autonomous Hamiltonian diffeomorphisms of the sphere grows linearly in $n$. Using that $r_\lambda$ is a Hofer-Lipschitz and homogeneous quasimorphism that vanishes on $\Aut(S^2)$, we get for $\psi\in\Aut(S^2)$, that
    \begin{align*}
        d(\varphi^n,\psi)&=\Vert \varphi^n\psi^{-1}\Vert \geq K^{-1}\vert r_\lambda(\varphi^n\psi^{-1})\vert\\
        &\geq K^{-1}\left(\vert r_\lambda(\varphi^n)\vert+\vert r_\lambda(\psi^{-1})\vert-D\right)\\
        &\geq K^{-1}\left(n\vert r_\lambda(\varphi)\vert-D\right),
    \end{align*}
    where $K$ is the Lipschitz constant of $r_\lambda$ and $D$ its defect. Taking the infimum over $\psi$, and since $r_\lambda(\varphi)\neq 0$, we get that $d(\varphi^n,\Aut(S^2))$ grows linearly in $n$, which concludes the proof.
\end{proof}

We now discuss what a possible candidate $\varphi$ such that $r_\lambda(\varphi)\neq 0$ would look like. From what was stated before, $\varphi$ would need to have entropy. Moreover, by \cite[Theorem 8]{K09}, the number of intersection points between $\varphi^n(L_0)$ and $L_0$ would have to grow exponentially. Therefore, computing the Floer homology of iterations of $\varphi$ (which one needs to do in principle to compute homogenised spectral invariants) seems to be very difficult. However, recall that the difference between a quasimorphism and its homogenisation is bounded by the defect of the quasimorphism. In our case, using the bounds on the spectral norm from \cite{KS}, we have that the defect of $c_{\underline L'}$ is bounded by $3\lambda$, that of $c_{\underline L}$ by $2\lambda$, and that of  $c_{L_0}$ by $\frac 1 2$. Since those three quasimorphisms are subadditive, we have that the defect of $c_{\underline L'}-c_{\underline L}-c_{L_0}$ can be bounded by $\max\{3\lambda,2\lambda+\frac 1 2\}=2\lambda+\frac 1 2$. Then, it is enough to find a Hamiltonian $H$ such that $\vert(c_{\underline L'}-c_{\underline L}-c_{L_0})(H)\vert>2\lambda+\frac 1 2$ to know that $r_\lambda(\Phi^1_H)\neq 0$. We do expect that the non-homogenised difference of spectral invariants $(c_{\underline L'}-c_{\underline L}-c_{L_0})$ does not vanish identically, however, for all the examples of Hamiltonians for which we conjecture a non-zero value, it would still stay bounded by $\lambda$.

On the other hand, one cannot rely on a filtered (chain) isomorphism as we did in this paper (cf Proposition \ref{prop.iso of filtered module S1 S2 and S2}) to show that $r_\lambda$ vanishes identically, since for Hamiltonian diffeomorphisms creating intersections between some component of the link and the image of a different component, the two complexes would not even have the same number of generators. 

We ask the following related question:

\begin{ques}
    Does there exist a Lagrangian $L$ inside $S^2$, Hamiltonian isotopic to the equator, but that is not the image of the equator by any autonomous Hamiltonian diffeomorphism?
\end{ques}

This question is equivalent to ask if there are Hamiltonian diffeomorphisms of $S^2$ that cannot be decomposed as a composition of an autonomous Hamiltonian diffeomorphism with a Hamiltonian diffeomorphism fixing the equator (as a set). A negative answer would imply that our quasimorphism $r_\lambda$ vanishes identically.

Similar questions can be asked about some fragmentation norms on $\Ham(S^2)$. In what follows, we assume that the area form on $S^2$ is normalised so that its total area is $1$.

\begin{dfn}
For $A\in(0,1)$ and $\varphi\in \Ham(S^2)$, the $A$-fragmentation norm of $\varphi$ is defined as
\[\Vert \varphi\Vert_A=\min\{n,\exists \varphi_1,\ldots,\varphi_n \text{ supported in discs of area}\leq A, \varphi=\varphi_1\circ\ldots\circ\varphi_n\}.\]
\end{dfn}

This norm is well-defined since $\Ham(S^2)$ is simple by Banyaga's theorem \cite{Banyaga}, and the subgroup generated by Hamiltonian diffeomorphisms supported in discs of area smaller or equal to $A$ is normal, therefore any Hamiltonian diffeomorphism of the sphere admits such a decomposition.

In \cite{CGHMSS22}, they show that the $A$-fragmentation norm is unbounded on $\Ham(S^2)$ for any $A\in(0,\frac 1 2)$.

One can also define an autonomous norm for $\varphi\in\Ham(S^2)$:
\[\Vert \varphi\Vert_{\text{Aut}}=\min\{n,\exists \varphi_1,\ldots,\varphi_n\in\Aut(S^2), \varphi=\varphi_1\circ\ldots\circ\varphi_n\}.\]

This norm has been extensively studied on various surfaces \cite{GAMBAUDO_GHYS_2004,BKS,BK}. In particular, it is known to be unbounded on $\Ham(S^2)$ (\cite{GAMBAUDO_GHYS_2004}).
We define the following mixed fragmentation norm, for $\varphi\in\Ham(S^2)$:

\[\Vert \varphi\Vert_{\text{mix}}=\min\{n,\exists \varphi_1,\ldots,\varphi_n \text{ autonomous or supported in a disc of area}\leq \frac 1 2, \varphi=\varphi_1\circ\ldots\circ\varphi_n\}.\]

Note that this norm is smaller or equal to both the autonomous norm and the $\frac 1 2$-fragmentation norm.

Then, we show the following:
\begin{lemma}
    For any $\varphi\in\Ham(S^2(1))$,
\[\vert r_\lambda(\varphi)\vert \leq D\Vert \varphi\Vert_{\text{mix}}\]
where $D$ is the defect of $r_\lambda$. In particular, if $r_\lambda$ does not vanish identically, the mixed fragmentation norm and the $\frac 1 2$-fragmentation norm are unbounded on $\Ham(S^2)$.
\end{lemma}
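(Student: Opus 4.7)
The plan is to reduce the inequality to the statement that $r_\lambda$ vanishes on each of the two elementary types of factor allowed in a mixed-norm decomposition, and then invoke the standard iterated defect estimate. Step one is immediate from Corollary \ref{coro.vanish on aut} (or the stronger Remark \ref{rk:vanish_on_autonomous}): $r_\lambda$ vanishes on $\Aut(S^2)$.

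Step two is to show that $r_\lambda(\varphi)=0$ whenever $\varphi$ is supported in a disc $D\subset S^2$ of area at most $\tfrac12$. Since $D$ has area at most $\tfrac12$, I can enlarge it to a closed disc $D'\supset D$ of area exactly $\tfrac12$, whose boundary $L\coloneqq \partial D'$ is an equator of $S^2$. Because $\supp(\varphi)\subset D\subset \operatorname{int}(D')$, the diffeomorphism $\varphi$ fixes $L$ pointwise and in particular belongs to $\mathcal S(L)$. Theorem \ref{thm.main thm} gives the vanishing of $r_\lambda$ only on the particular stabiliser $\mathcal S(L_0)$, but any equator $L$ is Hamiltonian isotopic to $L_0$, say $L=\psi(L_0)$ for some $\psi\in\Ham(S^2)$, and then $\mathcal S(L)=\psi\,\mathcal S(L_0)\,\psi^{-1}$. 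The key observation is that any homogeneous quasimorphism is conjugation-invariant: for all $g,h\in \Ham(S^2)$,
\[r_\lambda(hgh^{-1})=\lim_{n\to\infty}\frac{r_\lambda(hg^nh^{-1})}{n}=\lim_{n\to\infty}\frac{r_\lambda(g^n)}{n}=r_\lambda(g),\]
where the middle equality uses that $\lvert r_\lambda(hg^nh^{-1})-r_\lambda(g^n)\rvert$ is bounded uniformly in $n$ by a multiple of the defect $D$. Conjugating by $\psi$, the vanishing of $r_\lambda$ on $\mathcal S(L_0)$ transfers to a vanishing on $\mathcal S(L)$, and therefore $r_\lambda(\varphi)=0$.

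For step three, fix a minimal mixed-norm decomposition $\varphi=\varphi_1\circ\cdots\circ\varphi_n$ with $n=\|\varphi\|_{\text{mix}}$, so that each $\varphi_i$ satisfies $r_\lambda(\varphi_i)=0$ by steps one and two. Applying the quasimorphism defect inequality $n-1$ times yields
\[\left\lvert r_\lambda(\varphi)-\sum_{i=1}^n r_\lambda(\varphi_i)\right\rvert\leq (n-1)D,\]
hence $\lvert r_\lambda(\varphi)\rvert\leq (n-1)D\leq D\,\|\varphi\|_{\text{mix}}$. For the concluding assertion, suppose $r_\lambda(\varphi_0)\neq 0$ for some $\varphi_0\in\Ham(S^2)$. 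Homogeneity gives $\lvert r_\lambda(\varphi_0^n)\rvert=n\lvert r_\lambda(\varphi_0)\rvert$, so the inequality just proved forces $\|\varphi_0^n\|_{\text{mix}}\to\infty$; the trivial comparison $\|\cdot\|_{\text{mix}}\leq\|\cdot\|_{1/2}$ then also delivers the unboundedness of the $\tfrac12$-fragmentation norm. The only conceptually substantive step is the conjugation-invariance argument used to pass from $\mathcal S(L_0)$ to $\mathcal S(L)$ for an arbitrary equator $L$; everything else is a routine application of the defect inequality.
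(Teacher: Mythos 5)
Your proposal is correct and follows essentially the same route as the paper: vanishing on autonomous maps via the entropy/Remark \ref{rk:vanish_on_autonomous} argument, vanishing on diffeomorphisms supported in a disc of area at most $\tfrac12$ by conjugating into the stabiliser of the equator and using conjugation-invariance of homogeneous quasimorphisms, and then the standard $(n-1)D$ defect iteration. The only cosmetic difference is that you enlarge the support to a disc of area exactly $\tfrac12$ and conjugate its boundary to $L_0$, whereas the paper conjugates the disc directly into the northern hemisphere; these are the same argument.
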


\begin{proof}
    This follows from the fact that $r_\lambda$ vanishes on Hamiltonian diffeomorphisms that are autonomous or supported in a disc of area $\frac 1 2$. For autonomous Hamiltonian diffeomorphisms, this is the content of Remark \ref{rk:vanish_on_autonomous}. As for Hamiltonian diffeomorphisms supported inside a disc of area $\frac 1 2$, such a diffeomorphism is conjugated to an element of $\mathcal S(L_0)$ (by a Hamiltonian diffeomorphism mapping the disc to the northern hemisphere for instance), on which we know $r_\lambda$ vanishes (recall that $r_\lambda$ is conjugacy invariant since it is a homogeneous quasimorphism).
\end{proof}

\section{Annex}

In this annex, we prove Lemma \ref{lem.vanishing of partial} needed in the proof of Lemma \ref{lem.stretch L0 L2}.

\begin{lemmar}[Vanishing of $\partial$]\label{lem.vanishing of partial}
Let $A$ be an integral domain and $\partial \colon A^n \to A^n$ a homomorphism of $A$-module such that $\partial^2=0$ and $\text{rk}(\Ker(\partial)) =n$, then $\partial =0$.
\end{lemmar}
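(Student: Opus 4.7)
The plan is to reduce the question to linear algebra over a field by passing to the field of fractions. Let $K$ denote the field of fractions of $A$. Since $A$ is an integral domain, $A^n$ is torsion-free, so the canonical map $A^n \hookrightarrow K^n = A^n \otimes_A K$ is injective. The plan is to show that $\partial \otimes_A K$ vanishes, and to conclude by this injectivity.

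First I would apply the exact functor $- \otimes_A K$ (localization is flat) to the map $\partial$ to obtain a $K$-linear endomorphism $\partial_K \colon K^n \to K^n$. By flatness, taking kernels commutes with the base change, so
\[\Ker(\partial_K) = \Ker(\partial) \otimes_A K.\]
By the very definition of rank for a module over an integral domain, the right-hand side is a $K$-vector space of dimension $\mathrm{rk}(\Ker(\partial)) = n$. Since $\Ker(\partial_K)$ is a subspace of $K^n$ of the same dimension as $K^n$, we must have $\Ker(\partial_K) = K^n$, i.e.\ $\partial_K = 0$.

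Finally, given any $x \in A^n$, the element $\partial(x) \in A^n$ maps to $\partial_K(x) = 0$ under the injection $A^n \hookrightarrow K^n$. Injectivity then forces $\partial(x) = 0$, and since $x$ was arbitrary, $\partial = 0$.

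There is no real obstacle here; the only subtlety worth pointing out is that the hypothesis $\partial^2 = 0$ is in fact not needed for the conclusion, the rank condition alone being sufficient. An alternative, essentially equivalent, formulation avoids localization: the image $\mathrm{Im}(\partial) \subseteq A^n$ is torsion-free of rank $n - \mathrm{rk}(\Ker(\partial)) = 0$, and a torsion-free module of rank zero is the zero module.
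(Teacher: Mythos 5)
Your proof is correct and follows essentially the same route as the paper: base change to the field of fractions $Q(A)$, identify the rank of the kernel with the dimension of the kernel of $\partial\otimes_A Q(A)$, and conclude via the injectivity of $A^n\hookrightarrow Q(A)^n$ (the paper phrases this as a proof by contradiction, but the content is identical). Your observation that the hypothesis $\partial^2=0$ is superfluous is also accurate.
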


\begin{proof}
Let us assume that $\partial \neq 0$ for the sake of contradiction. As $A$ is integral, we can define $Q(A)$ the field of fraction of $A$, then
\[\tilde{\partial} \coloneqq \partial \otimes_A Q(A) \colon A^n \otimes_A Q(A) \to A^n \otimes_A Q(A)\]
is a linear map of vector spaces over the field $Q(A)$, $\tilde{\partial}^2=0$ and 
\[\text{rk}(\Ker(\partial))= \text{dim}(\Ker(\partial\otimes_A Q(A)))=\dim(\Ker(\tilde{\partial}))\leq n-1.\]
Otherwise, $\tilde{\partial}$ would be identically vanishing, this would imply the same for $\partial$.
\end{proof}

\bibliographystyle{alpha}
\bibliography{biblio}

\end{document}